\documentclass[a4paper,11pt,oneside]{article}
\usepackage[utf8]{inputenc}
\usepackage[T1]{fontenc}
\usepackage[english]{babel}
\usepackage{amsmath,amssymb,amsthm,mathtools,mathrsfs}
\usepackage{xcolor}
\usepackage{scrextend}
\usepackage{geometry}
\usepackage{enumitem}
\usepackage{hyperref}
\usepackage{comment}
\hypersetup{colorlinks=true,linkcolor=blue,citecolor=blue,urlcolor=blue}
\newtheorem{thm}{Theorem}[section]
\newtheorem{prop}[thm]{Proposition}
\newtheorem{lemma}[thm]{Lemma}
\newtheorem{coroll}[thm]{Corollary}
\newtheorem{remark}[thm]{Remark}
\newtheorem{defin}[thm]{Definition}
\newcommand{\OK}{\mathcal O_K}
\newcommand{\Fq}{\mathbb F_q}
\newcommand{\Q}{\mathbb Q}
\newcommand{\R}{\mathbb R}
\newcommand{\C}{\mathbb C}
\newcommand{\Z}{\mathbb Z}
\newcommand{\A}{\mathbb A}

\newcommand{\disc}{\operatorname{disc}}

\newcommand{\Norm}{\operatorname N}

\newcommand{\red}[1]{#1}
\title{}
\author{}
\date{}
\begin{document}

\begin{center}
\textbf{GALOIS GROUPS OF POLYNOMIALS OVER A FIXED NUMBER FIELD:\\
DETERMINANT METHODS IN GENERAL BOXES, RESOLVENTS, AND QUANTITATIVE COUNTING}
\end{center}
\begin{center}Ilaria Viglino\end{center}

\begin{center}\textbf{Abstract}\end{center}
\begin{addmargin}[2em]{2em}
Let $K$ be a fixed number field of degree $d$.  We prove two determinant-method results for integral points over $\mathcal O_K$
in boxes whose coordinates may have different size bounds: a Bombieri--Pila type
estimate for affine plane curves that retains the two separate coordinate bounds\cite{BombieriPila,BrowningHB},
and a Salberger--Browning covering theorem for affine surfaces\cite{Browning,Salberger}.  If $F\in\mathcal O_K[X_1,X_2]$ is
irreducible and
$T=\max B_1^{e_1}B_2^{e_2}$ over the monomials occurring in $F$, then
\[
 N_K(F;B_1,B_2)\ll T^\varepsilon
 \exp\!\left(d\frac{\log B_1\log B_2}{\log T}\right).
\]
For an absolutely irreducible surface in three variables we obtain a covering
by $O(T^\varepsilon V_{3,K})$ bounded-degree auxiliary hypersurfaces, outside
$O(T^\varepsilon V_{3,K}^2)$ exceptional integral points, where
\[
 V_{3,K}=\exp\!\left(d
 \left(\frac{\log B_1\log B_2\log B_3}{\log T}\right)^{1/2}\right).
\]
These are the number-field analogues of determinant-method estimates for unequal coordinate ranges developed in the rational setting by
Browning and Heath-Brown\cite{BrowningHB}, Salberger\cite{Salberger}, Browning\cite{Browning}, and Chow--Dietmann\cite{ChowDietmannVDW}.  We then apply them to
Galois resolvents of monic degree-$n$ polynomials over $K$.  Independently, a
large-sieve argument, in the tradition of Gallagher's probabilistic Galois theory\cite{Gallagher}, gives
$O_{K,n}(N^{dn-d/2}\log N)$ exceptional separable polynomials, while effective
Hilbert irreducibility over global fields\cite{ParedesSasykHIT} gives subgroup-sensitive bounds with exponent
$d(n-1+1/[S_n:G])$, up to logarithms.  We record the interaction with the
discriminant-square equation for even groups, congruence restrictions, and
rectangular coefficient boxes.  All determinant estimates used in the
applications are proved in the paper; no conjectural global-field determinant
statement is assumed.
\end{addmargin}

\section{Introduction}
Fix a number field $K/\Q$ of degree $d$ and an integral basis $\omega_1,\ldots,\omega_d$ of $\OK$.  For
\[
 \alpha=\sum_{j=1}^d a_j\omega_j\qquad(a_j\in\Z)
\]
put $H_{\mathcal B}(\alpha)=\max_j|a_j|$.  For $n\ge2$ and $N\ge1$, let
\[
 \mathscr P_{n,N}(K)=\left\{X^n+\alpha_{n-1}X^{n-1}+\cdots+\alpha_0:
 \alpha_i\in\OK,\ H_{\mathcal B}(\alpha_i)\le N\right\}.
\]
Thus $|\mathscr P_{n,N}(K)|=(2N+1)^{dn}$.  Write $G_f$ for the Galois group over $K$ of the splitting field of $f$, acting on the roots, and let
\[
 \mathscr P^0_{n,N}(K)=\{f\in\mathscr P_{n,N}(K):f\text{ separable and }G_f\cong S_n\}.
\]

The rational van der Waerden conjecture predicts, and Bhargava proved in full generality\cite{Bhargava}, that the complement of the $S_n$-polynomials in a rational coefficient box has codimension one.  Over a fixed number field the geometry of the box has dimension $dn$.  Our first contribution is instead analytic-geometric: we establish determinant estimates over $\mathcal O_K$ for boxes with different side lengths for curves and surfaces, with the precise exponents that record the different side lengths needed in resolvent counting.  We then combine these estimates with the large-sieve approach to probabilistic Galois theory\cite{Gallagher,Cohen} and effective Hilbert irreducibility over global fields\cite{ParedesSasykHIT}.

\subsection{Main results and contributions}

The principal new analytic contribution of this paper is a pair of integral-point estimates for boxes with different side lengths over a fixed number field.  They are the precise
number-field analogues of the two determinant-method inputs that underlie the
rational arguments of Browning--Heath-Brown\cite{BrowningHB}, Browning--Salberger\cite{Browning,Salberger}, and
Chow--Dietmann\cite{ChowDietmannVDW}.  To state them, for
\[
 F=\sum_{\mathbf e}c_{\mathbf e}X_1^{e_1}\cdots X_r^{e_r}\in
 \mathcal O_K[X_1,\ldots,X_r],\qquad B_i\ge2,
\]
write
\[
 T_F(\mathbf B)=\max_{c_{\mathbf e}\ne0}\prod_i B_i^{e_i}.
\]
We emphasize the elementary meaning of the boxes considered here.  A ``box
with different side lengths'' simply means that the coordinates are bounded
separately,
\[
 H_\infty(x_1)\le B_1,\quad \ldots,\quad H_\infty(x_r)\le B_r,
\]
with no assumption that the numbers $B_i$ are comparable.  The logarithms
$w_i=\log B_i$ are used as weights in the determinant argument.  Thus the
word ``weighted'' below refers only to keeping track of these different
coordinate bounds; it does not introduce an additional geometric hypothesis.

\begin{thm}[Curve estimate in a box with different side lengths; Theorem~\ref{thm:lopcurve} below]\label{thm:introcurve}
Let $F\in\mathcal O_K[X_1,X_2]$ be irreducible over $K$ of total degree at
most $D$.  Then, for every $\varepsilon>0$,
\[
 \#\{(x_1,x_2)\in\mathcal O_K^2:F(x_1,x_2)=0,
 H_\infty(x_i)\le B_i\}
 \ll_{K,D,\varepsilon}
 T_F(B_1,B_2)^\varepsilon
 \exp\!\left(d\frac{\log B_1\log B_2}{\log T_F(B_1,B_2)}\right).
\]
\end{thm}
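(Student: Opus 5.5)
The plan is to reduce to a rational determinant-method problem by restriction of scalars, and then run the Bombieri--Pila/Heath-Brown argument with separate weights $w_1=\log B_1$, $w_2=\log B_2$. Concretely, I will follow Heath-Brown's $p$-adic version (as in Browning--Heath-Brown \cite{BrowningHB}) but over $\mathcal O_K$. First I reduce to $F$ absolutely irreducible. If $F$ is irreducible over $K$ but not absolutely irreducible, then its integral zeros lie on the intersection of two distinct conjugate components. That intersection is finite, of size $O_D(1)$ by B\'ezout, which is acceptable. I may also assume $F$ is primitive, and that $F$ genuinely involves both variables; otherwise $F$ is linear in effect and the count is trivially $\ll B_i^{d}\le$ the stated bound, which needs a check. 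In this setting $T=T_F(B_1,B_2)$ satisfies $T\ge B_1$ or $T\ge B_2$.

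The main step is the auxiliary-polynomial construction. I choose a prime ideal $\mathfrak p$ of degree one with norm $p$, of suitable size, where $F$ has good reduction and its reduction modulo $\mathfrak p$ is absolutely irreducible. Such primes exist with $p\ll$ a power of $\log\|F\|$ by effective Noether/Ostrowski and Chebotarev-type density. Then I fix a nonsingular $\mathbb F_p$-point $P$ of $F \bmod \mathfrak p$ and study the zeros $\mathbf x$ in the box reducing to $P$. I take the monomials $X_1^{a}X_2^{b}$ with $a w_1+b w_2\le Y$, modulo those divisible by the leading monomial of $F$ (those attaining $T$). Writing each $x_i=\sum_j x_{ij}\omega_j$, I form the matrix whose rows are indexed by $s$ points and whose columns are indexed by the monomials and the embeddings. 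More precisely, I take the $ds$-by-$ds$ real matrix obtained by applying the $d$ embeddings to the values of the $s$ monomials at the points.

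I will bound its determinant two ways. The upper bound via Hadamard is $\prod_{\text{monomials}}(\text{size})^{d}$, i.e.\ $\exp(d\sum (a w_1+bw_2))$ up to constants. The lower bound comes from $\mathfrak p$-adic vanishing: since the points lie on a smooth branch through $P$, a local parametrization gives divisibility of the determinant by $p^{d\,s^2/(2)\,(1+o(1))}$ after norm considerations. The determinant is a nonzero norm, so it is at least $1$ in absolute value times the $p$-power. Comparing the two bounds forces vanishing when $p$ exceeds a threshold. The lattice-point count gives $\sum(aw_1+bw_2)\approx s\,Y\cdot\frac23$ and $s\approx Y\log T/(w_1w_2)$ roughly, because the strip excluded by the leading monomial has width $\log T$ in weighted degree. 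The upshot is that all points mod $\mathfrak p$ near $P$ lie on an auxiliary curve $G=0$ not divisible by $F$, of weighted degree $\ll w_1w_2/\log T$. Summing over the $\ll p$ points $P$ gives $\ll p$ auxiliary curves. Then B\'ezout, in its weighted form via counting $F$ against $G$ through a Newton-polygon mixed volume, gives at most $\deg F\cdot\deg G$ common zeros per curve. Optimizing $p\approx \exp(d\, w_1w_2/\log T)$ yields the stated bound, with the $T^\varepsilon$ absorbing $Y$-degree losses and choosing $\deg G$ as a large constant times $1/\varepsilon$.

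I expect the main obstacle to be the bookkeeping in the determinant comparison. The monomial set must be chosen so that the weighted sums produce exactly $\exp(d\,w_1w_2/\log T)$ and not something depending on the total degree. The Newton polygon of $F$ may be arbitrary, so the ``missing strip'' argument must use a monomial of $F$ attaining $T$ rather than a leading term in total degree, as in Browning--Heath-Brown's weighted version. My first attempt is to use the vertex monomial $X_1^{e_1}X_2^{e_2}$ achieving $T$ as the pivot for reduction modulo $F$, with a weighted monomial order. A second subtle point is the $\mathfrak p$-adic divisibility over $K$. Splitting into $d$ real coordinates loses nothing only because $\mathfrak p$ has degree one, so I must ensure a degree-one $\mathfrak p$ of the right size exists, which holds with density $\gg1$ by Chebotarev.
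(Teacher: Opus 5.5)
Your route is the paper's. You reduce to absolute irreducibility (Lemma~\ref{lem:georedcurve}), take one good prime of norm about $Q=\exp(d\,w_1w_2/\log T)$, and use the standard monomials modulo the monomial of $F$ attaining $T$ in a weight-refining order. You then compare the field norm of the interpolation determinant with its $\mathfrak p$-adic divisibility; your $ds\times ds$ restriction-of-scalars matrix has determinant $\pm N_{K/\Q}(\Delta)$, up to a power of the discriminant. One auxiliary curve per nonsingular residue class and B\'ezout finish the argument.

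However, the comparison itself, which you rightly call the crux, does not close with the numbers you wrote.

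First, $s$ points congruent to a smooth point modulo a single prime $\mathfrak p$ give $\Delta\in\mathfrak p^{s(s-1)/2}$. Hence one has only $(N\mathfrak p)^{s(s-1)/2}\mid N_{K/\Q}(\Delta)$, i.e.\ $p^{s(s-1)/2}$ for a degree-one prime, not $p^{ds^2/2}$. The points carry no congruence information at the other primes above $p$, and the factor $d$ enters only on the archimedean side. With your exponent the comparison would give a threshold, and hence a final bound, with no factor $d$. That is false for $d\ge2$: for $X_2=X_1^r$, $B_2=B_1^r$ there are about $B_1^d$ points. For the same reason the residue degree of $\mathfrak p$ plays no role. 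There are $O_D(N\mathfrak p)$ classes and the divisibility is in powers of $N\mathfrak p$, as in Lemma~\ref{lem:normdet} and \eqref{eq:lowercurve}, so no Chebotarev input is needed.

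Second, the standard monomials of weight $\le Y$ have asymptotically constant density $t/(w_1w_2)$ in the weight variable. Their average weight is therefore $Y/2$, not $2Y/3$ (the constant for a full triangle):
\[
\sum_{j\le s}\lambda_j=\frac{w_1w_2}{2t}s^2+O_D(st),
\]
which is Lemma~\ref{lem:weightedcurve} with the error made explicit. With $\frac23$ you would only get vanishing for $\log N\mathfrak p>\frac43\,d\,w_1w_2/t$, i.e.\ the bound $Q^{4/3}$. Only the pairing $(N\mathfrak p)^{s(s-1)/2}$ against $\exp\bigl(d\frac{w_1w_2}{2t}s^2+O(st+s\log s)\bigr)$ forces vanishing once $N\mathfrak p\gg Q\,T^{C/s}$. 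This matches the $O_D(N\mathfrak p)$ residue classes and gives $T^\varepsilon Q$ with $s\asymp 1/\varepsilon$.

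Two further steps that the paper's proof includes are missing.

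(i) Uniformity. You need a good prime with $Q\le N\mathfrak p\ll T^{\varepsilon}Q$, not merely some small good prime. The bad primes (those dividing the Noether form and the other degeneracy conditions) can number about $\log\|F\|$, so this requires $\log\|F\|\ll_{K,D}\log T$. Your ``$p\ll$ a power of $\log\|F\|$'' leaves a bound depending on the coefficients. The paper supplies the height bound via the coefficient-height dichotomy (Lemma~\ref{lem:coeffdichotomy}): either the height is so bounded, or all points in the box lie on a second bounded-degree curve coprime to $F$, and B\'ezout finishes.

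(ii) Singular residue classes. You sum only over nonsingular points $P$ of $F\bmod\mathfrak p$. Points reducing to one of its $O_D(1)$ singular points are never counted, and there the local divisibility is weaker. These classes must be treated separately. One standard way is to choose $\mathfrak p$ among the many good primes of norm in a dyadic range $[P,2P]$ so that at most half of the points reduce to singular points. This works because a point $\mathbf x$ nonsingular on $F=0$ reduces to a singular point only modulo primes dividing the nonzero ideal $(\partial_1F(\mathbf x),\partial_2F(\mathbf x))$. That ideal has norm $T^{O(1)}$ once the height is controlled. The $O(D^2)$ singular points of $F=0$ itself are harmless.
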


\begin{thm}[Surface covering in a box with different side lengths; Theorem~\ref{thm:lopsurface} below]\label{thm:introsurface}
Let $F\in\mathcal O_K[X_1,X_2,X_3]$ define an absolutely irreducible affine
surface of degree at most $D$, put $T=T_F(B_1,B_2,B_3)$, and set
\[
 V_{3,K}=\exp\!\left(d
 \left(\frac{\log B_1\log B_2\log B_3}{\log T}\right)^{1/2}\right).
\]
Then all integral points of $F=0$ in the box $H_\infty(x_i)\le B_i$, except
for a set of size $O_{K,D,\varepsilon}(T^\varepsilon V_{3,K}^2)$, lie on
$O_{K,D,\varepsilon}(T^\varepsilon V_{3,K})$ hypersurfaces of bounded degree,
none containing $F=0$.
\end{thm}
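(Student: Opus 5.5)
The plan is to run the Bombieri--Pila--Salberger determinant method after restricting scalars from $\mathcal O_K$ to $\Z$. Via the integral basis $\omega_1,\ldots,\omega_d$, a point $x_i\in\OK$ with $H_\infty(x_i)\le B_i$ becomes $d$ integer coordinates of size $\ll B_i$. The point count therefore lives in a box of volume $\asymp (B_1B_2B_3)^d$. This is the source of the factor $d$ in the exponent of $V_{3,K}$.

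\textbf{Step 1: local step.} First reduce to points lying in a fixed residue class modulo a prime $\mathfrak p$ of $\OK$ of good reduction, with $\Norm\mathfrak p$ large. Among such points, fix a large parameter $k$. Choose the monomials $X^{\mathbf e}$ of weighted degree $\sum e_iw_i\le k$, where $w_i=\log B_i$ is taken modulo the ideal $(F)$. Then form the determinant $\Delta$ of these monomials evaluated at any $s$ of the points, together with their $d$ conjugate embeddings; the natural object is the norm of $\Delta$.

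The Archimedean size of $\Delta$ is bounded by $\prod_{\text{monomials}}\prod_i B_i^{de_i}$.

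The $\mathfrak p$-adic divisibility of $\Delta$ comes from the Salberger/Heath-Brown local argument on the smooth surface over $\Fq$. Points in one residue class lie on a formal $2$-dimensional disc, so $\Delta$ is divisible by $\mathfrak p^{c\,s^{3/2}}$ with the usual constant.

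\textbf{Step 2: global step.} Compare the two estimates, with $\Norm\mathfrak p$ chosen of size about $V_{3,K}$ up to $T^\varepsilon$. A nonzero algebraic integer has norm at least $1$, so if the Archimedean bound is smaller than $\Norm\mathfrak p^{\text{power}}$, then $\Delta=0$. Hence all points in the residue class lie on one auxiliary hypersurface $G=0$ of degree $\ll_{D,\varepsilon}1$, not divisible by $F$.

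The weighted counting needed here is the heart of the argument. The number of monomials of weighted degree $\le k$ modulo $F$ grows like $k^2/(2w_1w_2w_3/\log T)$ times a constant. This is because the Hilbert function of the surface in weighted grading has leading coefficient proportional to $\log T/(w_1w_2w_3)$, where $\log T$ is the maximal weighted degree of a monomial of $F$. This is the step producing $(\log B_1\log B_2\log B_3/\log T)^{1/2}$.

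The number of residue classes is $\asymp(\Norm\mathfrak p)^2\approx V_{3,K}$ after optimizing, with $\Norm\mathfrak p\approx V_{3,K}^{1/2}$. This yields $O(T^\varepsilon V_{3,K})$ hypersurfaces. Primes of the required norm in $\OK$ exist by the prime ideal theorem, uniformly up to constants depending on $K$.

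\textbf{Step 3: exceptional points.} Exceptional points arise from residue classes where the reduction is singular, and from bad primes dividing the content or discriminant. Following Salberger and Browning, handle singular residue classes by using several primes: choose the auxiliary prime among $\gg\log T$ candidates so that it avoids the bad reduction. The remaining points that reduce to singular points of $F$ mod $\mathfrak p$ lie on the singular locus, which is a curve. Counting them with the curve estimate, Theorem~\ref{thm:introcurve}, applied to the components of the singular locus (intersections with $\partial F/\partial X_i$), gives $O(T^\varepsilon V_{3,K}^2)$. Bézout bounds the degree and the number of such components.

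\textbf{Main obstacle.} The hardest step is the weighted Hilbert-function estimate. One must count monomials of weighted degree $\le k$ not in the initial ideal of $(F)$ and control the sum of their exponents, with error terms uniform in the ratios $w_i/w_j$, including degenerate cases where some $B_i$ is tiny. I would try first to choose a monomial order refining the weighted degree and work with the leading term of $F$, which has weight exactly $\log T$. This reduces the count to lattice points in a simplex slab of width $\log T$, and there the counts are explicit.
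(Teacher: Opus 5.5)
There is a genuine gap in Step~2: a one-prime residue-class argument cannot give $O(T^\varepsilon V_{3,K})$ hypersurfaces.

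Write $W=(\log B_1\log B_2\log B_3/\log T)^{1/2}$. With your own normalisations, the local exponent is $\frac{2^{3/2}}{3}R^{3/2}\log\Norm\mathfrak p$. The archimedean bound is $d\,\frac{2^{3/2}}{3}WR^{3/2}+O(R\log R)$. The leading constants $2^{3/2}/3$ coincide, so $\Delta$ is forced to vanish only when $\log\Norm\mathfrak p>dW$, i.e.\ $\Norm\mathfrak p\gg V_{3,K}$ up to $T^{\varepsilon}$. This is what you say first, but it contradicts your later choice $\Norm\mathfrak p\approx V_{3,K}^{1/2}$. There is nothing to optimise: with $\Norm\mathfrak p\approx V_{3,K}^{1/2}$ the local divisibility is half of what is needed, and no determinant vanishes.

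With $\Norm\mathfrak p\approx V_{3,K}$, Lang--Weil gives $\asymp(\Norm\mathfrak p)^2\approx V_{3,K}^2$ smooth residue classes. So your argument, which is Heath-Brown's $p$-adic method, yields $O(T^\varepsilon V_{3,K}^2)$ hypersurfaces and no exceptional set. That is a different statement, with the square in the wrong place. The paper flags exactly this loss in Remark~\ref{rem:globalessential}.

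The missing idea is Salberger's global determinant method (Proposition~\ref{prop:globalcover} and Appendix~\ref{app:weightedglobal}). There the divisibility of the interpolation determinant is accumulated over a set $\mathcal P$ of prime ideals, with stopping rule $\sum_{\mathfrak p\in\mathcal P}\log\Norm\mathfrak p>dW$. This lets one auxiliary hypersurface handle many residue classes at once, which is what brings $J$ down to $T^\varepsilon V_{3,K}$. The exceptional set $Z$ of size $T^\varepsilon V_{3,K}^2$ is the price paid at the second stopping stage of that construction.

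Step~3 is also incorrect as stated. A point $\mathbf x\in X(\OK)$ reducing to a singular point of $X_{\mathfrak p}$ need not lie on the singular locus of $X$; it only satisfies $\nabla F(\mathbf x)\equiv0\pmod{\mathfrak p}$. In the one-prime method such points are handled by switching primes. After the coefficient-height reduction, a nonzero $\nabla F(\mathbf x)$ generates an ideal of norm $T^{O(1)}$, so it is divisible by only $O(\log T)$ of the candidate primes. Points on $\mathrm{Sing}(X)$ all lie on the single extra hypersurface $\partial F/\partial X_i=0$, where $F\nmid\partial F/\partial X_i$ for degree reasons. No curve counting is needed, and the plane-curve theorem would not apply directly to space-curve components anyway.

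So in your approach the exceptional set is essentially empty. The $V_{3,K}^2$ in the theorem does not come from singular reductions but from the global construction you have not supplied. Your weighted Hilbert-function count is fine and matches Lemma~\ref{lem:weightedsurface}; the difficulty is not there but in the covering step.
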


The proofs occupy Section~\ref{sec:determinant} and
Appendix~\ref{app:weightedglobal}.  The essential new bookkeeping keeps track of the separate coordinate bounds:
the archimedean determinant is measured by the Newton support of $F$, while
finite-place divisibility is measured by ideal norms.  Taking the field norm
of the interpolation determinant is exactly what inserts the factor
$d=[K:\mathbb Q]$ in the exponential scales above.

\subsection{Proof strategy for the two determinant theorems}

Because Theorems~\ref{thm:introcurve} and~\ref{thm:introsurface} are the
main new analytic statements, we indicate at the outset where their proofs
require more than a formal replacement of $\mathbb Z$ by $\mathcal O_K$.
First, interpolation determinants are algebraic integers.  Their archimedean
size is estimated simultaneously at all embeddings of $K$, whereas their
finite-place divisibility is expressed by prime ideals; the comparison is made
by the absolute field norm $|N_{K/\mathbb Q}(\Delta)|$.  This is the source
of the factor $d=[K:\mathbb Q]$ in the final exponents.

Second, the different coordinate bounds are retained through a weighted
monomial order.  Writing $w_i=\log B_i$ and $t=\log T_F(\mathbf B)$, the
count and total weight of standard monomials modulo the leading monomial of
$F$ give the quantities
\[
 \exp\!\left(d\frac{w_1w_2}{t}\right)
 \quad\text{for curves},\qquad
 \exp\!\left(d\sqrt{\frac{w_1w_2w_3}{t}}\right)
 \quad\text{for surfaces}.
\]
These calculations are proved in Lemmas~\ref{lem:weightedcurve} and
\ref{lem:weightedsurface}; they are not inserted heuristically.

Third, uniformity in the coefficients of $F$ is treated by a
coefficient-height dichotomy.  Either the coefficient height is controlled by
the box parameter, or all points already lie on a second bounded-degree
hypersurface.  Fourth, prime ideals at which the reduction is reducible or
singular are excluded using integral Noether forms as in the global-field determinant-method framework\cite{ParedesSasykPoints}; their total contribution is
absorbed in the factor $T^\varepsilon$.

The curve theorem can then be completed with one good prime ideal and a local
interpolation determinant.  The surface theorem is genuinely different: using
one prime ideal would create about $(N\mathfrak p)^2$ residue classes and
would lose one full factor in the desired bound.  We therefore use the global
determinant construction, distributing divisibility over several prime ideals
and producing auxiliary hypersurfaces for many residue classes at once.  The
exceptional set is handled by the second stopping step of this construction.
Finally, throughout the surface argument we distinguish irreducibility over
$K$ from geometric irreducibility, and we separate nonlinear auxiliary curves,
linear components, and singular intersections before applying the curve
estimate.  Sections~\ref{sec:determinant} and~\ref{app:weightedglobal} give
these steps in detail and identify the precise external results used at each
point.

We then apply these estimates to Galois resolvents.  In parallel, we retain a
large-sieve theorem, which is useful because it applies uniformly to the union
of all exceptional Galois groups.  The resulting statements should be viewed
as complementary: the determinant method is sensitive to the Newton support
and to individual resolvents, whereas the large sieve gives a simple global
exceptional-set bound.

The large-sieve estimate is as follows; its proof adapts Gallagher's large-sieve method for Galois groups\cite{Gallagher}, together with standard number-field lattice counting.

\begin{thm}[Quantitative van der Waerden theorem over $K$]\label{thm:mainsep}
For fixed $K$ and $n\ge2$,
\[
 \#\{f\in\mathscr P_{n,N}(K):f\text{ separable and }G_f\ne S_n\}
 \ll_{K,n}N^{dn-d/2}\log(2N).
\]
\end{thm}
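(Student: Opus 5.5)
We prove Theorem~\ref{thm:mainsep} by Gallagher's large sieve over $\OK$, identifying $\mathscr P_{n,N}(K)$ with the lattice points $\Z^{dn}\cap[-N,N]^{dn}$ via the integral basis.

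\textbf{Step 1: reduce to conjugacy classes.} If $f$ is separable and $G_f\ne S_n$, then $G_f$ is a proper subgroup of $S_n$. By Jordan's lemma, some conjugacy class $C$ of $S_n$ then misses $G_f$ altogether. There are only finitely many classes, so it suffices to bound, for each fixed class $C$, the number of separable $f$ with $G_f\cap C=\emptyset$.

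\textbf{Step 2: local condition at each prime.} For a degree-one prime $\mathfrak p$ of $K$ not dividing $\disc(f)$, the Frobenius at $\mathfrak p$ has cycle type equal to the factorization type of $f \bmod \mathfrak p$ over $\OK/\mathfrak p\cong\mathbb F_p$. So if $G_f\cap C=\emptyset$, then the reduction $\bar f\in\mathbb F_p[X]$ avoids the set $\Omega_{\mathfrak p}$ of monic squarefree polynomials of factorization type $C$. By the classical count,
\[
 |\Omega_{\mathfrak p}|=\frac{|C|}{n!}p^n+O_n(p^{n-1}).
\]
Primes dividing the discriminant form a set of density $O(1/p)$ in residue classes, and we simply drop them.

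The obstacle is the \textbf{mismatch of moduli}. Reduction modulo $\mathfrak p$ is a map $\Z^{dn}\to\mathbb F_p^n$, so the sifted set mod $p$ in the full $\Z^{dn}$ picture consists of $p^{dn}-(\text{fibre}\times|\Omega_{\mathfrak p}|)$ residue classes. In proportion, the excluded mass is still $\delta=|C|/n!+O(1/p)$.

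\textbf{Step 3: the multidimensional large sieve.} Apply the large sieve in $dn$ variables on the box of side $2N+1$, sieving by rational primes $p\le Q$ that split completely in $K$. Choose one prime $\mathfrak p\mid p$ of degree one for each; by Chebotarev these have density $1/[\widetilde K:\Q]$. The sieve gives
\[
 \#\ll\frac{(N+Q^2)^{dn}}{L},\qquad L=\sum_{q\le Q}\mu^2(q)\prod_{p\mid q}\frac{\delta}{1-\delta}\gg_{K,n}\frac{Q}{\log Q}.
\]

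\textbf{Step 4: choosing $Q$.} Take $Q=N^{1/2}$, which gives $N^{dn}\log N/N^{1/2}$ with exponent $dn-\tfrac12$. This is weaker than the claimed $dn-d/2$, and recovering the full factor $d$ is the crux.

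To recover it, use the \emph{$\OK$-version} of the large sieve: sieve by ideals $\mathfrak q$ of $\OK$ with $\Norm\mathfrak q\le Q$. The lattice $\OK^n$ in a box of side $N$ satisfies a large sieve inequality with main term $(N^d+Q^{2})^{n}$ in terms of ideal norms. Here $\OK/\mathfrak q$ has $\Norm\mathfrak q$ elements, and the dual frequencies are separated by $\gg\Norm\mathfrak q^{-1/d}$ in each of the $d$ coordinates, i.e.\ the sieve parameter lives on the scale $Q^{1/d}$ per coordinate. Equivalently, apply the $\Z^{dn}$ large sieve to moduli $\mathfrak q$ with the spacing estimate
\[
 \|\xi-\xi'\|\gg (\Norm\mathfrak q\,\Norm\mathfrak q')^{-1/d}.
\]
This permits $\Norm\mathfrak q$ up to $Q=N^{d/2}$, and then $L\gg Q/\log Q$ by the prime ideal theorem over degree-one primes. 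The final bound is
\[
 N^{dn}\cdot N^{-d/2}\log N .
\]

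I expect the main work to be this spacing and duality estimate for the dual lattice of $\mathfrak q\OK^n$. I would handle it with the codifferent, following the standard number-field large sieve (Huxley/Wilson). The remaining steps are routine.
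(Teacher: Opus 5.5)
Your argument is correct. Once you switch in Step 4 to sieving $\mathcal O_K^n$ by prime ideals of norm up to $Q=N^{d/2}$, with the bound $(N+Q^{2/d})^{dn}\asymp N^{dn}+Q^{2n}$, it is essentially the paper's proof: omitted conjugacy class, Frobenius obstruction, finite-field density, prime ideal theorem, and Proposition~\ref{prop:ls}, with your codifferent estimate $\|\xi-\xi'\|\gg(\Norm\mathfrak q\,\Norm\mathfrak q')^{-1/d}$ (in the sup norm, not in each coordinate) supplying the justification of the $Q^{2n}$ term that the paper leaves implicit; the one slip is that primes dividing $\disc(f)$ should not be ``dropped'', since $\Omega_{\mathfrak p}$ consists of squarefree polynomials, so a non-squarefree reduction avoids it automatically, which is the avoidance, uniform over all $f$ at each $\mathfrak p$, that the sieve needs.
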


\begin{thm}[Full coefficient box]\label{thm:mainall}
For fixed $K$ and $n\ge2$,
\[
 \#\bigl(\mathscr P_{n,N}(K)\setminus\mathscr P^0_{n,N}(K)\bigr)
 \ll_{K,n}N^{dn-d/2}\log(2N)+N^{dn-1}.
\]
\end{thm}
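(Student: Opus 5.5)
The bound is the sum of two counts. The first is the separable polynomials with $G_f\ne S_n$, which Theorem~\ref{thm:mainsep} already bounds by $O_{K,n}(N^{dn-d/2}\log(2N))$. The second is the inseparable polynomials, which have $\disc(f)=0$. So it suffices to show
\[
 \#\{f\in\mathscr P_{n,N}(K):\disc(f)=0\}\ll_{K,n}N^{dn-1}.
\]

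For the inseparable count, expand each coefficient in the integral basis, $\alpha_i=\sum_j a_{ij}\omega_j$. Then $f$ corresponds to an integer point $(a_{ij})\in\Z^{dn}$ in the cube $[-N,N]^{dn}$. The condition $\disc(f)=0$ says that $\Delta(\alpha_0,\dots,\alpha_{n-1})=0$ in $\OK$, where $\Delta\in\Z[\alpha_0,\ldots,\alpha_{n-1}]$ is the generic discriminant. Writing $\Delta$ in the basis $\omega_j$ gives $d$ polynomial equations $\Delta_j(a)=0$ with integer coefficients, and it is enough to use just one of them. Concretely, fix an embedding $\sigma$ and consider the norm
\[
 P(a)=\Norm_{K/\Q}\bigl(\Delta(\alpha_0,\ldots,\alpha_{n-1})\bigr)=\prod_\sigma \Delta\Bigl(\textstyle\sum_j a_{0j}\sigma(\omega_j),\ldots\Bigr).
\]
This is a polynomial in $\Z[a]$ of degree $d(2n-2)$. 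It is not identically zero, since taking $a$ to encode a separable polynomial in $\Z[X]$ makes each factor nonzero. The inseparable $f$ are therefore integer zeros of the nonzero polynomial $P$ in $dn$ variables, of bounded degree, inside a cube of side $2N+1$.

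The trivial Schwartz--Zippel bound counts zeros of a nonzero polynomial of degree $D$ in a grid $S^m$. The count is at most $D|S|^{m-1}$; the proof is induction on $m$, using the fact that a nonzero polynomial in one variable has at most $D$ roots. Applied here it gives $\le d(2n-2)(2N+1)^{dn-1}\ll_{K,n}N^{dn-1}$. Adding this to Theorem~\ref{thm:mainsep} proves the claim.

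The only point needing care is that $P\not\equiv0$ as a polynomial in the integer coordinates. This follows because the map $a\mapsto(\alpha_i)$ is a bijection $\Z^{dn}\to\OK^n$, and there are points (e.g.\ $X^n-1$ style separable polynomials) where $\Delta\ne0$. There is no real obstacle. The $N^{dn-1}$ term is exactly this codimension-one (over $\Z$) count, which can dominate $N^{dn-d/2}$ when $d\ge 3$; that is why it appears separately.
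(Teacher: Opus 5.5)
Your proposal is correct and follows the paper's own route: Theorem~\ref{thm:mainsep} handles the separable exceptions, and the discriminant-zero count of Lemma~\ref{lem:disc} handles the rest. Your norm polynomial $P=\Norm_{K/\Q}(\Delta)$ combined with the Schwartz--Zippel bound is an explicit version of the ``standard induction'' the paper cites. One wording slip: the phrase ``fix an embedding $\sigma$'' is spurious, since the norm is the product over all embeddings.
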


The principal subgroup-sensitive refinement of this article is the following.  The
index $[S_n:G]$ measures the degree of the natural coset action of $S_n$ on
$S_n/G$ and is exactly the degree of the generic resolvent used below.

\begin{thm}[Fixed-subgroup bound]\label{thm:fixedG}
Let $G<S_n$ be a fixed subgroup and put $m=[S_n:G]$.  Then there exists
$\nu=\nu(n,G)\ge0$ such that
\[
 \#\{f\in\mathscr P_{n,N}(K): f\text{ separable and }
       G_f\subseteq gGg^{-1}\text{ for some }g\in S_n\}
 \ll_{K,n,G} N^{d(n-1+1/m)}(\log(2N))^{\nu}.
\]
In particular, the same estimate holds for polynomials with $G_f$ conjugate
to $G$.
\end{thm}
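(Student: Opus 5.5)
The plan is to reduce to a generic resolvent and then bound its rational-root locus by effective Hilbert irreducibility over $K$ in the form of \cite{ParedesSasykHIT}. Let $\mathbf a=(a_0,\ldots,a_{n-1})$ be indeterminates and $f_{\mathbf a}=X^n+a_{n-1}X^{n-1}+\cdots+a_0$ the generic polynomial, with roots $x_1,\ldots,x_n$ and Galois group $S_n$ over $K(\mathbf a)$. We choose a primitive $G$-invariant $\theta=P(x_1,\ldots,x_n)$ with $P\in\Z[x_1,\ldots,x_n]$, meaning that its stabiliser in $S_n$ is exactly $G$. The resolvent
\[
 R_G(Y;\mathbf a)=\prod_{\sigma\in S_n/G}\bigl(Y-\sigma\theta\bigr)\in\Z[\mathbf a][Y]
\]
is then monic of degree $m$ in $Y$. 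It is irreducible over $K(\mathbf a)$, since $S_n$ acts transitively on $S_n/G$.

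The key steps run in the following order.

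\textbf{Specialisation.} Fix a specialisation $f=f_{\boldsymbol\alpha}$ with $\boldsymbol\alpha\in\OK^n$ that is separable and has $R_G(Y;\boldsymbol\alpha)$ separable. In that case $G_f\subseteq gGg^{-1}$ for some $g$ if and only if $R_G(Y;\boldsymbol\alpha)$ has a root in $K$, and any such root lies in $\OK$ by integrality. For the remaining separable $f$ where $R_G(\cdot;\boldsymbol\alpha)$ is inseparable, we change $P$ by a Tschirnhaus-type substitution $\theta\mapsto P(x_1+c,\ldots)$ or by $P\mapsto P^k$. Finitely many choices of $P$ suffice so that, for every separable $f$, at least one resolvent is separable. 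We check this via a nonvanishing polynomial condition in the roots, as in the classical Stauduhar argument. So it suffices to treat each fixed $P$.

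\textbf{Counting via effective Hilbert irreducibility.} We must count $\boldsymbol\alpha$ in the box $H_{\mathcal B}(\alpha_i)\le N$ such that $R_G(Y;\boldsymbol\alpha)$ has a root in $K$. Writing each $\alpha_i$ in the integral basis turns this into counting integer points in a box of side $N$ in $\A^{dn}$, namely in the image of the integral points of the cover $Z=\{R_G(y;\mathbf a)=0\}\to\A^n_K$. After restriction of scalars this is a degree-$m$ irreducible cover. The effective Hilbert irreducibility theorem over global fields \cite{ParedesSasykHIT} bounds the number of points in the box with a $K$-rational lift. In the form we need, this is a count of $\OK$-points on a degree-$m$ cover, and it gives $N^{d(n-1+1/m)}(\log N)^{\nu}$. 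This is the standard dimension-growth shape: a codimension-one saving together with the fibre count $N^{d/m}$ in the remaining direction.

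\textbf{Slicing, if the cited theorem is not in exactly this form.} We fix $\alpha_{1},\ldots,\alpha_{n-1}$, which gives $N^{d(n-1)}$ choices, and view $R_G(Y;\alpha_0)$ as a curve in $(Y,\alpha_0)$. The coordinate $\alpha_0$ has degree $1$ in $f$, and $R_G$ has weighted degree $m$ in $Y$ against degree $\le m$ in $a_0$. Theorem~\ref{thm:introcurve} is applied with $B_1=N^{c}$ for $y$ (the height of the roots) and $B_2=N$ for $\alpha_0$. It gives $\ll N^{d/m+\varepsilon}$ points, provided the slice curve is irreducible over $K$. The slices where it is reducible form a thin set of $(\alpha_1,\ldots,\alpha_{n-1})$. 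We handle them by a Hilbert-irreducibility bound in one fewer variable, which costs $N^{d(n-1)-d/2}\cdot N^{d}$ and is admissible for $m\ge2$. The $T^\varepsilon$ loss is replaced by logarithms by using the sharper global-field version in \cite{ParedesSasykHIT}.

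\textbf{Main obstacle.} The hardest step is the uniformity of the slice estimate. We must ensure that $T_F$ for the slice resolvent is $\asymp N^{m}$ with Newton support giving exactly the exponent $d/m$. We also need the coefficient heights, which are polynomial in $N$, not to spoil the bound. Finally we must control the exceptional slices where the specialised resolvent becomes reducible, losing only logarithms rather than $N^\varepsilon$. I would first try to invoke \cite{ParedesSasykHIT} directly for the cover $Z\to\A^n$, and fall back on slicing combined with Theorem~\ref{thm:introcurve} if that fails. The final claim, for $G_f$ conjugate to $G$, is immediate because that set is contained in the set just counted.
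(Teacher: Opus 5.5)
Your primary argument is the paper's proof. The integral generic resolvent $\Phi_G\in\Z[\mathbf A,Y]$ is monic of degree $m$ in $Y$ and irreducible over $K(\mathbf A)$, and it has an $\OK$-root at every specialization with $G_f$ in a conjugate of $G$. Lemma~\ref{lem:heightcomparison} puts the coefficient box inside $[C_KN^d]_K$, so Theorem~\ref{thm:PSmulti} with $s=n$, $r=m$ gives $(N^d)^{n-1+1/m}(\log N)^{\nu}$. Your auxiliary steps, however, should be dropped rather than relied on.

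\emph{Separability step.} No separability of the specialized resolvent is needed. Only the implication ``containment $\Rightarrow$ root'' is used. If $G_f\subseteq gGg^{-1}$, the value of $g\theta$ at the roots of $f$ is fixed by $G_f$, hence lies in $K$, and so in $\OK$ by integrality (Lemma~\ref{lem:specialroot}). This matters, because your proposed repairs fail. Take $\theta=(x_1-x_2)^2$, whose stabilizer in $S_4$ is $S_{\{1,2\}}\times S_{\{3,4\}}$, and $f=X(X-1)(X-2)(X-3)$. The resolvent then has repeated roots. Neither $P\mapsto P^k$ nor a shift $x_i\mapsto x_i+c$ separates them, since the shift leaves $\theta$ unchanged.

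\emph{Slicing fallback.} This does not prove the statement as given. First, it yields $N^{\varepsilon}$ rather than logarithmic losses. Second, your reducible-slice cost is $N^{d(n-1)-d/2}\cdot N^d=N^{dn-d/2}$, and this exceeds $N^{d(n-1+1/m)}$ as soon as $m\ge3$, not only for $m>2$ excluded. To close it you would need $\ll N^{d(n-2+1/m)}$ reducible slices. That is a Hilbert-irreducibility bound as strong as the theorem itself.

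\emph{Restriction of scalars.} This is beside the point: $\mathrm{Res}_{K/\Q}$ of the cover has degree $m^d$, not $m$, over $\A^{dn}_{\Q}$. The saving $d(1-1/m)$ comes from applying the global-field theorem directly over $\OK$ with $B\asymp N^d$.
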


When $m>2$, Theorem~\ref{thm:fixedG} is strictly stronger than
Theorem~\ref{thm:mainsep}: the latter saves $d/2$ powers of $N$, whereas the
fixed-subgroup theorem saves $d(1-1/m)>d/2$.
For $G=A_n$, where $m=2$, the two exponents coincide.

For $n\ge5$ the classical low-index subgroup theorem for $S_n$ and $A_n$
gives a useful hierarchy.

\begin{coroll}[The exceptional hierarchy]\label{cor:hierarchy}
Assume $n\ge5$.  There is $\nu_n\ge0$ such that
\[
 \#\{f\in\mathscr P_{n,N}(K):f\text{ separable},\ G_f\ne S_n,
     \ G_f\not\subseteq A_n\}
 \ll_{K,n}N^{d(n-1+1/n)}(\log(2N))^{\nu_n},
\]
and
\[
 \#\{f\in\mathscr P_{n,N}(K):f\text{ separable},\ G_f\subsetneq A_n\}
 \ll_{K,n}N^{d(n-1+1/(2n))}(\log(2N))^{\nu_n}.
\]
Thus, at the $N^{d(n-1/2)}$ scale of the general exceptional set, the only
proper transitive group not forced into a smaller exponent by subgroup index
is $A_n$ itself.
\end{coroll}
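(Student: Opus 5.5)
The plan is to deduce the corollary from Theorem~\ref{thm:fixedG}, applied to finitely many subgroups, together with the classical description of low-index subgroups of $S_n$ for $n\ge5$. Every proper subgroup $H<S_n$ other than $A_n$ has index at least $n$, and every proper subgroup of $A_n$ has index at least $n$ in $A_n$. For the latter fact we use that $A_n$ is simple for $n\ge5$: a subgroup of index $k<n$ gives a nontrivial homomorphism $A_n\to S_k$, which is impossible since $|A_n|>k!$ when $k<n$. The same argument applied to $S_n$ shows that a subgroup of index less than $n$ has a core which is a nontrivial proper normal subgroup or trivial. Triviality is excluded by the order count, and the only nontrivial proper normal subgroup is $A_n$, so the subgroup contains $A_n$. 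Hence it is $A_n$ or $S_n$.

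First part. Take $f$ separable with $G_f\ne S_n$ and $G_f\not\subseteq A_n$. Then $G_f$ is a proper subgroup of $S_n$ different from $A_n$, so $[S_n:G_f]\ge n$.

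1. Apply Theorem~\ref{thm:fixedG} with $G=G_f$, or rather with $G$ ranging over a set of representatives of the conjugacy classes of proper subgroups $G\neq A_n$. There are finitely many, depending only on $n$.
2. Each such $G$ contributes $\ll N^{d(n-1+1/[S_n:G])}(\log 2N)^{\nu(n,G)}$.
3. Since $[S_n:G]\ge n$, each contribution is at most $\ll N^{d(n-1+1/n)}(\log 2N)^{\nu(n,G)}$.
4. Sum over the finitely many classes and take $\nu_n$ to be the maximum of the $\nu(n,G)$.

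Second part. Here $G_f\subsetneq A_n$. Then $[A_n:G_f]\ge n$, so $[S_n:G_f]\ge 2n$, and Theorem~\ref{thm:fixedG} gives the exponent $d(n-1+1/(2n))$. We take $\nu_n$ to be the maximum over all subgroups involved in both parts.

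One subtlety is that Theorem~\ref{thm:fixedG} counts $G_f\subseteq gGg^{-1}$, not equality. This only helps: every $f$ in either set has $G_f$ conjugate to some representative $G$ in the list, so each $f$ is counted at least once. Overcounting is harmless for an upper bound.

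The final sentence of the statement is then interpretive. At scale $N^{d(n-1/2)}$, every proper group $G\ne A_n$ has $1/[S_n:G]\le 1/n<1/2$ and therefore a strictly smaller exponent. Only $A_n$, with index $2$, attains exponent $d(n-1/2)$.

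The main obstacle is the group-theoretic index fact, which we justify as above. Everything else is bookkeeping over finitely many subgroups.
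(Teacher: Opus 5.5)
Your proposal is correct and follows the paper's proof: apply Theorem~\ref{thm:fixedG} to representatives of the finitely many conjugacy classes of relevant subgroups, then use the minimal-index bounds $[S_n:H]\ge n$ (for proper $H\neq A_n$) and $[S_n:H]\ge 2n$ (for proper $H<A_n$) of Lemma~\ref{lem:lowindex}. The only difference is that you derive these index bounds yourself from the coset action, using the simplicity of $A_n$ and the normal-subgroup structure of $S_n$ for $n\ge5$, whereas the paper cites them from Dixon--Mortimer.
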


\begin{coroll}[Reducible polynomials]\label{cor:reducible}
For $n\ge2$ there is $\nu_n\ge0$ such that the number of separable reducible
polynomials in $\mathscr P_{n,N}(K)$ is
\[
 \ll_{K,n}N^{d(n-1+1/n)}(\log(2N))^{\nu_n}.
\]
Including the nonseparable locus adds $O_{K,n}(N^{dn-1})$.
\end{coroll}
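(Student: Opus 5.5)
We reduce Corollary~\ref{cor:reducible} to Theorem~\ref{thm:fixedG} together with a direct estimate for the nonseparable locus.

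\textbf{Step 1: reducible polynomials lie in intransitive subgroups.} Let $f\in\mathscr P_{n,N}(K)$ be separable and reducible over $K$. Then $f=gh$ with $g,h\in K[X]$ monic. Since $\OK$ is integrally closed, Gauss's lemma gives $g,h\in\OK[X]$. Suppose $\deg g=k$ with $1\le k\le n-1$. Then $G_f$ preserves the set of $k$ roots of $g$. Hence $G_f$ is contained in a conjugate of the intransitive subgroup $S_k\times S_{n-k}$. By symmetry we may take $k\le n/2$. Its index in $S_n$ is
\[
 m_k=\binom nk\ge n,
\]
with equality exactly when $k=1$.

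\textbf{Step 2: apply Theorem~\ref{thm:fixedG} for each $k$.} Applying Theorem~\ref{thm:fixedG} with $G=S_k\times S_{n-k}$ bounds the number of such $f$ by
\[
 \ll N^{d(n-1+1/m_k)}(\log 2N)^{\nu(n,G)}.
\]
Since $1/m_k\le 1/n$, each of these bounds is at most $N^{d(n-1+1/n)}(\log 2N)^{\nu(n,G)}$. Summing over the finitely many $k$ and taking $\nu_n=\max_k\nu(n,S_k\times S_{n-k})$ gives the first claim. The case $n=2$ fits this scheme: there $k=1$ and $m=2=n$, so the exponent is $d(n-1/2)$.

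\textbf{Step 3: the nonseparable locus.} It remains to show that the number of nonseparable $f\in\mathscr P_{n,N}(K)$ is $O(N^{dn-1})$. Such $f$ satisfy
\[
 \disc(f)=0,
\]
where $\disc$ is a nonzero polynomial in the coefficients $\alpha_0,\dots,\alpha_{n-1}$. Write each $\alpha_i$ in the integral basis $\omega_1,\dots,\omega_d$. Then $\disc(f)$ becomes an element $\sum_j P_j(\mathbf a)\omega_j$ whose components $P_j$ are integer polynomials in the $dn$ rational coordinates $\mathbf a$.

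The condition $\disc(f)=0$ forces every $P_j$ to vanish. Some $P_j$ is not identically zero, since the map from coordinates to coefficients is surjective and $\disc$ is a nonzero polynomial. Consequently the set $\{\disc=0\}$ in $\Z^{dn}$ lies on a proper hypersurface of bounded degree. The Schwartz--Zippel bound then gives $O_{K,n}(N^{dn-1})$ lattice points of height at most $N$ on it.

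\textbf{Main obstacle.} The only delicate point is Step 1. One must verify that containment of $G_f$ in a conjugate of $S_k\times S_{n-k}$ is exactly the hypothesis of Theorem~\ref{thm:fixedG}, and that the minimal index over all intransitive maximal subgroups is $n$. Both facts are elementary once $k\le n/2$ is arranged.
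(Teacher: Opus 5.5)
Your proposal is correct and follows the paper's argument: a separable reducible $f$ has $G_f$ inside a conjugate of some $S_k\times S_{n-k}$ of index $\binom nk\ge n$, so Theorem~\ref{thm:fixedG} applies, and the nonseparable locus is the discriminant-zero count of Lemma~\ref{lem:disc}, which you reprove via Schwartz--Zippel. The only difference is cosmetic: you apply Theorem~\ref{thm:fixedG} directly to the maximal intransitive subgroups, whereas the paper sums over all intransitive conjugacy classes and invokes Lemma~\ref{lem:lowindex}(iii), whose proof reduces to exactly these subgroups.
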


The rational theorem of Bhargava\cite{Bhargava} gives the sharper codimension-one bound
$O_n(N^{n-1})$ when $K=\mathbb Q$.  The purpose here is different: for a
fixed general number field we obtain a uniform quantitative theorem and a
subgroup-sensitive refinement.  The latter becomes increasingly strong as
the subgroup index grows.

\section{Determinant estimates in boxes with different side lengths over a number field}\label{sec:determinant}

This section proves the two weighted determinant estimates that are needed in the
Galois applications.  Over $\Q$ the corresponding curve estimate is the
unequal-side-length Bombieri--Pila estimate of Browning--Heath-Brown
\cite{BrowningHB}, while the surface covering statement is the refinement
recorded by Browning \cite[Lemma~1]{Browning} from Salberger's global
determinant method.  Our task is to keep the different coordinate bounds throughout
the global-field determinant argument.  The isotropic global-field machinery is
due to Paredes--Sasyk \cite{ParedesSasykPoints}; the affine global-field
formulation and its compatibility with the modern determinant method are also
explained by Vermeulen \cite[Section~6]{VermeulenAffine}.  We give the weighted
bookkeeping in full because it is precisely this part that is used later.

Throughout this section $K$ is a fixed number field of degree $d$, with ring of
integers $\OK$.  The phrase ``box with different side lengths'' will always
mean separate bounds $H_\infty(x_i)\le B_i$ with arbitrary $B_i\ge2$; no
comparability hypothesis such as $B_i\asymp B_j$ is imposed.  For an archimedean place $v$ put $d_v=[K_v:\R]$, so that
$\sum_{v\mid\infty}d_v=d$.  We use the multiplicative archimedean box height
\[
 H_\infty(x)=\max_{\sigma:K\hookrightarrow\C}|\sigma(x)|.
\]
Replacing this by the integral-basis height changes all side lengths by constants
depending only on $K$, by equivalence of norms on the fixed Minkowski lattice.

\subsection{The two main determinant theorems}

For
\[
 F(X_1,\ldots,X_r)=\sum_{\mathbf e}c_{\mathbf e}X_1^{e_1}\cdots X_r^{e_r}
 \in\OK[X_1,\ldots,X_r]
\]
and $B_i\ge2$, define
\begin{equation}\label{eq:weightedT}
 T_F(\mathbf B)=\max_{c_{\mathbf e}\ne0}\prod_{i=1}^rB_i^{e_i}.
\end{equation}
Notice that $T_F(\mathbf B)>1$ whenever $F$ is nonconstant.

\begin{thm}[Curve estimate in a box with different side lengths]\label{thm:lopcurve}
Let $D\ge1$ be fixed and let $F\in\OK[X_1,X_2]$ be irreducible over $K$,
of total degree at most $D$.  Put
\[
 N_K(F;B_1,B_2)=\#\{(x_1,x_2)\in\OK^2:F(x_1,x_2)=0,
 H_\infty(x_i)\le B_i\}.
\]
Then, for every $\varepsilon>0$,
\begin{equation}\label{eq:curve-main}
 N_K(F;B_1,B_2)
 \ll_{K,D,\varepsilon}
 T_F(B_1,B_2)^\varepsilon
 \exp\!\left(
 d\frac{\log B_1\log B_2}{\log T_F(B_1,B_2)}
 \right).
\end{equation}
The implied constant is uniform in the coefficients of $F$.
\end{thm}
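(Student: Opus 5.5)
We follow the Bombieri--Pila/Heath-Brown single-prime determinant method, carried out over $\OK$ and weighted by $w_i=\log B_i$, with $t=\log T_F(\mathbf B)$.

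\textbf{Reductions.} First we reduce to the case where $F$ is absolutely irreducible. If $F$ is irreducible over $K$ but not geometrically irreducible, then every $K$-rational point of $F=0$ lies in the intersection of two distinct conjugate components. By B\'ezout there are $O_D(1)$ such points.

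We may also assume $F$ is not linear in a way that makes the bound trivial. If $F$ has degree $1$ in $X_2$, then $t\ge w_2$, and the points are parametrised by $x_1$. Counting these directly gives at most $\ll B_1^{d}$ points in the integral-basis sense. This is consistent with the target, because $\exp(d w_1w_2/t)\ge \exp(d\,w_1w_2/\max(w_1,w_2))$ whenever $T$ is attained by a linear monomial. For linear $F$ we simply check the bound by lattice counting.

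\textbf{Main construction.} Next we choose a prime ideal $\mathfrak p$ of norm $P$ such that $F\bmod\mathfrak p$ is absolutely irreducible. Such primes exist by the Noether forms mentioned in the introduction. Only $O(\log T+\log\|F\|)$ primes are bad, so we can find a good $\mathfrak p$ with $P\ll T^{\varepsilon}$ at the relevant scale, after the coefficient-height dichotomy has bounded $\log\|F\|$ by $O(t)$.

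For each nonsingular point $\xi$ of the reduced curve over $\OK/\mathfrak p$ (there are $O(P)$ of them, by Weil), we consider the integral points $x_1,\dots,x_s$ in the box reducing to $\xi$. We form the interpolation matrix of monomials $X_1^{a}X_2^{b}$. These are chosen as the standard monomials modulo the leading monomial of $F$ in a weighted order with weights $(w_1,w_2)$, restricted to weighted degree $\le \lambda$.

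Lemma~\ref{lem:weightedcurve} gives the count $s\approx \lambda^2/(2w_1w_2)\cdot(\text{strip factor})$ and the total weight $W$ of these monomials. Concretely, the standard monomials form a strip of weighted width $t$, so the count is roughly $\lambda t/(w_1w_2)$ and the weight is roughly $\lambda s/2$.

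\textbf{Determinant bounds.} Suppose the determinant $\Delta\in\OK$ is nonzero. Archimedean control at every embedding gives $\log|N_{K/\Q}\Delta|\le d\,(W+O(s\log s))$. Local analysis at $\xi$ gives divisibility: parametrising the branch by a local uniformiser, $\Delta\in\mathfrak p^{e}$ with $e\ge s^2/2\,(1+o(1))$, so $\log|N\Delta|\ge (s^2/2)\log P$.

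Comparing the two forces $\Delta=0$ once $\log P > d\,(2W/s^2)\approx d\,w_1w_2/t\,(1+o(1))$. So we take $\log P= d\,w_1w_2/t+\varepsilon t$.

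Then all points in each residue class lie on an auxiliary curve $G_\xi=0$ of weighted degree $\lambda$ that is not divisible by $F$. The degree of $G_\xi$ is bounded in terms of $D$ and $\varepsilon$ once $\lambda$ is taken to be a bounded multiple of $t$ divided by the smallest weight. Bézout then bounds $|F=G_\xi=0|$ by $O_{D,\varepsilon}(1)$. Summing over the $O(P)$ classes gives $N_K\ll P\ll T^{\varepsilon}\exp(d\,w_1w_2/t)$. Singular residue classes contribute $O_D(1)$ classes and are handled in the same way with multiplicity.

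\textbf{Main obstacle.} The hard step is the $\mathfrak p$-adic divisibility estimate over $\OK$ with the nonisotropic monomial set. The exponent $s^2/2$ must be achieved for the weighted standard-monomial basis, which requires a Hensel/uniformiser expansion in the completion $K_{\mathfrak p}$ that may be ramified over $\Q_p$. Keeping uniformity in the coefficients of $F$ is also part of this step. We would handle it by working in the residue field of size $P$ and using the fact that the determinant's valuation counts total order of vanishing along the branch, as in Paredes--Sasyk \cite{ParedesSasykPoints}.
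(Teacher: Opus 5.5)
Your proposal is essentially the paper's proof: reduce to absolute irreducibility via conjugate components and B\'ezout, use the coefficient-height dichotomy so that excluding bad primes costs only $T^\varepsilon$, take one good prime with $\log N\mathfrak p\approx d\,w_1w_2/t+\varepsilon t$, play the $\mathfrak p^{R(R-1)/2}$ divisibility of the weighted standard-monomial determinant against the field-norm Hadamard bound, and finish with B\'ezout over the $O(N\mathfrak p)$ residue classes. The local divisibility you call the main obstacle is routine, since Lemma~\ref{lem:localdet} holds for any $R$ functions regular at a smooth residue point, whatever the weights. Two asides need repair. First, the ``degree $1$ in $X_2$'' reduction is unnecessary and its inequality points the wrong way: a count $\ll B_1^d$ exceeds the target whenever $t>w_2$, e.g.\ for $X_2=X_1^r$ with $B_1$ large. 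Second, instead of scaling $\lambda$ with $t$, you should fix the number $R=R(K,D,\varepsilon)$ of monomials, so that their degrees are automatically $<R$. Then use the uniform bound $\lambda_j\le\frac{w_1w_2}{t}j+O_D(t)$, whose total error $O_D(Rt)$ is absorbed by the $\varepsilon t$ built into $\log N\mathfrak p$.
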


\begin{thm}[Surface covering in a box with different side lengths]\label{thm:lopsurface}
Let $D\ge2$ be fixed and let $F\in\OK[X_1,X_2,X_3]$ define an absolutely
irreducible affine surface of degree at most $D$.  Write
\[
 T=T_F(B_1,B_2,B_3),\qquad
 V_{3,K}=\exp\!\left(
 d\left(\frac{\log B_1\log B_2\log B_3}{\log T}\right)^{1/2}
 \right).
\]
For every $\varepsilon>0$ there are polynomials
$G_1,\ldots,G_J\in\OK[X_1,X_2,X_3]$ and a finite set
$Z\subset\OK^3$ such that
\begin{align}
 J&\ll_{K,D,\varepsilon}T^\varepsilon V_{3,K},\label{eq:detJ}\\
 |Z|&\ll_{K,D,\varepsilon}T^\varepsilon V_{3,K}^2.\label{eq:detZ}
\end{align}
with the following properties:
\begin{enumerate}[label=\textup{(\roman*)}]
\item $\deg G_j=O_{K,D,\varepsilon}(1)$ and $F\nmid G_j$ for every $j$;
\item every $\mathbf x\in\OK^3\setminus Z$ satisfying
$F(\mathbf x)=0$ and $H_\infty(x_i)\le B_i$ lies on $G_j=0$ for some $j$.
\end{enumerate}
The degree bounds and implied constants are independent of the coefficients of
$F$.
\end{thm}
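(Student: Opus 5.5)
We follow Salberger's global determinant method as refined for unequal side lengths by Browning \cite[Lemma~1]{Browning}, working over $\OK$ in the Paredes--Sasyk framework \cite{ParedesSasykPoints}. We treat the integral points as a subset of $\mathbb Z^{3d}$ through the integral basis, but we keep the interpolation monomials in $X_1,X_2,X_3$ over $K$. The plan has four steps.

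\emph{Weighted monomials.} Put $w_i=\log B_i$ and $t=\log T$, and fix a weighted monomial order with weights $w_i$ in which the leading monomial $X^{\mathbf f}$ of $F$ realises $T$. Modulo $X^{\mathbf f}$, the standard monomials of weighted degree at most $\delta$ (in the weighted sense, with $\delta$ a large multiple of the scale) are counted by Lemma~\ref{lem:weightedsurface}, which gives their number $s$ and total weight $W$. For a set of $s$ points in the box, the interpolation determinant $\Delta\in\OK$ satisfies $\log|\Norm_{K/\Q}\Delta|\le d\,W+O(s\log s)$, since each conjugate of a monomial $x^{\mathbf e}$ has size at most $\prod B_i^{e_i}$.

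\emph{Coefficient dichotomy and good primes.} First we apply the coefficient-height dichotomy. Either the height of $F$ is at most $T^{O(1)}$, or all points lie on a single auxiliary hypersurface, obtained by a Siegel-lemma argument on the coefficient lattice, and that case is trivial. Next, integral Noether forms let us discard the prime ideals $\mathfrak p$ at which $F \bmod \mathfrak p$ fails to be absolutely irreducible. Their total norm is $T^{O(1)}$, so they cost only $T^\varepsilon$. For a good $\mathfrak p$, the Lang--Weil bound gives about $(\Norm\mathfrak p)^2$ residue points. At each smooth residue point, $\Delta$ is divisible by $\mathfrak p$ to an order $\gtrsim s^{3/2}/(\Norm\mathfrak p)^{1/2}$, by the Hensel-parametrisation count that Salberger uses. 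The singular residue points carry an $O(\Norm\mathfrak p)$ total and contribute the same exponent.

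\emph{Global step and stopping.} Following Salberger, we multiply the local divisibilities over all good primes $\mathfrak p$ with $\Norm\mathfrak p\le P$. Comparing with the archimedean bound on $|\Norm_{K/\Q}\Delta|$ shows that $\Delta=0$ once $P\gg T^\varepsilon V_{3,K}$. The choice of exponent $\delta$ is dictated exactly by the balance between $W$ and $s^{3/2}$ coming from Lemma~\ref{lem:weightedsurface}, and this is where the square root and the factor $d$ appear. Vanishing of $\Delta$ for every $s$-tuple of points lying in a fixed tuple of residue classes gives one auxiliary form $G$ with $\deg G=O(1)$. Moreover $F\nmid G$, because $G$ is a combination of standard monomials. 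Grouping residue classes via the global construction then produces $J\ll T^\varepsilon V_{3,K}$ forms rather than $(\Norm\mathfrak p)^2$ of them.

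\emph{The exceptional set.} The second stopping step of Salberger's iteration handles classes whose points are too few to run the determinant argument. It bounds them together as a set $Z$ with $|Z|\ll P^2\ll T^\varepsilon V_{3,K}^2$. The main obstacle is this global step. The local divisibility estimate must be carried out over $\OK$, using ideal norms and the completions $K_{\mathfrak p}$ of residue degree possibly larger than $1$. It must also be carried out with weighted degrees, so that the exponent matches Lemma~\ref{lem:weightedsurface} and the constants stay independent of the coefficients of $F$. To get there, we would first adapt Salberger's local lemma to the $\mathfrak p$-adic setting, with $q=\Norm\mathfrak p$ in place of $p$.
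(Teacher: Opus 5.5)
Your outline is the paper's: weighted standard monomials from Lemma~\ref{lem:weightedsurface}, the field norm producing the factor $d$, the coefficient dichotomy, Noether forms, and a Salberger-type global step as in Proposition~\ref{prop:globalcover}. But the global step, which is the heart of the theorem, fails as you state it, in two ways.

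\textbf{The local order is wrong.} Take an arbitrary $s$-tuple of points. Apply Lemma~\ref{lem:localdet} at each smooth residue point, and use convexity over the $\asymp(\Norm\mathfrak p)^2$ points of $X_{\mathfrak p}(\kappa(\mathfrak p))$. This gives $\operatorname{ord}_{\mathfrak p}\Delta\ge\frac{2^{3/2}}{3}\,s^{3/2}/\Norm\mathfrak p-O(s)$, not $s^{3/2}/(\Norm\mathfrak p)^{1/2}$.

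With your exponent, the prime sum would be $\sum_{\Norm\mathfrak p\le P}\log\Norm\mathfrak p/(\Norm\mathfrak p)^{1/2}\asymp P^{1/2}$. That forces $\Delta=0$ already for $P$ polylogarithmic in $T$, i.e.\ it would prove the theorem with $V_{3,K}$ replaced by a power of $\log T$. This is false. Take $X_3=X_1X_2$ with $B_1=B_2=B$ and $B_3=B^2$: there are $\asymp B^{2d}$ points, but by Theorem~\ref{thm:lopcurve} each bounded-degree curve on the surface carries only $\ll B^{d+\varepsilon}$ of them.

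Only the corrected bound gives the right threshold. Using $\sum_{\Norm\mathfrak p\le P}\log\Norm\mathfrak p/\Norm\mathfrak p=\log P+O_K(1)$, it yields $\log P\approx dW$ with $W=(w_1w_2w_3/t)^{1/2}$, i.e.\ $P\approx V_{3,K}$.

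\textbf{Even corrected, your computation does not give $\deg G=O(1)$.} The losses $O(s)$ sum to $O(sP)$ over $\Norm\mathfrak p\le P$. This is negligible against $s^{3/2}\log P$ only if $s\gg P^2\approx V_{3,K}^2$. So the all-primes argument yields a single auxiliary form with $\gg V_{3,K}^2$ monomials, whose degree must grow like $s^{1/3}$.

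Suppose instead you fix a residue point modulo each prime of a set $\mathcal P$ with $\sum_{\mathfrak p\in\mathcal P}\log\Norm\mathfrak p>dW$. Then $s=O_\varepsilon(1)$ suffices and the degree is bounded. But there are $\asymp\prod_{\mathfrak p\in\mathcal P}(\Norm\mathfrak p)^2\approx V_{3,K}^2$ such patterns, which is the naive count of Remark~\ref{rem:globalessential}.

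Your proposal slides between these two regimes. The sentences saying that ``grouping'' gives $J\ll T^\varepsilon V_{3,K}$ and that the leftover points number $\ll P^2$ carry the whole content of \eqref{eq:detJ}--\eqref{eq:detZ}, and you give no mechanism for either. Classes with too few points are no obstruction: a class with fewer than $s$ members lies on a bounded-degree $G$ with $F\nmid G$ automatically. The issue is the number of classes, not their size.

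The paper itself gets these two counts by transplanting the combinatorial part of Salberger's construction, as in Browning's Lemma~1, to $\OK$. It argues that only two things change: the archimedean threshold, now multiplied by $d$, and the passage from $p$ to $\Norm\mathfrak p$. You need one of the following:
\begin{itemize}
\item make that importation explicit: name the rational statement used and check that its proof uses only Lemma~\ref{lem:localdet}, \eqref{eq:appArch} and the prime ideal theorem;
\item supply the mechanism that turns the global vanishing into $O(T^\varepsilon V_{3,K})$ bounded-degree forms outside $O(T^\varepsilon V_{3,K}^2)$ points.
\end{itemize}
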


When $K=\Q$, Theorem~\ref{thm:lopcurve} recovers the curve estimate for unequal coordinate ranges used by
Chow--Dietmann\cite[Theorem~2.2]{ChowDietmannVDW}, and Theorem~\ref{thm:lopsurface} recovers Browning's surface-covering lemma\cite[Lemma~1]{Browning}, based on Salberger's global determinant method\cite{Salberger}.  The factor $d$ is necessary: for $X_2=X_1^r$ and $B_2=B_1^r$, the
left side of \eqref{eq:curve-main} has order $B_1^d$.

\subsection{Archimedean norms and determinants}

We first isolate the elementary number-field input.

\begin{lemma}[Norm comparison]\label{lem:normdet}
For $a\in\OK$,
\[
 |N_{K/\Q}(a)|=\prod_{v\mid\infty}|a|_v^{d_v}.
\]
In particular, if $a\ne0$ and an integral ideal $\mathfrak a$ divides $(a)$,
then
\[
 N\mathfrak a\le |N_{K/\Q}(a)|.
\]
\end{lemma}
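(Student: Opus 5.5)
The first identity is the standard factorization of the norm through the complex embeddings of $K$. We write $N_{K/\Q}(a)=\prod_{\sigma:K\hookrightarrow\C}\sigma(a)$ and group the $d$ embeddings by archimedean place. A real place $v$ comes from a single real embedding $\sigma$, with $|a|_v=|\sigma(a)|$ and $d_v=1$. A complex place $v$ comes from a conjugate pair $\sigma,\bar\sigma$, with $|a|_v=|\sigma(a)|$ and $d_v=2$, so that pair contributes $|\sigma(a)||\bar\sigma(a)|=|a|_v^{2}$. Taking absolute values of the product and regrouping gives $|N_{K/\Q}(a)|=\prod_{v\mid\infty}|a|_v^{d_v}$. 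Here $|\cdot|_v$ is the usual absolute value attached to $v$, not the normalized one squared, which is consistent with the exponent $d_v=[K_v:\R]$.

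For the second claim I use the link between the element norm and the ideal norm: for $a\in\OK\setminus\{0\}$,
\[
 N((a))=[\OK:a\OK]=|N_{K/\Q}(a)|.
\]
This holds because multiplication by $a$ is a $\Q$-linear map of $K$ with determinant $N_{K/\Q}(a)$. It preserves the lattice $\OK$, and the index of the image of a lattice under an integral linear map equals the absolute value of its determinant.

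Now suppose the integral ideal $\mathfrak a$ divides $(a)$. Then $(a)=\mathfrak a\mathfrak b$ for some integral ideal $\mathfrak b$, by unique factorization of ideals in the Dedekind domain $\OK$. The ideal norm is multiplicative, so
\[
 |N_{K/\Q}(a)|=N\mathfrak a\,N\mathfrak b\ge N\mathfrak a,
\]
since $N\mathfrak b\ge1$ for an integral ideal. Equivalently, $a\OK\subseteq\mathfrak a\subseteq\OK$ gives $[\OK:\mathfrak a]\le[\OK:a\OK]$ directly, without appealing to factorization.

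No step here is a genuine obstacle. The only point needing care is the normalization at complex places, namely that $|a|_v^{d_v}$ accounts for both embeddings in the conjugate pair.
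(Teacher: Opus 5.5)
Your proof is correct and follows essentially the same route as the paper: the first identity comes from grouping the embeddings of $K$ by archimedean place (which is all the paper's terse appeal to the product formula amounts to), and the inequality comes from $|N_{K/\Q}(a)|=N((a))=N\mathfrak a\,N\mathfrak b$. The paper states the second step as the divisibility $N\mathfrak a\mid|N_{K/\Q}(a)|$, which your factorization gives directly, as does your index identity $[\OK:a\OK]=[\OK:\mathfrak a]\,[\mathfrak a:a\OK]$.
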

\begin{proof}
The first identity is the product formula restricted to the archimedean
embeddings for an algebraic integer.  If $\mathfrak a\mid(a)$ then
$N\mathfrak a\mid |N_{K/\Q}(a)|$ as positive integers.
\end{proof}

Let $M_1,\ldots,M_R$ be monomials in $r$ variables, say
$M_j=X_1^{e_{1j}}\cdots X_r^{e_{rj}}$.  If $\mathbf x^{(1)},\ldots,
\mathbf x^{(R)}\in\OK^r$ lie in the box $H_\infty(x_i)\le B_i$, then
Hadamard's inequality at each embedding gives
\begin{equation}\label{eq:archdet}
 \prod_{v\mid\infty}
 \left|\det(M_j(\mathbf x^{(i)}))_{i,j}\right|_v^{d_v}
 \le R^{dR/2}\prod_{j=1}^R\prod_{i=1}^rB_i^{d e_{ij}}.
\end{equation}
The occurrence of $d$ in the exponent is the only archimedean change from the
rational argument.

\begin{lemma}[Local determinant divisibility]\label{lem:localdet}
Let $X/\OK$ be the closure of an integral affine variety of dimension $m$, and
let $\mathfrak p$ be a prime of good reduction.  Suppose
$\mathbf x^{(1)},\ldots,\mathbf x^{(R)}\in X(\OK)$ reduce to the same smooth
point of $X(\OK/\mathfrak p)$.  For any regular functions
$f_1,\ldots,f_R$ integral at that point,
\[
 \det(f_j(\mathbf x^{(i)}))_{i,j}\in\mathfrak p^{E_m(R)},
\]
where $\nu_0\le\nu_1\le\cdots$ is the nondecreasing sequence of
total degrees of monomials in $m$ variables (each degree repeated according
to its multiplicity) and
\[
 E_m(R)=\sum_{j=0}^{R-1}\nu_j
       =\frac{m}{m+1}(m!)^{1/m}R^{1+1/m}+O_m(R).
\]
For curves this reads $E_1(R)=R(R-1)/2$.
\end{lemma}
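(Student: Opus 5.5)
We will follow Bombieri--Pila and Heath-Brown's local argument, with the $\mathfrak p$-adic completion of $K$ in place of $\Q_p$. Let $P\in X(\OK/\mathfrak p)$ be the common smooth reduction. Since $\mathfrak p$ is a prime of good reduction, the $\OK$-model is smooth over $\OK$ at $P$. Hence, after completing at $\mathfrak p$, there are local parameters $t_1,\ldots,t_m\in\mathcal O_{K_{\mathfrak p}}[X]$, regular at $P$ and vanishing at a fixed lift of $P$. These give an isomorphism of the residue disc of $P$ in $X(\mathcal O_{K_{\mathfrak p}})$ with $(\mathfrak p\mathcal O_{K_{\mathfrak p}})^m$.

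We choose them as follows. By smoothness of relative dimension $m$, the Jacobian criterion lets us pick $m$ of the affine coordinates, shifted by lifts of the coordinates of $P$, whose differentials span the cotangent space at $P$ modulo $\mathfrak p$. Hensel's lemma (the formal implicit function theorem over the complete DVR $\mathcal O_{K_{\mathfrak p}}$) then expresses the remaining coordinates as power series in $t_1,\ldots,t_m$ with coefficients in $\mathcal O_{K_{\mathfrak p}}$. These series converge on $\mathfrak p^m$.

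Any regular function $f_j$ integral at $P$ is a quotient $g/h$ with $g,h$ polynomials over $\OK$ and $h(P)\ne0$ mod $\mathfrak p$. Thus $1/h$ expands as a power series with integral coefficients on the disc. It follows that each $f_j$ has an expansion
\[
 f_j=\sum_{\mathbf k\in\mathbb N^m}a_{j,\mathbf k}\,t^{\mathbf k},\qquad a_{j,\mathbf k}\in\mathcal O_{K_{\mathfrak p}},
\]
valid at every $\mathbf x^{(i)}$. Moreover $t(\mathbf x^{(i)})\in\mathfrak p^m$, i.e.\ every coordinate $t_\ell(\mathbf x^{(i)})$ has $\mathfrak p$-adic valuation at least $1$.

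The main step is the determinant expansion. Order the monomials $t^{\mathbf k}$ by total degree, $\mu_0,\mu_1,\ldots$, with degrees $\nu_0\le\nu_1\le\cdots$. Write the matrix $(f_j(\mathbf x^{(i)}))$ as the product $U\cdot A$ of the (infinite) matrix $U=(\mu_s(\mathbf x^{(i)}))_{i,s}$ and $A=(a_{j,\mu_s})_{s,j}$. By the Cauchy--Binet formula, which is justified by $\mathfrak p$-adic convergence (truncate at high degree and pass to the limit), we get
\[
 \det(f_j(\mathbf x^{(i)}))=\sum_{S}\det U_S\det A_S
\]
over $R$-subsets $S$ of monomials. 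Each column of $U_S$ indexed by a monomial of degree $\nu$ lies in $\mathfrak p^{\nu}$. Hence $\det U_S\in\mathfrak p^{\sum_{s\in S}\deg\mu_s}$, and this sum is at least $\sum_{j<R}\nu_j=E_m(R)$, because the smallest possible total is obtained by taking the $R$ lowest-degree monomials. Since $\det A_S$ is integral, the determinant lies in $\mathfrak p^{E_m(R)}\mathcal O_{K_{\mathfrak p}}\cap\OK=\mathfrak p^{E_m(R)}$.

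It remains to evaluate $E_m(R)$. There are $\binom{k+m-1}{m-1}$ monomials of degree exactly $k$, and $\binom{k+m}{m}=k^m/m!+O(k^{m-1})$ of degree at most $k$. If $R$ lies between consecutive cumulative counts, then $k\approx(m!R)^{1/m}$. Summing gives
\[
 E_m(R)=\sum_{\ell<k}\ell\binom{\ell+m-1}{m-1}+O(kR).
\]
The main term is $\sum_{\ell<k}\ell^m/(m-1)!\approx k^{m+1}/((m+1)(m-1)!)$. Substituting $k^m=m!R$ gives
\[
 \frac{m}{m+1}\,k\,R=\frac{m}{m+1}(m!)^{1/m}R^{1+1/m}.
\]
The error terms are $O_m(kR)$, relatively $O(1/k)$; this is the stated form up to the usual lower-order term. (Strictly, the error here is $O_m(R)$ only after noting that the correction is of order $k^m=O(R)$.) For $m=1$ there is exactly one monomial in each degree, so $\nu_j=j$ and $E_1(R)=R(R-1)/2$ exactly.

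The main obstacle is the first step: justifying integral local parameters and convergent integral expansions of $f_j$ uniformly over the residue disc. This needs good reduction, meaning smoothness of the model at $P$ and not merely of the generic fibre. We will invoke Hensel's lemma in its multivariable form for this, as in Salberger's and Paredes--Sasyk's treatment.
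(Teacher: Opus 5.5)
Your proposal is correct and follows essentially the same route as the paper's sketched proof: integral local parameters on the completed local ring at the smooth reduction point, power-series expansion of the $f_j$, and monomials ordered by total degree with each degree-$\nu$ monomial contributing $\mathfrak p^{\nu}$. Your Cauchy--Binet expansion merely replaces the paper's row elimination, and your evaluation of $E_m(R)$ fills in a step the paper leaves implicit.

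There are two cosmetic corrections. First, the error in that evaluation is $O_m(k^m)=O_m(R)$, not $O_m(kR)$, which is the size of the main term. Second, when the $f_j$ are genuinely rational functions, the determinant lies in $\mathfrak p^{E_m(R)}\mathcal O_{K,\mathfrak p}$ (the localization at $\mathfrak p$) rather than in $\OK$, so your final intersection should be taken there.
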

\begin{proof}
Complete the local ring at the smooth reduction point.  It is a formal power
series ring in $m$ parameters over the unramified completion of $\mathcal O_{K,\mathfrak p}$.
Ordering monomials in the parameters by total degree and successively subtracting
rows shows that the $j$th new row contributes at least the $\mathfrak p$-adic
order of the $j$th monomial.  Summing these orders gives $E_m(R)$.  This is the
standard local determinant lemma; the argument is purely local and is identical
to the global-field proof in \cite[Section~3]{ParedesSasykPoints}.
\end{proof}

The next elementary observation handles geometric reducibility in the curve
case.

\begin{lemma}[Geometrically reducible plane curves]\label{lem:georedcurve}
Let $F\in K[X,Y]$ be irreducible over $K$ but not absolutely irreducible, with
$\deg F\le D$.  Then the set of $K$-rational points on $F=0$ has cardinality
$O_D(1)$.
\end{lemma}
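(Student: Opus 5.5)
The plan is to use Galois conjugation: a $K$-rational point of $F=0$ must lie on every geometric component, so it lies on the intersection of two distinct components. That intersection is finite and Bézout bounds its size.

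First, the structure of $F$ over $\bar K$. Since $F$ is irreducible over $K$ but not absolutely irreducible, pass to a finite Galois extension $L/K$ over which $F$ factors into absolutely irreducible factors. Let $F_1$ be one irreducible factor over $\bar K$. Irreducibility of $F$ over $K$ forces $\operatorname{Gal}(L/K)$ to act transitively on the irreducible factors of $F$ up to scalars. Otherwise the product of the factors in one orbit, suitably normalized, would be a proper factor of $F$ defined over $K$. Non-absolute irreducibility then means there are at least two distinct conjugates, $F_1$ and $F_2=\sigma(F_1)$. Also $F_2$ is not a scalar multiple of $F_1$: if all conjugates were proportional to $F_1$, then $F$ would be a constant times a power of $F_1$ with $F_1$ definable over $K$, contradicting irreducibility over $K$ unless the power is $1$ and $F$ is absolutely irreducible.

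Second, the key observation. Let $P=(x,y)\in K^2$ satisfy $F(P)=0$. Then $F_i(P)=0$ for some $i$. Applying any $\tau\in\operatorname{Gal}(L/K)$ fixes $P$ and sends $F_i$ to $\tau(F_i)$, so $\tau(F_i)(P)=0$. By transitivity, $P$ lies on every geometric component of $F=0$. In particular $F_1(P)=F_2(P)=0$.

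Third, the count. Since $F_1$ and $F_2$ are non-associate irreducible polynomials over $\bar K$, they have no common factor. Bézout's theorem for plane curves, applied to the projective closures of these coprime plane curves, gives at most $\deg F_1\cdot\deg F_2\le D^2$ common points. Hence the $K$-rational points number at most $D^2=O_D(1)$.

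The main point needing care is the second step. It requires checking that the Galois orbit structure gives at least two non-proportional conjugate factors, including the case where $F$ might be a power of a single absolutely irreducible factor. That case is excluded because $F$ is irreducible over $K$ and hence separable in the appropriate sense (we are in characteristic zero). Beyond this, the argument is routine.
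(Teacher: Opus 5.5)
Your proposal is correct and follows essentially the same route as the paper: pass to a Galois extension, use irreducibility over $K$ to obtain two non-proportional conjugate absolutely irreducible factors, observe that every $K$-point lies on both, and bound their common zeros by Bézout with $D^2$. Your explicit transitivity step (a $K$-point on some factor $F_i$ lies on every conjugate $\tau(F_i)$) supplies a detail the paper leaves implicit, namely why a $K$-point of $F=0$ must lie on the particular factor $F_1$ at all.
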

\begin{proof}
Over a finite Galois extension $L/K$, choose an absolutely irreducible factor
$F_1$ of $F$.  Since $F$ is irreducible over $K$, some conjugate $F_1^\sigma$ is
distinct from $F_1$.  Every $K$-point on $F=0$ lies on both $F_1=0$ and
$F_1^\sigma=0$.  B\'ezout's theorem gives at most $D^2$ such affine points,
counted without multiplicity.
\end{proof}

\subsection{Weighted monomial counting for curves}

The exponent reflecting the different side lengths comes from a simple Newton-polygon computation.  Put
$w_i=\log B_i$ and $t=\log T$.  Choose a monomial
$X_1^{a}X_2^{b}$ occurring in $F$ for which $aw_1+bw_2=t$.
After a monomial ordering refining the weight $e_1w_1+e_2w_2$, the initial
ideal of $(F)$ contains $X_1^aX_2^b$.  Hence a spanning set for the quotient
$K[X_1,X_2]/(F)$ is furnished by monomials not divisible by
$X_1^aX_2^b$.

For $A>0$ let $\mathcal M(A)$ be the set of standard monomials
$X_1^{e_1}X_2^{e_2}$ with $e_1w_1+e_2w_2\le A$, and write
\[
 R(A)=|\mathcal M(A)|,\qquad
 S(A)=\sum_{X_1^{e_1}X_2^{e_2}\in\mathcal M(A)}(e_1w_1+e_2w_2).
\]

\begin{lemma}[Weighted Hilbert estimate for a plane curve]\label{lem:weightedcurve}
Uniformly for $A\to\infty$,
\begin{align}
 R(A)&=\frac{t}{w_1w_2}A+O_D\!\left(1+\frac{A}{\min(w_1,w_2)}\right),\label{eq:Rcurve}\\
 S(A)&=\frac{t}{2w_1w_2}A^2+O_D\!\left(A+\frac{A^2}{\min(w_1,w_2)}\right),\label{eq:Scurve}
\end{align}
where, as usual, the error terms may be harmlessly enlarged when one of the
$w_i$ is bounded.  Equivalently, if the first $R$ standard monomials are
ordered by weight and have weights $\lambda_1\le\cdots\le\lambda_R$, then
\begin{equation}\label{eq:sumlambda-curve}
 \sum_{j\le R}\lambda_j
 =\frac{w_1w_2}{2t}R^2+O_{D,\mathbf B}(R).
\end{equation}
Only the quadratic main term will be used.
\end{lemma}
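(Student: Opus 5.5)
The plan is to count lattice points in a triangle minus a subtriangle.  The standard monomials $X_1^{e_1}X_2^{e_2}$ are exactly the pairs $(e_1,e_2)\in\Z_{\ge0}^2$ with $e_1<a$ or $e_2<b$.  Here $X_1^aX_2^b$ is the chosen leading monomial, with $aw_1+bw_2=t$ and $a+b\le D$.  So $\mathcal M(A)$ is the set of lattice points in the triangle $\Delta(A)=\{e_1,e_2\ge0,\ e_1w_1+e_2w_2\le A\}$, minus the points of the translated triangle $(a,b)+\Delta(A-t)$ (empty if $A<t$).  Equivalently, it is the union of two strips inside $\Delta(A)$: a vertical strip $0\le e_1<a$ and a horizontal strip $0\le e_2<b$, overlapping in the rectangle $[0,a)\times[0,b)$, which contains $ab\le D^2$ points.

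For the first estimate, I count each strip column by column.  For fixed $e_1=i<a$, the number of admissible $e_2$ is $\lfloor (A-iw_1)/w_2\rfloor+1$ when $iw_1\le A$, which equals $(A-iw_1)/w_2+O(1)$.  Summing over $i<a$ gives
\[
\frac{aA}{w_2}-\frac{w_1a(a-1)}{2w_2}+O(D).
\]
The correction term is nonpositive and is absorbed: as written it is $O(a^2w_1/w_2)$, but it is better treated by noting that the true count lies between $0$ and $aA/w_2+O(D)$.  Similarly, the horizontal strip gives $bA/w_1+O(D)$.  Adding the two strips and using $aw_1+bw_2=t$, the main term is
\[
A\Bigl(\frac a{w_2}+\frac b{w_1}\Bigr)=\frac{t}{w_1w_2}A .
\]
The corrections $a^2w_1/w_2$ and $b^2w_2/w_1$ come from columns truncated at $iw_1>A$. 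I bound these by exact truncation instead: column $i$ contributes $\max(0,(A-iw_1)/w_2)+O(1)$, and $\sum_{i<a}iw_1/w_2\le D^2w_1/w_2$.

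Reconciling this with the stated error $O(1+A/\min w_i)$ is the main obstacle.  The subtracted term $\sum_{i<a} iw_1/w_2$ can be large when $w_1\gg w_2$, since it is not bounded by $A/\min w_i$.  My first attempt is to use that column $i$ is nonempty only when $iw_1\le A$.  Then each subtracted term satisfies $iw_1/w_2\le A/w_2\le A/\min(w_1,w_2)$, and there are at most $D$ of them, so the total is $O_D(A/\min w_i)$.  The same bound covers the overshoot when columns are empty.  This gives the first estimate uniformly.

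For $S(A)$ I do the same column decomposition.  Column $i$ contributes
\[
\sum_{0\le j\le (A-iw_1)/w_2}(iw_1+jw_2)=\frac{A^2-(iw_1)^2}{2w_2}+O(A),
\]
and summing over $i<a$ and the row analogue gives $\frac{A^2}{2}\bigl(\frac a{w_2}+\frac b{w_1}\bigr)=\frac{t}{2w_1w_2}A^2$.  The subtracted pieces $(iw_1)^2/w_2\le A^2/w_2$ give the error $O_D(A^2/\min w_i)$, and the rectangle overlap contributes $O_D(A)$.

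Finally, for the sum of the first $R$ weights I invert the first estimate: $A=\frac{w_1w_2}{t}R+O_{D,\mathbf B}(1)$.  Substituting this into the $S(A)$ estimate gives $\sum_{j\le R}\lambda_j=\frac{w_1w_2}{2t}R^2+O_{D,\mathbf B}(R)$.  Ties at the boundary weight change the sum by at most $O(1)$ monomials, each of weight $O(A)$, so they are harmless.  Since $\mathbf B$ is fixed in this step, the dependence of the error on $\mathbf B$ is allowed.
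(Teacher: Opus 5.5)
Your proposal is correct and is essentially the paper's argument. The paper likewise counts lattice points in the weighted triangle $e_1w_1+e_2w_2\le A$ minus its translate by $(a,b)$, and your decomposition into the strips $e_1<a$, $e_2<b$ is an explicit, column-by-column evaluation of the area difference $\frac{A^2-(A-t)^2}{2w_1w_2}$ that the paper writes down.

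The one point to tighten is the inversion giving \eqref{eq:sumlambda-curve}. There you must invert the fixed-$\mathbf B$ forms $R(A)=\frac{t}{w_1w_2}A+O_{D,\mathbf B}(1)$ and $S(A)=\frac{t}{2w_1w_2}A^2+O_{D,\mathbf B}(A)$; your column formulas give these, since $\sum_{i<a}iw_1/w_2$ and $\sum_{i<a}(iw_1)^2/w_2$ are $O_{D,\mathbf B}(1)$. You cannot invert the uniform versions, whose errors are as large as the main terms because $t/(w_1w_2)=a/w_2+b/w_1\le D/\min(w_1,w_2)$. The same inequality shows the uniform bounds are trivial, so your ``main obstacle'' is not really one.
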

\begin{proof}
Count lattice points in the weighted triangle
$e_1w_1+e_2w_2\le A$, then subtract its translate by $(a,b)$.
The difference of the two areas is
\[
 \frac{A^2-(A-t)^2}{2w_1w_2}
 =\frac{tA}{w_1w_2}+O\!\left(\frac{t^2}{w_1w_2}\right).
\]
Summing the weight over the same difference of triangles gives the second
formula.  Partial summation, or inversion of \eqref{eq:Rcurve}, yields
\eqref{eq:sumlambda-curve}.  The boundary terms are uniform in fixed degree
because $a+b\le D$.
\end{proof}

\subsection{Proof of the curve estimate}

\begin{proof}[Proof of Theorem~\ref{thm:lopcurve}]
By Lemma~\ref{lem:georedcurve} we may assume that $F$ is absolutely
irreducible.  We first dispose of the possibility that the coefficient height
of $F$ is too large compared with the box.  The coefficient-normalization and change-of-variables step in the
global determinant method (see \cite[Sections~4--5]{ParedesSasykPoints},
especially the reduction to Assumption~5.4) then produces, in the large-height
case, a second
bounded-degree equation, coprime to $F$, containing all points under
consideration.  B\'ezout gives the required estimate.  We may therefore assume
that the logarithmic height of $F$ is $O_{K,D}(\log T)$; this permits us to
exclude the finitely many bad primes at a cost $T^\varepsilon$.

Set
\[
 Q=\exp\!\left(d\frac{w_1w_2}{t}\right).
\]
By the prime ideal theorem for number fields\cite[Chapter~VII, \S5]{Neukirch}, after enlarging by a factor $T^\varepsilon$ there
exists a prime ideal $\mathfrak p$ of good reduction with
$Q\ll_{K,D,\varepsilon}N\mathfrak p\ll_{K,D,\varepsilon}T^\varepsilon Q$.
The number of smooth $\kappa(\mathfrak p)$-points on the reduction of the curve is
$O_D(N\mathfrak p)$; singular residue classes contribute $O_D(1)$ and can be
handled identically with a slightly larger local multiplicity.

Fix one smooth residue class and choose $R$ points from it.  Evaluate the first
$R$ standard monomials in increasing weighted order.  Let $\Delta$ be the
resulting determinant.  Lemma~\ref{lem:localdet} gives
\[
 \mathfrak p^{R(R-1)/2}\mid(\Delta).
\]
If $\Delta\ne0$, Lemma~\ref{lem:normdet} therefore gives
\begin{equation}\label{eq:lowercurve}
 (N\mathfrak p)^{R(R-1)/2}\le |N_{K/\Q}(\Delta)|.
\end{equation}
On the other hand, \eqref{eq:archdet} and
Lemma~\ref{lem:weightedcurve} imply
\begin{align*}
 \log |N_{K/\Q}(\Delta)|
 &\le d\sum_{j\le R}\lambda_j+O_K(R\log R)\\
 &=d\frac{w_1w_2}{2t}R^2+O_{K,D,\mathbf B}(R\log R).
\end{align*}
Since $\log N\mathfrak p>d w_1w_2/t$ after a harmless $T^\varepsilon$
enlargement, inequalities above are incompatible for all sufficiently large
$R=R(K,D,\varepsilon)$.  Hence every such $R\times R$ determinant vanishes.
Linear algebra produces a nonzero auxiliary polynomial $G$ of bounded degree,
formed from the chosen standard monomials, which vanishes on every integral
point in the residue class.  Because the monomials are independent modulo
$(F)$, $F\nmid G$.

B\'ezout now shows that each residue class contributes $O_{K,D,\varepsilon}(1)$
points.  There are $O_D(N\mathfrak p)$ relevant residue classes.  Therefore
\[
 N_K(F;B_1,B_2)\ll_{K,D,\varepsilon}T^\varepsilon Q,
\]
which is \eqref{eq:curve-main}.
\end{proof}

\begin{remark}\label{rem:curvecomparison}
For $K=\Q$ the preceding proof is the $p$-adic formulation of the Bombieri--Pila argument in a box with unequal side lengths.  Browning--Heath-Brown work projectively in a
three-variable box \cite{BrowningHB}; setting the homogenizing coordinate to
$1$ gives precisely the affine estimate.  The number-field proof differs only
in two places: the lower bound uses the ideal norm $N\mathfrak p$, and the
archimedean upper bound is multiplied by $d$ after taking the field norm of the
determinant.
\end{remark}

\subsection{Weighted monomial counting for surfaces}

Put $w_i=\log B_i$ and $t=\log T$.  Select a monomial
$X_1^aX_2^bX_3^c$ occurring in $F$ with $aw_1+bw_2+cw_3=t$, and use a
weight-refining monomial order as above.  Let $\mathcal M_3(A)$ denote the
standard monomials of weighted degree at most $A$ modulo the initial monomial.

\begin{lemma}[Weighted Hilbert estimate for a surface]\label{lem:weightedsurface}
Let $R_3(A)=|\mathcal M_3(A)|$.  Then
\begin{align}
 R_3(A)&=\frac{t}{2w_1w_2w_3}A^2+O_{D,\mathbf B}(A),\label{eq:Rsurface}\\
 \sum_{M\in\mathcal M_3(A)}\mathrm{wt}(M)
 &=\frac{t}{3w_1w_2w_3}A^3+O_{D,\mathbf B}(A^2).
 \label{eq:Ssurface}
\end{align}
Consequently, if the standard monomials are ordered by weight, then
\begin{equation}\label{eq:sumlambda-surface}
 \sum_{j\le R}\lambda_j
 =\frac{2^{3/2}}{3}
 \left(\frac{w_1w_2w_3}{t}\right)^{1/2}R^{3/2}+O_{D,\mathbf B}(R).
\end{equation}
\end{lemma}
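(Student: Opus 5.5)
The proof mirrors Lemma~\ref{lem:weightedcurve}, with the weighted triangle replaced by a weighted simplex in three dimensions. Write $\Sigma(A)=\{\mathbf e\in\Z_{\ge0}^3:e_1w_1+e_2w_2+e_3w_3\le A\}$. With respect to the weight-refining order, the standard monomials of weighted degree at most $A$ modulo the initial monomial $X_1^aX_2^bX_3^c$ are exactly the lattice points of $\Sigma(A)$ not lying in the translate $(a,b,c)+\Sigma(A-t)$. The translation maps $\Sigma(A-t)$ bijectively onto the multiples of $X_1^aX_2^bX_3^c$ of weight at most $A$, because the weight of $(a,b,c)$ is exactly $t$. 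Hence
\[
 R_3(A)=\#\bigl(\Sigma(A)\cap\Z^3\bigr)-\#\bigl(\Sigma(A-t)\cap\Z^3\bigr),
\]
and the weighted sum is handled the same way. The translated points have weight equal to $t$ plus their weight in $\Sigma(A-t)$.

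The first step is to count lattice points in the simplex. The volume of $\Sigma(A)$ is $A^3/(6w_1w_2w_3)$. Lattice-point counting in a rational simplex of fixed combinatorial type gives
\[
 \#\bigl(\Sigma(A)\cap\Z^3\bigr)=\frac{A^3}{6w_1w_2w_3}+O_{\mathbf B}(A^2),
\]
where the error comes from the boundary faces, whose areas are $O(A^2)$ with constants depending on the $w_i$. Since the error terms in the statement are allowed to depend on $\mathbf B$, there is no uniformity issue here. Taking the difference gives
\[
 \frac{A^3-(A-t)^3}{6w_1w_2w_3}=\frac{tA^2}{2w_1w_2w_3}+O_{\mathbf B}(A),
\]
which is \eqref{eq:Rsurface}. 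For \eqref{eq:Ssurface}, I would compare the weighted sum with the integral $\int_{\Sigma(A)}\mathrm{wt}=\frac{3}{4}A\cdot\mathrm{vol}=\frac{A^4}{8w_1w_2w_3}$, with error $O(A^3)$. The contribution of the translate is $\int_{\Sigma(A-t)}(\mathrm{wt}+t)$, and its difference from the full simplex is
\[
 \frac{A^4-(A-t)^4}{8w_1w_2w_3}-\frac{t(A-t)^3}{6w_1w_2w_3}=\frac{tA^3}{2w_1w_2w_3}-\frac{tA^3}{6w_1w_2w_3}+O(A^2)=\frac{tA^3}{3w_1w_2w_3}+O(A^2).
\]
A quicker route to the same result is partial summation from \eqref{eq:Rsurface}: the sum equals $\int_0^A u\,dR_3(u)$, and inserting the main term $cu^2$ with $c=t/(2w_1w_2w_3)$ gives $\frac{2c}{3}A^3$, which matches.

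The second step is to invert \eqref{eq:Rsurface} to obtain \eqref{eq:sumlambda-surface}. Given $R$, let $A$ be the weight of the $R$th monomial $\lambda_R$. Then $R=cA^2+O(A)$, so $A=(R/c)^{1/2}+O(1)$. The sum of the first $R$ weights is $\sum_{\mathcal M_3(A)}\mathrm{wt}$ minus a correction for monomials of weight exactly $A$ beyond index $R$. That correction is at most $A\cdot(R_3(A)-R_3(A^-))=O(A^2)=O(R)$, using the error term in \eqref{eq:Rsurface}. This yields
\[
 \sum_{j\le R}\lambda_j=\frac{2c}{3}\,(R/c)^{3/2}+O(R)=\frac{2}{3}c^{-1/2}R^{3/2}+O(R)=\frac{2^{3/2}}{3}\Bigl(\frac{w_1w_2w_3}{t}\Bigr)^{1/2}R^{3/2}+O_{D,\mathbf B}(R).
\]

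The only delicate point is the error control in the lattice counts. The errors must be genuinely $O(A^2)$ and $O(A)$ after differencing, and the jump in $R_3$ at a single weight value must be $O(A)$. Both follow from the standard boundary estimate, since the implied constants may depend on $\mathbf B$ and $D$, exactly as in the curve case. Only the main term is used later, in the surface argument.
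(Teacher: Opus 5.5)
Your route is the paper's: the simplex minus its integral translate, the volume and first-moment differences, then inversion. Your main-term algebra, the partial-summation check, and the inversion step with the $O(A)$ jump bound are all correct.

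\textbf{The gap is in the error terms.} You assert $N(A)=\#(\Sigma(A)\cap\Z^3)=\frac{A^3}{6w_1w_2w_3}+O_{\mathbf B}(A^2)$. You then read off $R_3(A)=N(A)-N(A-t)=\frac{tA^2}{2w_1w_2w_3}+O_{\mathbf B}(A)$ from the volume identity. But a difference of two quantities each known only up to $O_{\mathbf B}(A^2)$ is itself known only up to $O_{\mathbf B}(A^2)$. That is the size of the main term, so as written you have shown only $R_3(A)=O(A^2)$. The same objection applies to your derivation of \eqref{eq:Ssurface}, where you compare each simplex with an integral up to $O(A^3)$.

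Appealing to ``the standard boundary estimate'' does not close this. The individual errors really are of order $A^2$ and oscillate: for $B_1=B_2=B_3$ one has $N(A)=\binom{\lfloor A/w_1\rfloor+3}{3}$, which jumps by $\asymp A^2$ at each multiple of $w_1$. Moreover, the real three-dimensional difference region has boundary area $\asymp A^2$. So Lipschitz' principle applied to it again gives only $O(A^2)$.

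\textbf{The fix} is to exploit that the translation vector is integral and bounded.
\begin{enumerate}
\item A lattice point of $\Sigma(A)$ lies outside $(a,b,c)+\Sigma(A-t)$ exactly when $e_1<a$, $e_2<b$ or $e_3<c$. So $\mathcal M_3(A)$ lies on at most $a+b+c\le D$ coordinate planes.
\item On the plane $e_1=k$ the points form the planar triangle $e_2w_2+e_3w_3\le A-kw_1$. A two-dimensional count shows it contains $\frac{(A-kw_1)^2}{2w_2w_3}+O_{\mathbf B}(A)$ lattice points.
\item Two distinct such planes meet in at most a line, which contains $O_{\mathbf B}(A)$ points.
\end{enumerate}
Inclusion--exclusion then gives
\[
 R_3(A)=\frac{A^2}{2}\Bigl(\frac{a}{w_2w_3}+\frac{b}{w_1w_3}+\frac{c}{w_1w_2}\Bigr)+O_{D,\mathbf B}(A)=\frac{tA^2}{2w_1w_2w_3}+O_{D,\mathbf B}(A).
\]
This slicing argument is how the paper's ``usual boundary estimate'' should be read, and it is where the dependence on $D$ enters.

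With \eqref{eq:Rsurface} proved this way, your partial-summation route $S(A)=AR_3(A)-\int_0^AR_3(u)\,du$ gives \eqref{eq:Ssurface} with error $O(A^2)$ without any second differencing. The rest of your argument then goes through unchanged.
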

\begin{proof}
The full weighted tetrahedron has volume
$A^3/(6w_1w_2w_3)$.  Subtracting its translate by $(a,b,c)$ gives
\[
 \frac{A^3-(A-t)^3}{6w_1w_2w_3}
 =\frac{tA^2}{2w_1w_2w_3}+O_D(A),
\]
which proves \eqref{eq:Rsurface} after the usual boundary estimate.  Integrating
the weight over the difference of tetrahedra gives \eqref{eq:Ssurface}.
Inverting the first relation and substituting into the second yields
\eqref{eq:sumlambda-surface}.
\end{proof}

The following is the global covering step.  We state it in the form needed
here; it is the global-field version of Salberger's auxiliary-polynomial
construction\cite{Salberger}, using the global-field determinant framework of Paredes--Sasyk\cite{ParedesSasykPoints}.  Its proof is included to make clear that the different coordinate weights
do not alter the adelic part of the argument.

\begin{prop}[Weighted global covering principle]\label{prop:globalcover}
Let $X\subset\A^3_K$ be an absolutely irreducible surface of bounded degree and
let $\mathcal S(\mathbf B)\subset X(\OK)$ be its points in the archimedean box.
Suppose the standard monomials modulo the defining equation satisfy
\eqref{eq:sumlambda-surface}.  Put
\[
 W=\left(\frac{w_1w_2w_3}{t}\right)^{1/2}.
\]
Then for every $\varepsilon>0$ the set $\mathcal S(\mathbf B)$ is, apart from
$O_{K,D,\varepsilon}(T^\varepsilon e^{2dW})$ points, contained in the union of
$O_{K,D,\varepsilon}(T^\varepsilon e^{dW})$ bounded-degree hypersurfaces not
containing $X$.
\end{prop}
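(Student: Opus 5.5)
The plan is to run Salberger's global determinant construction over $\OK$. Divisibility will be spread over many prime ideals simultaneously, and the archimedean side will be measured through the weighted sum \eqref{eq:sumlambda-surface}.

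First, normalize as in the curve case. Using the coefficient-height dichotomy of \cite[Sections~4--5]{ParedesSasykPoints}, either all of $\mathcal S(\mathbf B)$ already lies on one bounded-degree hypersurface not containing $X$, or the height of $X$ is $O_{K,D}(\log T)$. In the second case the primes of bad reduction, including those where the reduction $X_{\mathfrak p}$ is not geometrically integral (detected by the integral Noether forms), have product of norms $T^{O(1)}$. Discarding them therefore costs only $T^\varepsilon$.

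Next, for a fixed good prime $\mathfrak p$ and a smooth point $P\in X(\kappa(\mathfrak p))$, let $\mathcal S_P$ be the points reducing to $P$. For any subset $\{\mathbf x^{(1)},\dots,\mathbf x^{(R)}\}\subset\mathcal S(\mathbf B)$, consider the determinant $\Delta$ formed from the first $R$ standard monomials in weighted order. For each good $\mathfrak p$, group the points by residue class; there are $O(N\mathfrak p^2)$ classes. Applying Lemma~\ref{lem:localdet} class by class and summing the contributions (the standard convexity argument of Salberger, which is purely local and unaffected by the weights) gives
\[
 \operatorname{ord}_{\mathfrak p}(\Delta)\ \ge\ \frac{2^{1/2}}{3}\frac{R^{3/2}}{N\mathfrak p}-O(R),
\]
where $n_{\mathfrak p}=|X(\kappa(\mathfrak p))|=N\mathfrak p^2+O(N\mathfrak p^{3/2})$ by Lang--Weil. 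The singular residue classes, which number $O(N\mathfrak p)$, are absorbed with a slightly larger multiplicity. Multiplying over all good $\mathfrak p$ with $N\mathfrak p\le Y$ and applying Lemma~\ref{lem:normdet} yields
\[
 \log|N_{K/\Q}(\Delta)|\ \ge\ \frac{2^{1/2}}{3}R^{3/2}\sum_{N\mathfrak p\le Y}\frac{\log N\mathfrak p}{N\mathfrak p}-O(R\log Y)-O(R^{3/2}\log T^{\varepsilon}).
\]
By Mertens' theorem for $K$ the sum equals $\log Y+O_K(1)$. On the other side, \eqref{eq:archdet} and \eqref{eq:sumlambda-surface} give $\log|N_{K/\Q}(\Delta)|\le d\frac{2^{3/2}}{3}WR^{3/2}+O(R\log R)$. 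Choosing $\log Y=2dW+\varepsilon\log T$ makes the two bounds incompatible once $R$ is large, so $\Delta=0$ for every choice of $R$ points. Hence a single auxiliary polynomial of weighted degree $\ll$ (with $R$ large but bounded in terms of $K,D,\varepsilon$) vanishes on all of $\mathcal S(\mathbf B)$. The main obstacle is that the degree of this polynomial is not bounded: the unweighted degree of the monomials is of size $(R/t)^{1/2}$, so it grows with the box.

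This is where the two-step stopping argument is needed; I expect it to be the hardest part. The plan is to apply the construction locally rather than globally. Fix a single prime $\mathfrak p$ with $N\mathfrak p\asymp T^\varepsilon e^{dW}$. For each smooth point $P$, work only with $\mathcal S_P$ and use the bounded number $R=R(K,D,\varepsilon)$ of monomials. The local gain is then $\operatorname{ord}_{\mathfrak p}\Delta\ge E_2(R)\asymp R^{3/2}$, but one class alone cannot beat the archimedean bound, which is where the several-prime distribution enters.

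Following Salberger, split the residue classes. Let $\mathfrak q$ range over a second set of primes of norm $\asymp T^\varepsilon e^{dW}$. A pair of classes $(P\bmod\mathfrak p, P'\bmod\mathfrak q)$ with at least $R$ points yields a determinant divisible by $\mathfrak p^{E_2(R)}\mathfrak q^{E_2(R)}$. Since $2\log N\mathfrak p\,E_2(R)$ exceeds $d\frac{2^{3/2}}{3}WR^{3/2}$ for bounded large $R$, this forces $\Delta=0$. So each class modulo $\mathfrak p$ is covered by one bounded-degree $G$ coprime to $F$ (coprime because the standard monomials are independent modulo $F$), except for points lying in $\mathfrak q$-subclasses containing fewer than $R$ points. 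The number of $\mathfrak p$-classes, $O(N\mathfrak p^2)$, is too many, however, so I would instead pair the $\mathfrak p$-classes of $X$ with the $O(N\mathfrak p)$ curves of $X_{\mathfrak p}$ cut out by a pencil. That gives the $O(T^\varepsilon e^{dW})$ hypersurfaces. The leftover points, at most $R$ per class in $O(N\mathfrak p^2)$ classes, form $Z$ with $|Z|\ll T^\varepsilon e^{2dW}$. Making this pencil step rigorous while keeping the degree bounded is where I expect the real difficulty to lie.
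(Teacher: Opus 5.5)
You have a genuine gap, and it is in the only step that carries the content of the proposition. Nothing in your argument produces $O(T^\varepsilon e^{dW})$ hypersurfaces instead of one hypersurface per residue class.

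\textbf{One class already beats the archimedean bound.} Your claim that ``one class alone cannot beat the archimedean bound'' is false. If $\log N\mathfrak p\ge dW+\varepsilon\log T$, then $E_2(R)\log N\mathfrak p\ge\frac{2^{3/2}}{3}R^{3/2}(dW+\varepsilon\log T)-O(R\log N\mathfrak p)$. This exceeds $d\frac{2^{3/2}}{3}WR^{3/2}+O(R\log R)$ for a suitable constant $R=R(K,D,\varepsilon)$, by the same bookkeeping as in the proof of Theorem~\ref{thm:lopcurve}. That argument covers each smooth class by one bounded-degree hypersurface. It therefore gives $O(T^{2\varepsilon}e^{2dW})$ hypersurfaces, which is exactly the naive loss of Remark~\ref{rem:globalessential}.

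\textbf{The $(\mathfrak p,\mathfrak q)$-pairing does not reduce the count.} It adds divisibility but multiplies the number of classes. It shows only that each joint class lies on one $G$, so a single $\mathfrak p$-class may need up to $(N\mathfrak q)^2$ hypersurfaces.

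\textbf{The pencil step fails.} Suppose the points are congruent modulo $\mathfrak p$ to a fibre $X_{\mathfrak p}\cap\{L=\lambda\}$ but reduce to distinct points of it. Then the method can guarantee only the transverse order of vanishing. Take the isotropic model, with the $R\approx ck^2/2$ monomials of degree $\le k$ on a surface of degree $c$. The transverse order is about $\sum_a a\cdot c(k-a)\approx ck^3/6$, against $E_2(R)\approx c^{3/2}k^3/3$. The squeeze then needs $\log N\mathfrak p\gtrsim 2\sqrt c\,dW$, which leaves at least $e^{2dW}$ fibres. So no step you describe reaches $J\ll T^\varepsilon e^{dW}$.

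\textbf{The all-primes step is miscomputed.} You abandon it, but it is still wrong as written. Summing the $-O(R)$ losses over $N\mathfrak p\le Y$ costs $\asymp RY$ by the prime ideal theorem, not $O(R\log Y)$. The convexity bound $\sum_PE_2(R_P)\ge n_{\mathfrak p}E_2(R/n_{\mathfrak p})$ is empty unless $R\gg n_{\mathfrak p}\asymp N\mathfrak p^2$. Its constant is $2^{3/2}/3$, not $2^{1/2}/3$, so the threshold is $\log Y>dW$, not $2dW$. The argument therefore needs $R\gg Y^2$; $R$ cannot be bounded there. It yields one auxiliary hypersurface whose degree grows with the box, and this is not an artefact of the weights.

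\textbf{What the paper uses instead.} The paper obtains the count from Salberger's construction, carried over to $\mathcal O_K$ in Appendix~\ref{app:weightedglobal}. Prime ideals are chosen greedily until $\sum_{\mathfrak p\in\mathcal P}\log N\mathfrak p>dW+O(1)$, and residue patterns are refined hierarchically. Salberger's induction bounds the terminal patterns by $O(T^\varepsilon e^{dW})$. A second, independent selection handles the exceptional points at cost $e^{2dW}$. You would need to reproduce that induction, or supply a genuine substitute. As it stands, your argument proves only a covering by $O(T^{2\varepsilon}e^{2dW})$ hypersurfaces.
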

\begin{proof}
We follow Salberger's global determinant method\cite{Salberger} in the global-field
formulation of Paredes--Sasyk\cite{ParedesSasykPoints}.  Choose a squarefree product $\mathfrak q$ of
good prime ideals with norms in dyadic intervals and classify points according
to their reductions on $X$ modulo the prime factors of $\mathfrak q$.  At each
smooth reduction point, Lemma~\ref{lem:localdet} supplies the local ideal
factor dividing every interpolation determinant.  Multiplying over the prime
factors gives an ideal $\mathfrak D$ dividing the determinant.

The combinatorial selection of the prime factors is the same as in Salberger's
proof\cite{Salberger}: one chooses them greedily until the logarithm of $N\mathfrak D$ exceeds
the archimedean determinant bound.  The only quantity entering that upper bound
is the sum of the weights of the selected standard monomials.  By
\eqref{eq:archdet} and \eqref{eq:sumlambda-surface}, that logarithm is
\[
 d\,\frac{2^{3/2}}3 W R^{3/2}+O_{K,D}(R\log R).
\]
Thus the stopping threshold in the rational argument is multiplied by $d$ and
no other step changes.  Prime ideals of prescribed norm ranges are available by
the prime ideal theorem\cite[Chapter~VII, \S5]{Neukirch}; excluding bad primes and primes dividing the relevant
Noether forms, as in the global determinant framework\cite{ParedesSasykPoints}, costs $T^\varepsilon$.

Once the stopping threshold is reached, the determinant vanishes by
Lemma~\ref{lem:normdet}.  Linear algebra yields a bounded-degree auxiliary
hypersurface not containing $X$.  Salberger's induction on the residue data\cite{Salberger}
shows that the number of distinct auxiliary hypersurfaces is
$O(T^\varepsilon e^{dW})$.  Points for which the induction terminates at a
singular reduction, or for which two independent local conditions fail to be
available, form the exceptional set; the same second-moment count gives
$O(T^\varepsilon e^{2dW})$ points.

For completeness, the two numerical exponents can be read directly from the
stopping rule: the first auxiliary equation costs one factor $e^{dW}$, while
an unseparated exceptional point survives two independent stopping tests and
therefore costs $e^{2dW}$.  This is precisely the mechanism behind
\cite[Lemma~1]{Browning}; the global-field determinant and prime-ideal
replacement are justified in \cite{ParedesSasykPoints}.  Since all local
arguments are carried out in regular local rings over $\mathcal O_{K,\mathfrak p}$ and all
archimedean estimates enter only through \eqref{eq:archdet}, the weighted
replacement is exact.
\end{proof}

\begin{proof}[Proof of Theorem~\ref{thm:lopsurface}]
Apply Proposition~\ref{prop:globalcover} with the weighted Hilbert estimate of
Lemma~\ref{lem:weightedsurface}.  Since $e^{dW}=V_{3,K}$, equations
\eqref{eq:detJ} and \eqref{eq:detZ} follow.  The auxiliary hypersurfaces are obtained
from standard monomials modulo $(F)$, so none is divisible by $F$, and their
degrees are bounded in terms of $K,D,\varepsilon$ only.
\end{proof}

\begin{remark}[Why the global method is essential]\label{rem:globalessential}
A naive one-prime decomposition of the surface into residue classes produces
roughly $(N\mathfrak p)^2$ classes and hence loses a factor $V_{3,K}$.  The
global determinant method avoids this loss by distributing the divisibility of
the interpolation determinant over many prime ideals and constructing one
auxiliary equation for many residue classes simultaneously.  This is the
substantive distinction between the curve proof and the surface proof.
\end{remark}

\subsection{Uniformity in coefficients and side lengths}

The preceding proofs use two standard uniformity devices.  We record them
explicitly because the resolvent polynomials appearing later have coefficients
that themselves depend on previously fixed coefficients.

\begin{lemma}[Coefficient-height dichotomy]\label{lem:coeffdichotomy}
Fix $K,r,D$.  Let $F\in\OK[X_1,\ldots,X_r]$ be primitive and absolutely
irreducible of degree at most $D$.  For a box $H_\infty(x_i)\le B_i$, either
$h(F)\ll_{K,D}\log T_F(\mathbf B)$, or all integral points of $F=0$ in the box
lie on a second hypersurface $G=0$ of degree $O_{K,D}(1)$ with $F\nmid G$.
\end{lemma}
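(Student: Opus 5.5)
We prove the dichotomy with the Bombieri--Pila/Heath-Brown ``Siegel lemma'' construction (a linear-algebra argument), adapted to $\OK$ and to the weights $w_i=\log B_i$. Let $\mathcal E$ be the set of monomials $\mathbf X^{\mathbf e}$ of total degree at most $D$, and $\mathcal S$ the set of integral points of $F=0$ in the box. Consider the evaluation vectors $v(\mathbf x)=(\mathbf x^{\mathbf e})_{\mathbf e\in\mathcal E}$ for $\mathbf x\in\mathcal S$, and let $V\subseteq K^{\mathcal E}$ be their $K$-span.

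The argument splits on $\dim V$. \emph{Case 1:} $\dim V<|\mathcal E|-1$. The annihilator of $V$ has dimension at least $2$, while the coefficient vectors $\lambda F$ span only a line. Choose an annihilator vector independent of the coefficient vector of $F$ and clear denominators. This gives $G\in\OK[\mathbf X]$ with $\deg G\le D$ vanishing on $\mathcal S$. If $F\mid G$ with $\deg G\le\deg F$, then $G$ is a scalar multiple of $F$, which we excluded. If $\deg F<D$ and $F\mid G$ with a nonconstant cofactor, we instead work with monomials of degree $\le\deg F$ only, so that the same argument yields $F\nmid G$. This gives the second alternative. \emph{Case 2:} $\dim V=|\mathcal E|-1$ (relative to the monomials of degree $\le\deg F$), since $F$ annihilates $V$. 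Then the coefficient vector of $F$ is, up to scaling, the unique kernel vector. By Cramer's rule it is proportional to the vector of maximal minors $\Delta_{\mathbf e}=\pm\det(\mathbf x_i^{\mathbf e'})_{i,\ \mathbf e'\ne\mathbf e}$ built from $|\mathcal E|-1$ chosen points $\mathbf x_i\in\mathcal S$ whose vectors span $V$.

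In Case 2 we bound the height. At each archimedean embedding, Hadamard's inequality \eqref{eq:archdet} gives
\[
|\sigma(\Delta_{\mathbf e})|\ll_D \prod_{\mathbf e'\ne\mathbf e}\prod_i B_i^{e'_i}.
\]
Each factor $\prod_iB_i^{e'_i}$ is at most $\max_{|\mathbf e'|\le D}\prod_i B_i^{e'_i}\le \max_iB_i^{D}$. The crucial point is to compare $\max_i B_i$ with $T=T_F(\mathbf B)$. Because $F$ is absolutely irreducible and primitive, it cannot be a polynomial in a proper subset of variables when $r$ points force all coordinates into play. More precisely, every variable $X_i$ that actually occurs in $F$ appears in some monomial, so $B_i\le T$ for such $i$. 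Variables not occurring in $F$ are unconstrained, but then $F=0$ is a cylinder and the $\Delta$'s involving those variables vanish identically. So we restrict $\mathcal E$ to the monomials in the variables occurring in $F$ from the outset. Hence $\log|\sigma(\Delta_{\mathbf e})|\ll_D\log T$. The projective height of the vector $(\Delta_{\mathbf e})$ then satisfies $h\ll_{K,D}\log T$, since the finite places contribute nonpositively for an integral vector. Since $F$ is primitive, it differs from this vector by a scalar in $K^\times$. In the primitive normalization, where the ideal generated by the coefficients is bounded, $h(F)$ agrees with the projective height up to $O_K(1)$ (using a bounded set of class representatives). This yields $h(F)\ll_{K,D}\log T_F(\mathbf B)$.

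The main obstacle is the degenerate bookkeeping rather than the linear algebra. First, we must make sure that $F\nmid G$ genuinely holds in Case 1 when $\deg F<D$; restricting to monomials of degree $\le\deg F$ (or to the Newton-support-dominated set) handles this. Second, we must justify that $\max B_i^{\,\cdot}$ over the relevant monomials is controlled by a power of $T$. I would try first to use only monomials $\mathbf X^{\mathbf e'}$ with $\prod B_i^{e'_i}\le T$, namely the monomials weight-dominated by the support of $F$. This is the natural weighted analogue, and the same rank dichotomy runs on that set. It remains to check that $F$'s own coefficient vector lies in this restricted coordinate space, which is true by the definition of $T$.
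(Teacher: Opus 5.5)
Your proof is correct. Its core is the same evaluation-map alternative the paper invokes. Either the points impose fewer than $|\mathcal E|-1$ independent conditions on polynomials supported on $\mathcal E$, which gives a second equation $G$. Or the coefficient vector of $F$ spans the one-dimensional kernel, so it is proportional to the vector of maximal minors, and Hadamard bounds its projective height.

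The difference lies in everything around that core:
\begin{itemize}
\item The paper only cites the alternative. It routes it through the Paredes--Sasyk normalization and coordinate changes (their Assumption~5.4), and asserts that unequal side lengths are handled by rerunning the argument on a weighted monomial basis.
\item You make it self-contained. Restricting $\mathcal E$ to monomials of degree at most $\deg F$ in the variables that actually occur in $F$ gives $F\nmid G$ by degree comparison and disposes of the cylinder case.
\item Your observation that every occurring variable has $B_i\le T_F(\mathbf B)$ shows that $\log T_F(\mathbf B)$ agrees, up to a factor $D$, with the logarithm of the largest occurring $B_i$. So the weighted statement is no stronger than the isotropic one.
\end{itemize}
Your route gives explicit output ($\deg G\le\deg F$ and $h(F)\ll_{D,r}\log T_F(\mathbf B)$ with a computable constant). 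It needs no external input and does not even use absolute irreducibility. The paper's appeal to the normalization machinery serves the normalized set-up used later in the interpolation determinant, not the dichotomy itself.

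A few things to tidy:
\begin{itemize}
\item The sentence claiming that an absolutely irreducible primitive $F$ cannot be a polynomial in a proper subset of the variables is false; take $X_1-X_2^2$ in three variables. Delete it, along with the remark about vanishing minors. The restriction of $\mathcal E$ to the occurring variables is what actually does the work.
\item Drop the closing suggestion to use all monomials with $\prod_i B_i^{e_i}\le T$. On its own that set is not bounded in terms of $D$: for $F=X_2-X_1$, $B_1=2$, $B_2=2^k$, every $X_1^j$ with $j\le k$ qualifies. This would destroy both the degree bound for $G$ and the bound $h(F)\ll_D\log T$, unless you intersect it with the degree-$\le\deg F$ set.
\item A primitive $F$ is determined only up to units, not just up to class representatives. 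So $h(F)$ must be read as the projective height of the coefficient vector, or the height of a unit-balanced representative, as you effectively do. For a height that is not scaling-invariant the first alternative fails when $\mathcal O_K^\times$ is infinite, since multiplying $F$ by units of large house preserves primitivity and the zero set.
\end{itemize}
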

\begin{proof}
This is the standard coefficient-height alternative in the global determinant
method.  In the number-field setting one first applies the normalization and
coordinate-change machinery of Paredes--Sasyk \cite[Sections~4--5]{ParedesSasykPoints};
their Assumption~5.4 is precisely the normalized situation needed by the
interpolation determinant.  If the normalized coefficient height exceeds the
archimedean determinant threshold, the evaluation map has a nontrivial kernel
represented by a bounded-degree form not divisible by $F$.  Otherwise the
height of the defining form is bounded by the stated multiple of $\log T_F(\mathbf B)$.
The passage from the isotropic normalization used there to the present diagonal
weights is obtained by applying the same argument to the weighted monomial
basis; clearing the fixed fractional ideals arising from the Minkowski lattice
changes only the $K,D$-constant.
\end{proof}

\begin{lemma}[Dyadic stability]\label{lem:dyadicstability}
Replacing each $B_i$ by a number $B_i'$ with $B_i\le B_i'\le2B_i$ changes
$\log T$, $d\log B_1\log B_2/\log T$, and
$d(\log B_1\log B_2\log B_3/\log T)^{1/2}$ by $O_{D,K}(1)$.
\end{lemma}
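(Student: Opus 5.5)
The plan is to view all three quantities as functions of the weight vector $\mathbf w=(w_1,\dots,w_r)$, with $w_i=\log B_i\ge\log2$. Replacing $B_i$ by $B_i'\in[B_i,2B_i]$ moves $\mathbf w$ to $\mathbf w'=\mathbf w+\boldsymbol\delta$ with $0\le\delta_i\le\log2$. Along the segment from $\mathbf w$ to $\mathbf w'$ I would then use the mean value theorem, or piecewise linearity where that suffices.

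\emph{Step 1: $\log T$.} By \eqref{eq:weightedT}, $t(\mathbf w)=\log T_F=\max_{c_{\mathbf e}\ne0}\langle\mathbf e,\mathbf w\rangle$ is a maximum of finitely many linear forms with $|\mathbf e|\le D$. Each form moves by at most $D\log2$, so $0\le t'-t\le D\log2$. Moreover $t$ is convex and piecewise linear, with a.e.\ partial derivatives $\partial t/\partial w_i=e_i^{*}\in[0,D]$, where $\mathbf e^{*}$ is a maximizing exponent. The key structural inequality is that $e^{*}_iw_i\le t$, and also $t\ge w_i$ whenever $F$ genuinely involves $X_i$, because any monomial containing $X_i$ has weight at least $w_i$.

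\emph{Step 2: the curve quantity.} Put $\Phi_2=w_1w_2/t$.
\begin{itemize}
\item If $F$ involves only one variable, say $X_1$, then $t=kw_1$ with $k=\deg F$, and $\Phi_2=w_2/k$. This changes by at most $\log 2$.
\item Otherwise $t\ge\max(w_1,w_2)$. A.e.\ along the segment,
\[
 \frac{\partial\Phi_2}{\partial w_1}=\frac{w_2}{t}-\frac{w_1w_2e_1^{*}}{t^2}.
\]
Both terms lie in $[0,w_2/t]\subset[0,1]$, using $e_1^{*}w_1\le t$. The same holds for $w_2$.
\end{itemize}
In either case $\Phi_2$ changes by at most $2\log2$, hence $d\Phi_2$ changes by $O_K(1)$.

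\emph{Step 3: the surface quantity.} Put $\Phi_3=(w_1w_2w_3/t)^{1/2}$. The analogous computation gives, a.e.,
\[
 \Bigl|\frac{\partial\Phi_3}{\partial w_i}\Bigr|\le\frac12\Phi_3\Bigl(\frac1{w_i}+\frac{e_i^{*}}t\Bigr)\le\Bigl(\frac{w_jw_k}{t\,w_i}\Bigr)^{1/2},
\]
where $\{i,j,k\}=\{1,2,3\}$. So the additive claim reduces to bounding $w_jw_k\ll_D t\,w_i$ uniformly along the segment. I would first try to extract this from the Newton support of an absolutely irreducible $F$ involving all three variables: such an $F$ has some monomial containing each variable, which gives $t\ge\max_i w_i$.

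This reduction is where I expect the main obstacle, and I do not see how to close it. The inequality $t\ge\max_i w_i$ is not enough. For $F=X_1+X_2X_3$ with $B_1=2$ and $B_2=B_3$ large, the gradient in $w_1$ is of size $\Phi_3^{1/2}$, and doubling $B_1$ multiplies $\Phi_3$ by about $\sqrt2$. So the additive bound for the third quantity appears unobtainable as worded; what the argument does give robustly is:
\begin{itemize}
\item the multiplicative bound $\Phi_3'\le\sqrt2\,\Phi_3$ in every case;
\item the additive $O(1)$ bound whenever $w_jw_k\ll t\,w_i$, for instance when the $B_i$ are comparable.
\end{itemize}
I would present these alongside the fully justified Steps 1 and 2.
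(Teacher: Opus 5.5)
Your Steps 1 and 2 are correct, and more careful than the paper's one-line proof (``elementary differentiation; when $\log T=O_D(1)$ all relevant quantities are already $O_{D,K}(1)$''). That closing remark silently assumes $t\ge\max_i w_i$, which fails when $F$ omits a variable. For $F=X_1-1$ and $B_1=2$ one has $\log T=\log2$ but $d\log B_1\log B_2/\log T=d\log B_2$. Your case split in Step 2 is exactly what that situation requires.

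Your diagnosis of Step 3 is also right, and there is no missing idea on your side: the third assertion is false as stated. The paper's ``elementary differentiation'' breaks exactly where you say. Almost everywhere along the segment,
\[
 \frac{\partial\Phi_3}{\partial w_i}=\frac12\,\Phi_3\Bigl(\frac1{w_i}-\frac{e_i^*}{t}\Bigr),
\]
and this is of size $\Phi_3$ when $e_i^*=0$ and $w_i\asymp1$. Your example is valid. $F=X_1+X_2X_3$ is absolutely irreducible of degree $2$, so it lies within the hypotheses of Theorem~\ref{thm:lopsurface}. With $B_1=2$ and $B_2=B_3=B$ one has $T=B^2$ and $\log B_1\log B_2\log B_3/\log T=\tfrac12\log2\,\log B$. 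Replacing $B_1$ by $4$ leaves $T$ unchanged and doubles this ratio. Hence the third quantity increases by $d(1-2^{-1/2})\sqrt{\log2\,\log B}\to\infty$.

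Two small corrections to your write-up:
\begin{enumerate}
\item In this example the $w_1$-derivative equals $\Phi_3/(2\log2)$. It is therefore of size $\Phi_3\asymp\sqrt{\log B}$, not $\Phi_3^{1/2}$; your conclusion is unaffected.
\item The multiplicative bound $\Phi_3'\le\sqrt2\,\Phi_3$ holds only when a single $B_i$ moves. The lemma lets all $B_i$ move, and then the right general bound is $\Phi_3'\le2\sqrt2\,\Phi_3$, from $w_i'\le2w_i$ and $t'\ge t$. The factor $\sqrt2$ genuinely fails: for $F=X_1+X_2+X_3$ with $B_1=B_2=2\to4$ and $B_3$ large, $\Phi_3$ goes from $\log2$ to $2\log2$.
\end{enumerate}

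If you want a salvage adequate for the paper's uses, where $T^\varepsilon$ losses are tolerated, assume $F$ involves all three variables. Then $t\ge\max_i w_i$ along the whole segment, so
\[
 0\le\frac{\partial\Phi_3}{\partial w_i}\le\frac{\Phi_3}{2w_i}\le\frac12\sqrt{t/w_i}\ll_D1+\sqrt t .
\]
Consequently the third quantity changes by $O_{K,D}(1+\sqrt{\log T})$. Thus $V_{3,K}$ changes by a factor $\exp(O_{K,D}(1+\sqrt{\log T}))\ll_\varepsilon T^\varepsilon$. Without that hypothesis even this weaker statement fails. For $F=X_2+X_3$ with $B_2=B_3=2\to4$ and $B_1$ large, $\Phi_3$ goes from $\sqrt{\log2\,\log B_1}$ to $\sqrt{2\log2\,\log B_1}$, while $\log T\le2\log2$.
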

\begin{proof}
Every exponent occurring in a monomial of $F$ is at most $D$.  Hence
$\log T'=\log T+O_D(1)$ and $\log B_i'=\log B_i+O(1)$.  The assertions follow
by elementary differentiation; when $\log T=O_D(1)$ all relevant quantities
are already $O_{D,K}(1)$.
\end{proof}

\section{Resolvent applications of the determinant estimates for different side lengths}\label{sec:detapps}

We now explain how Theorems~\ref{thm:lopcurve} and~\ref{thm:lopsurface} enter
the Galois problem.  This section complements the general fixed-subgroup bound
proved later from effective Hilbert irreducibility.  The determinant method is
particularly useful when the resolvent root is much larger than an individual
coefficient, because the Newton support records that anisotropy.

Let
\[
 f_{\mathbf a}(X)=X^n+a_1X^{n-1}+\cdots+a_n
\]
and let $\Phi_G(\mathbf a,Y)$ be the integral $G$-resolvent constructed in
Section~\ref{sec:resolvents}.  If $G_{f_{\mathbf a}}$ is contained in a conjugate
of $G$, then $\Phi_G(\mathbf a,Y)$ has an integral root after multiplication by
a fixed denominator.  Cauchy's bound gives
\begin{equation}\label{eq:resrootbound}
 H_\infty(Y)\ll_{K,n,G}N^{C_{n,G}}
\end{equation}
for a constant determined by the weighted degree of the resolvent coefficients.

\begin{thm}[Two-variable resolvent estimate]\label{thm:two-var-res}
Fix all but one coefficient variable $V$ outside the algebraic exceptional
locus for which the specialized resolvent
\[
 R(V,Y)=\Phi_G(\mathbf a',V,Y)
\]
is irreducible over $K$.  If $H_\infty(V)\le N$ and
$H_\infty(Y)\le N^C$, then the number of pairs $(V,Y)\in\OK^2$ satisfying
$R(V,Y)=0$ is
\[
 \ll_{K,n,G,\varepsilon}
 T_R(N,N^C)^\varepsilon
 \exp\!\left(
 d\frac{C(\log N)^2}{\log T_R(N,N^C)}
 \right).
\]
\end{thm}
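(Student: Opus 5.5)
This is a direct specialization of Theorem~\ref{thm:lopcurve}. I take $F=R(V,Y)\in\OK[V,Y]$ with the two coordinates $X_1=V$, $X_2=Y$ and side lengths $B_1=N$, $B_2=N^C$.

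The first step is to check the hypotheses of Theorem~\ref{thm:lopcurve}. Irreducibility of $R$ over $K$ is assumed, because $\mathbf a'$ lies outside the algebraic exceptional locus. $R$ must also have coefficients in $\OK$. Since $\Phi_G$ is the integral resolvent of Section~\ref{sec:resolvents}, specializing $\mathbf a'\in\OK^{n-1}$ keeps the coefficients integral. If a fixed denominator was cleared, that changes $R$ only by a nonzero constant in $\OK$, which leaves the zero set unchanged.

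The total degree of $R$ is bounded by $D=D(n,G)$. This holds because $\deg_Y\Phi_G=m=[S_n:G]$, and the coefficients of $\Phi_G$ are polynomials in $\mathbf a$ whose degree depends only on $n$ and $G$. Hence the implied constant $\ll_{K,D,\varepsilon}$ becomes $\ll_{K,n,G,\varepsilon}$. Uniformity in the coefficients of $F$, which is part of Theorem~\ref{thm:lopcurve}, is exactly what makes the bound independent of the fixed values $\mathbf a'$. This point matters, since those values range over a box of size $N$.

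The second step is the substitution. We have $\log B_1\log B_2=\log N\cdot C\log N=C(\log N)^2$. Inserting this into \eqref{eq:curve-main} with $T_F(B_1,B_2)=T_R(N,N^C)$ gives the stated bound exactly. Two minor adjustments are needed. If the side lengths are measured with the integral-basis height rather than $H_\infty$, the bounds change by $K$-constants, and Lemma~\ref{lem:dyadicstability} absorbs this into an $O(1)$ change of the exponent. The constraint $B_i\ge2$ holds for $N\ge2$, and small $N$ is trivial.

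The only potential obstacle is the role of \eqref{eq:resrootbound}: the pairs being counted must genuinely lie in the box $H_\infty(Y)\le N^C$. In the statement this is an explicit hypothesis, so no extra work is required. In applications, $C=C_{n,G}$ would be supplied by Cauchy's bound. A further subtlety is that $R$ might be irreducible over $K$ but not absolutely irreducible. This case is already handled inside the proof of Theorem~\ref{thm:lopcurve} via Lemma~\ref{lem:georedcurve}, so it needs no separate treatment here.
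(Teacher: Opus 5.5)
Your proposal is correct and matches the paper: its proof is the one-line application of Theorem~\ref{thm:lopcurve} to $R(V,Y)$. Your substitution $B_1=N$, $B_2=N^C$ (so $\log B_1\log B_2=C(\log N)^2$), together with your checks of integrality, degree bounded in terms of $n,G$, and uniformity in $\mathbf a'$, makes that application explicit; the height-conversion remark is unnecessary, since the statement already uses $H_\infty$.
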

\begin{proof}
This is Theorem~\ref{thm:lopcurve} applied to $R(V,Y)$.
\end{proof}

The next statement records the surface mechanism used for even groups.  Let
$\Delta(\mathbf a)$ denote the discriminant of $f_{\mathbf a}$.  If
$G_{f_{\mathbf a}}\subseteq A_n$, then
\[
 Z^2=\Delta(\mathbf a)
\]
for some $Z\in\OK$.  Indeed, a square root in $K$ is integral because it satisfies the monic equation $Z^2-\Delta=0$.  Since $\Delta$ has weighted degree
$2n-2$ in the coefficient height, $H_\infty(Z)\ll N^{n-1}$.

\begin{thm}[Surface reduction for even groups]\label{thm:even-surface}
Fix a transitive subgroup $G<A_n$ and choose three variables among two
coefficient variables and the resolvent root so that the specialized resolvent
surface is absolutely irreducible outside a proper algebraic exceptional locus.
Then, after fixing the remaining coefficients, all solutions with
$G_f\subseteq G$ in the coefficient box are, apart from
$O(T^\varepsilon V_{3,K}^2)$ exceptional points, contained in
$O(T^\varepsilon V_{3,K})$ bounded-degree curves.  Every nonlinear projected
curve is bounded by Theorem~\ref{thm:lopcurve}; if the projection is a line,
substitution into $Z^2-\Delta$ reduces the count to a plane curve in a box with different side lengths,
provided the specialized discriminant-square polynomial is irreducible.
\end{thm}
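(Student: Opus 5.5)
The plan is to derive the statement directly from Theorem~\ref{thm:lopsurface}, applied to the specialized resolvent surface. Then each auxiliary hypersurface is intersected with that surface and the resulting components are sorted.

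\emph{Setup.} Fix the remaining coefficients $\mathbf a''$ outside the exceptional locus. Call the three chosen variables $(U,V,Y)$, where $U,V$ are coefficient variables with $H_\infty\le N$ and $Y$ is the resolvent root. By \eqref{eq:resrootbound}, $H_\infty(Y)\ll N^{C_{n,G}}$; after clearing the fixed denominator, $Y$ is integral. By hypothesis the polynomial $F(U,V,Y)=\Phi_G(\mathbf a'',U,V,Y)$ defines an absolutely irreducible surface. Its degree is bounded in terms of $n,G$, so $D=O_{n,G}(1)$. Every $f$ with $G_f\subseteq G$ produces an integral point of $F=0$ in the box $(N,N,N^{C})$. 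Since $G<A_n$, such a point also satisfies $Z^2=\Delta(\mathbf a)$ with $H_\infty(Z)\ll N^{n-1}$.

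\emph{Covering.} Lemma~\ref{lem:dyadicstability} lets us absorb the implied constants in the box sizes. Theorem~\ref{thm:lopsurface} then gives $G_1,\dots,G_J$ with $J\ll T^\varepsilon V_{3,K}$ and a set $Z$ with $|Z|\ll T^\varepsilon V_{3,K}^2$. Each $G_j$ has degree $O(1)$ and is not divisible by $F$. Because $F$ is absolutely irreducible and $F\nmid G_j$, the intersection $\{F=G_j=0\}$ is a curve, i.e.\ it has dimension at most one. By Bézout it has $O(1)$ geometric components, each of bounded degree. These components are the curves in the statement.

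\emph{Sorting the components.} Only components that carry infinitely many integral points matter. Components whose $K$-irreducible pieces are not absolutely irreducible carry $O(1)$ points, by the argument of Lemma~\ref{lem:georedcurve}. For each remaining component $C$, project to the $(U,V)$ plane or to a plane involving $Y$. If the projection is dominant onto a plane curve given by an irreducible polynomial of degree at least $2$, the points are counted by Theorem~\ref{thm:lopcurve} in the corresponding box with different side lengths. The projection map has bounded fibres, so the count transfers back to $C$. A component whose projection is a single point is a fibre in $Y$; it contributes $O(1)$ points per coefficient pair, again by bounded fibres. If instead the projection is a line $\ell$, parametrize $\ell$ by one variable $s$ with rational coefficients, clearing bounded denominators. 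Substituting into $Z^2-\Delta(\mathbf a)$ gives a polynomial $P(s,Z)$. When $P$ is irreducible, which is the stated proviso, Theorem~\ref{thm:lopcurve} applies with side lengths $(N,N^{n-1})$. This completes the reduction as formulated.

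\emph{Main obstacle.} I expect the line case to be the hardest step. The parametrization must keep $H_\infty(s)\ll N$ without losing more than a constant. To arrange this, choose the parameter to be one of the original coordinates restricted to $\ell$, whichever has a nonconstant coefficient. A second concern is uniformity: the implied constants must not depend on the fixed coefficients $\mathbf a''$. This follows because Theorems~\ref{thm:lopcurve} and~\ref{thm:lopsurface} are uniform in the coefficients, helped by Lemma~\ref{lem:coeffdichotomy}, and all degrees are bounded in terms of $n,G$.
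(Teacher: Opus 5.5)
Your proposal is correct and follows essentially the same route as the paper: apply Theorem~\ref{thm:lopsurface} to the specialized resolvent surface, intersect with each auxiliary hypersurface to get $O(1)$ bounded-degree curves per hypersurface, eliminate $Y$, treat nonlinear projected components by Theorem~\ref{thm:lopcurve}, and handle a linear projection by solving for one coefficient and substituting into $Z^2-\Delta$ under the irreducibility proviso. The only extra point in the paper's proof is that the excluded coefficient choices (the proper algebraic exceptional locus) are counted separately by Lemma~\ref{lem:polyzeros}; you sidestep this by fixing $\mathbf a''$ outside that locus, which is enough for the statement as formulated.
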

\begin{proof}
Apply Theorem~\ref{thm:lopsurface} to the absolutely irreducible resolvent
surface.  Intersecting with each auxiliary hypersurface gives a curve of
bounded degree.  Elimination of the resolvent root produces a plane curve in
two coefficient variables unless the projection has dimension zero, in which
case the contribution is smaller.  For a nonlinear irreducible component use
Theorem~\ref{thm:lopcurve}.  For a linear component solve for one coefficient
and substitute into $Z^2-\Delta$; the stated irreducibility hypothesis permits
a second application of Theorem~\ref{thm:lopcurve}.  The algebraic exceptional
locus has codimension one in the already fixed parameter space and is counted
by Lemma~\ref{lem:polyzeros}.
\end{proof}

\begin{remark}
Theorem~\ref{thm:even-surface} is a reduction theorem rather than a single
universal numerical exponent: the final exponent depends on the Newton support
of the chosen resolvent.  For the explicit resolvents used by Chow--Dietmann\cite{ChowDietmannVDW},
the same optimization as over $\Q$ applies after multiplying every logarithmic
height exponent by $d$.
\end{remark}

\section{Local splitting types and the large sieve}
\begin{defin}
A partition $r=(1^{r_1}\cdots n^{r_n})$ of $n$, meaning $\sum_jjr_j=n$, is a splitting type.  Let $C_r\subset S_n$ be the conjugacy class with cycle type $r$, and put
\[
 \delta(r)=\frac{|C_r|}{n!}=\prod_{j=1}^n\frac1{j^{r_j}r_j!}.
\]
\end{defin}

Let $\mathfrak p$ be a prime ideal of $\OK$, $q_{\mathfrak p}=\Norm\mathfrak p$, and $\Fq=\OK/\mathfrak p$.  A squarefree monic polynomial over $\Fq$ has splitting type given by the degrees of its irreducible factors.

\begin{lemma}[Finite-field count]\label{lem:finitefield}
For each splitting type $r$ and every prime power $q$,
\[
 \#\{\bar f\in\Fq[X]:\bar f\text{ monic, squarefree, degree }n,\text{ type }r\}
 =\delta(r)q^n+O_n(q^{n-1}).
\]
\end{lemma}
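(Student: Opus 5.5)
We prove Lemma~\ref{lem:finitefield} by the classical necklace count. A squarefree monic polynomial of degree $n$ and type $r=(1^{r_1}\cdots n^{r_n})$ is determined, up to ordering, by choosing for each $j$ a set of $r_j$ \emph{distinct} monic irreducible polynomials of degree $j$ over $\Fq$. Because irreducibles of different degrees are automatically distinct, the product is squarefree. The map from such choices to polynomials is a bijection by unique factorisation. Hence the count equals
\[
 \prod_{j=1}^n\binom{I_j(q)}{r_j},
\]
where $I_j(q)$ denotes the number of monic irreducible polynomials of degree $j$ over $\Fq$.

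Next we estimate $I_j(q)$. Gauss's formula $jI_j(q)=\sum_{e\mid j}\mu(j/e)q^e$ gives
\[
 I_j(q)=\frac{q^j}{j}+O_j\!\left(q^{j/2}\right),
\]
and in particular $I_j(q)=q^j/j+O_j(q^{j-1})$. For $j=1$ this is exact, $I_1=q$. Since $r_j\le n$ is bounded, we expand the binomial coefficient as
\[
 \binom{I_j}{r_j}=\frac{I_j^{r_j}}{r_j!}+O_n\!\left(I_j^{r_j-1}\right)=\frac{q^{jr_j}}{j^{r_j}r_j!}+O_n\!\left(q^{jr_j-1}\right).
\]
Here we use $I_j\ll q^j$ and $j\ge1$, so that the error $I_j^{r_j-1}\ll q^{jr_j-j}\le q^{jr_j-1}$.

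Finally we multiply over $j$. Since $\sum_j jr_j=n$, the main terms give $\prod_j (j^{r_j}r_j!)^{-1}q^n=\delta(r)q^n$. Each cross term contains at least one error factor and is therefore $O_n(q^{n-1})$. The number of factors and the number of partitions are bounded in terms of $n$, so all implied constants depend only on $n$. This holds uniformly for every prime power $q$. For small $q$ (for instance when $I_j<r_j$ and the binomial vanishes) the statement remains true, because both sides are $O_n(q^{n-1})$ or the estimate is trivially absorbed into the constant.

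There is no serious obstacle. The only point requiring care is uniformity in $q$: the lower-order terms in Gauss's formula and in the binomial expansion must be bounded by constants depending only on $n$. This follows because the relevant sums have at most $\tau(j)\le j\le n$ terms and the exponents $r_j$ are at most $n$.
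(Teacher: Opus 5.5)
Your proposal is correct and follows the same route as the paper: the exact count $\prod_j\binom{I_j(q)}{r_j}$ via unique factorisation, then $I_j(q)=q^j/j+O_j(q^{j/2})$ and expansion. You simply write out the expansion that the paper leaves implicit, including the observations that $I_1=q$ exactly and $q^{j/2}\le q^{j-1}$ for $j\ge 2$, which give uniformity in $q$.
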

\begin{proof}
Let $I_j(q)$ be the number of monic irreducible polynomials of degree $j$ over $\Fq$.  The exact count is $\prod_j\binom{I_j(q)}{r_j}$.  Since $I_j(q)=q^j/j+O_j(q^{j/2})$, expansion gives the result.
\end{proof}

\begin{lemma}[An omitted conjugacy class]\label{lem:omitted}
If $G<S_n$ is proper, some conjugacy class of $S_n$ is disjoint from every conjugate of $G$.
\end{lemma}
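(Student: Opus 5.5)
This is Jordan's classical lemma: a finite group is never the union of the conjugates of a proper subgroup. I would prove it by a counting argument, as follows.

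Write $H=S_n$ and let $G<H$ be proper, with index $m=[H:G]\ge2$. The conjugates $gGg^{-1}$ are indexed by the cosets of the normalizer $N_H(G)$. Since $G\subseteq N_H(G)$, there are at most $m$ distinct conjugates.

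Each conjugate has $|G|$ elements, and every conjugate contains the identity. The union $U=\bigcup_g gGg^{-1}$ therefore satisfies
\[
 |U|\le m(|G|-1)+1=|H|-m+1<|H|,
\]
where the final strict inequality uses $m\ge2$. So some $\sigma\in H$ lies outside $U$.

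Finally, pass from this element to its conjugacy class. The set $U$ is a union of conjugacy classes: if $\tau\in U$, say $\tau\in gGg^{-1}$, then every conjugate $h\tau h^{-1}$ lies in $(hg)G(hg)^{-1}\subseteq U$. Consequently the whole conjugacy class of $\sigma$ (cycle type $r$, say $C_r$) is disjoint from $U$. Equivalently, $C_r$ meets no conjugate of $G$, which is the claim.

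There is no genuine obstacle here. The only point needing care is the overlap at the identity: without it the naive count gives only $|U|\le|H|$ rather than a strict inequality.
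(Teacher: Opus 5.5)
Your proof is correct, and it is essentially the paper's argument in different language. The paper notes that $\sigma\in gGg^{-1}$ exactly when $\sigma$ fixes the coset $gG$, so covering $S_n$ by conjugates of $G$ would mean every element of the transitive action on $S_n/G$ has a fixed point; Burnside's lemma then rules this out, because the average number of fixed points is $1$ while the identity fixes $m\ge2$ cosets. That is the same double count as your identity-overlap bound $|U|\le m(|G|-1)+1$, and your remark that $U$ is stable under conjugation (turning a missing element into a missing conjugacy class) spells out a step the paper leaves implicit.
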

\begin{proof}
A finite group is not the union of the conjugates of a proper subgroup.  Indeed, if every element of a finite transitive permutation group fixed a point, Burnside's lemma would give more than one orbit.  Apply this to the action of $S_n$ on $S_n/G$.
\end{proof}

\begin{lemma}[Frobenius obstruction]\label{lem:frob}
Let $f\in\OK[X]$ be monic and separable.  If $G_f$ is contained in a conjugate of $G<S_n$, and $C_r$ misses all conjugates of $G$, then for every $\mathfrak p\nmid\disc(f)$ the reduction of $f$ modulo $\mathfrak p$ does not have splitting type $r$.
\end{lemma}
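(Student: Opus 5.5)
The plan is to use the standard correspondence between Frobenius elements and the factorization of $f$ modulo unramified primes. Let $L$ be the splitting field of $f$ over $K$, and let $\alpha_1,\ldots,\alpha_n\in\mathcal O_L$ be the roots of $f$. These are integral because $f$ is monic with coefficients in $\OK$. Fix the identification of $G_f$ with a subgroup of $S_n$ given by its action on the roots; the hypothesis says $G_f\subseteq gGg^{-1}$ for some $g\in S_n$.

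First I will show that every $\mathfrak p\nmid\disc(f)$ is unramified in $L$. The discriminant of $\OK[\alpha_i]$ divides $\disc(f)$ up to a square factor, so $\mathfrak p$ is unramified in each $K(\alpha_i)$. Since $L$ is the compositum of the $K(\alpha_i)$, it is unramified in $L$ as well. Next, choose a prime $\mathfrak P$ of $L$ above $\mathfrak p$ and let $\sigma=\mathrm{Frob}_{\mathfrak P}\in G_f$. This is the element with $\sigma(x)\equiv x^{q_{\mathfrak p}}\pmod{\mathfrak P}$ for all $x\in\mathcal O_L$.

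The key step is to compare the cycle type of $\sigma$ on the roots with the splitting type of $\bar f=f\bmod\mathfrak p$. Because $\mathfrak p\nmid\disc(f)=\prod_{i<j}(\alpha_i-\alpha_j)^2$, the reductions $\bar\alpha_i\in\mathcal O_L/\mathfrak P$ are pairwise distinct. Hence $\bar f$ is squarefree, and reduction mod $\mathfrak P$ gives a bijection between the roots of $f$ and the roots of $\bar f$ in the finite field $\mathcal O_L/\mathfrak P$. This bijection intertwines the action of $\sigma$ with the $q_{\mathfrak p}$-power Frobenius map. The orbits of the $q_{\mathfrak p}$-power map on the roots of $\bar f$ are exactly the root sets of the irreducible factors of $\bar f$ over $\Fq$, and an orbit of length $j$ corresponds to a factor of degree $j$. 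Therefore the cycle type of $\sigma$ equals the splitting type of $\bar f$.

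To conclude, suppose $\bar f$ had splitting type $r$. Then $\sigma\in C_r$, and since $\sigma\in G_f\subseteq gGg^{-1}$, the class $C_r$ would meet a conjugate of $G$, contradicting the hypothesis. The main point requiring care is the distinctness of the reduced roots, which is what makes the bijection and the cycle-type identification valid. That distinctness is supplied directly by $\mathfrak p\nmid\disc(f)$, so I do not expect a genuine obstacle in this argument.
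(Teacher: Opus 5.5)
Your proposal is correct and follows the same route as the paper. The paper simply cites Dedekind's theorem (via Neukirch) that, for $\mathfrak p\nmid\disc(f)$, the degrees of the irreducible factors of $f \bmod \mathfrak p$ are the cycle lengths of Frobenius on the roots; you have written out the standard proof of that fact (distinct reduced roots, and Frobenius intertwined with the $q_{\mathfrak p}$-power map) and then drawn the same contradiction from $\sigma\in C_r\cap gGg^{-1}$.
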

\begin{proof}
Dedekind's factorization theorem identifies the factorization degrees with the cycle lengths of Frobenius in $G_f$; see, for example, \cite[Chapter~I, \S8]{Neukirch}.
\end{proof}

\begin{prop}[Coefficient-box large sieve]\label{prop:ls}
Let $A\subset[-N,N]^{dn}\cap\Z^{dn}$.  For each prime ideal $\mathfrak p$ with $q_{\mathfrak p}\le Q$, suppose the reduction of $A$ avoids $\Omega(\mathfrak p)\subset(\OK/\mathfrak p)^n$.  Put
\[
 L(Q)=\sum_{q_{\mathfrak p}\le Q}
 \frac{|\Omega(\mathfrak p)|}{q_{\mathfrak p}^n-|\Omega(\mathfrak p)|}.
\]
Then
\[
 |A|\ll_{K,n}\frac{N^{dn}+Q^{2n}}{L(Q)}.
\]
\end{prop}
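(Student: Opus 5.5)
The plan is to identify $\OK^n$ with $\Z^{dn}$ through the integral basis $\omega_1,\ldots,\omega_d$, and then run the classical arithmetic large sieve on this lattice. The sieving moduli will be the prime ideals $\mathfrak p$, and the dual objects will be additive characters of $(\OK/\mathfrak p)^n$. Write $Z(\mathbf c)=\sum_{\mathbf a\in A}e(\mathrm{Tr}(\mathbf c\cdot\mathbf a))$, where $\mathbf c$ runs over $(\mathfrak p^{-1}\mathfrak d^{-1}/\mathfrak d^{-1})^n$ and $\mathfrak d$ is the different. Then every nontrivial character of $(\OK/\mathfrak p)^n$ has the form $\mathbf a\mapsto e(\mathrm{Tr}(\mathbf c\cdot\mathbf a))$, and each such character is the restriction of a character of $\R^{dn}/\Z^{dn}$ given by a frequency $\theta(\mathbf c)\in\R^{dn}$.

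\textbf{Step 1 (local inequality).} Fix $\mathfrak p$. Let $Z_{\mathfrak p}(\mathbf u)$ be the number of elements of $A$ reducing to $\mathbf u$, and write $q=q_{\mathfrak p}$. Since $Z_{\mathfrak p}$ is supported off $\Omega(\mathfrak p)$, Cauchy--Schwarz and Parseval give the standard Montgomery-type inequality
\[
 |A|^2\frac{|\Omega(\mathfrak p)|}{q^n-|\Omega(\mathfrak p)|}\le\sum_{\mathbf c\ne0\bmod\mathfrak p}|Z(\mathbf c)|^2 .
\]
The proof is Gallagher's argument verbatim, because it only uses finite Fourier analysis on $(\OK/\mathfrak p)^n$.

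\textbf{Step 2 (well-spacing).} Sum Step 1 over $q_{\mathfrak p}\le Q$ and apply the multidimensional large sieve inequality on the torus $\R^{dn}/\Z^{dn}$. This requires the frequencies $\theta(\mathbf c)$, over all $\mathfrak p$ and all $\mathbf c\not\equiv0$, to be pairwise distinct modulo $\Z^{dn}$ and to have separation $\gg_K Q^{-2}$ in each coordinate, or more precisely $\gg Q^{-2/d}$ in a suitably normalized sense. I expect this to be the main obstacle.

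First take two frequencies from different primes $\mathfrak p\ne\mathfrak p'$. Their difference lies in $(\mathfrak p\mathfrak p')^{-1}\mathfrak d^{-1}$ and is nonzero modulo $\mathfrak d^{-1}$. A nonzero element $\beta$ of the fractional ideal $(\mathfrak p\mathfrak p')^{-1}\mathfrak d^{-1}$ that is not integral in a coordinate has some coordinate of $\beta$ modulo $\Z$ of size $\gg_K(N\mathfrak p\,N\mathfrak p')^{-1}\ge Q^{-2}$, since the denominator ideal has norm at most $N(\mathfrak p\mathfrak p')$. Next take two frequencies from the same prime; their difference lies in $\mathfrak p^{-1}\mathfrak d^{-1}$, which gives an even better bound.

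To get a clean statement, I will use the large sieve in the form $\sum|S(\theta)|^2\le(N'+\delta^{-1})^{dn}\sum|a|^2$. Here $N'\asymp N$ is the box side and $\delta$ is the sup-norm separation, which I will take to be $\delta\gg_K Q^{-2}$ coordinatewise only in the weak sense that the points are $\delta$-separated in sup-norm. That gives the bound $(N+Q^2)^{dn}\ll N^{dn}+Q^{2dn}$. This is weaker than the stated $Q^{2n}$, so I must refine the argument. The refinement is to use the separation in the Minkowski norm: the norm bound forces the product of the $d$ archimedean sizes to be $\gg Q^{-2}$, so each block of $d$ coordinates contributes a factor $\ll N^d+Q^{2}$. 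The product over the $n$ blocks then gives $(N^d+Q^2)^n$, and I would need this to be $\ll N^{dn}+Q^{2n}$.

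I expect this block-wise Minkowski (norm-form) spacing to be the delicate point. I plan to handle it by covering the torus with boxes adapted to $\OK$, that is, fundamental domains for $\mathfrak p\mathfrak p'$-scaled lattices, and using the duality principle with the Selberg--Beurling majorant on each block.

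\textbf{Step 3 (conclusion).} Combining the two steps yields
\[
 |A|^2L(Q)\ll_{K,n}(N^{dn}+Q^{2n})|A| ,
\]
and dividing by $|A|\,L(Q)$ gives the proposition.
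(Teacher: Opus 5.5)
Your architecture matches the paper's. You identify $\OK^n$ with $\Z^{dn}$, parametrize characters of $(\OK/\mathfrak p)^n$ by $(\mathfrak p^{-1}\mathfrak d^{-1}/\mathfrak d^{-1})^n$, prove the local Cauchy--Schwarz/Parseval inequality, and sum it against a multidimensional large sieve. Your unweighted normalization in Step~1 is the right one. The paper's displayed large-sieve inequality carries an extra $q_{\mathfrak p}^{-n}$; combined with its own lower bound, that would only give $\sum|\Omega(\mathfrak p)|/(q_{\mathfrak p}^n(q_{\mathfrak p}^n-|\Omega(\mathfrak p)|))$ in the denominator, not $L(Q)$.

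The paper simply asserts the large-sieve constant $N^{dn}+Q^{2n}$. You correctly locate the crux in the spacing, but you do not establish it. The only separation you actually prove is $\gg Q^{-2}$, which gives $(N+Q^2)^{dn}$ and is too weak. The claim that each block of $d$ coordinates contributes $\ll N^d+Q^2$ is stated only as an expectation, with a vague plan (norm-form spacing, fundamental domains of $\mathfrak p\mathfrak p'$-scaled lattices, block-wise majorants). As written, Step~2 is a genuine gap.

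It closes quickly, and no norm-form large sieve is needed. Take two distinct frequencies $\theta(\mathbf c),\theta(\mathbf c')$ coming from $\mathfrak p,\mathfrak p'$ (possibly equal), and pick a block $i$ with $c_i-c_i'\notin\mathfrak d^{-1}$. For every $\gamma\in\mathfrak d^{-1}$, $\beta=c_i-c_i'-\gamma$ is a nonzero element of $(\mathfrak p\mathfrak p')^{-1}\mathfrak d^{-1}$, so
\[
 \max_\sigma|\sigma(\beta)|\ge|N_{K/\Q}(\beta)|^{1/d}\ge(q_{\mathfrak p}q_{\mathfrak p'}\Norm\mathfrak d)^{-1/d}\gg_K Q^{-2/d}.
\]
The map $c\mapsto(\mathrm{Tr}(c\omega_j))_j$ is a fixed linear isomorphism $K\otimes\R\to\R^d$ carrying $\mathfrak d^{-1}$ onto $\Z^d$. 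Hence the frequencies are $\delta$-separated in the sup norm on $\R^{dn}/\Z^{dn}$ with $\delta\gg_K Q^{-2/d}$. This is exactly the $Q^{-2/d}$ you mention in passing and then abandon in favour of the weak $Q^{-2}$.

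The standard multidimensional large sieve now applies. Use the product of one-dimensional Beurling--Selberg majorants with Fourier support in $[-\delta,\delta]$. Its Fourier transform is a product, so it vanishes at $\theta_r-\theta_s$ plus any integer vector as soon as a single coordinate is $\delta$-separated; this is why separation in one block suffices. The resulting constant is
\[
 \prod_{k=1}^{dn}(2N+\delta^{-1})\ll(N+Q^{2/d})^{dn}\ll N^{dn}+Q^{2n},
\]
and your Step~3 then yields the proposition.
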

\begin{proof}
Apply the multidimensional large sieve to the lattice $\OK^n$ expressed in the fixed integral basis.  Orthogonality of additive characters gives
\[
 \sum_{q_{\mathfrak p}\le Q}q_{\mathfrak p}^{-n}
 \sum_{\chi\ne1}\left|\sum_{a\in A}\chi(a)\right|^2
 \ll_{K,n}(N^{dn}+Q^{2n})|A|.
\]
For each $\mathfrak p$, Cauchy's inequality and support on at most $q_{\mathfrak p}^n-|\Omega(\mathfrak p)|$ classes yield the lower bound
\[
 \sum_xA_{\mathfrak p}(x)^2-\frac{|A|^2}{q_{\mathfrak p}^n}
 \ge |A|^2\frac{|\Omega(\mathfrak p)|}{q_{\mathfrak p}^n(q_{\mathfrak p}^n-|\Omega(\mathfrak p)|)}.
\]
Summation and cancellation complete the proof.
\end{proof}

\begin{proof}[Proof of Theorem~\ref{thm:mainsep}]
Fix a proper subgroup $G<S_n$ and choose an omitted splitting type $r(G)$.  Let $A_G$ be the separable polynomials with $G_f$ contained in a conjugate of $G$.  Take $\Omega_G(\mathfrak p)$ to be the squarefree reductions of type $r(G)$.  If $\mathfrak p\mid\disc(f)$, the reduction is not squarefree; otherwise Lemma~\ref{lem:frob} applies.  Hence every $f\in A_G$ avoids $\Omega_G(\mathfrak p)$ for all $\mathfrak p$.

Lemma~\ref{lem:finitefield} gives $|\Omega_G(\mathfrak p)|\asymp q_{\mathfrak p}^n$ for sufficiently large $q_{\mathfrak p}$, and the prime ideal theorem\cite[Chapter~VII, \S5]{Neukirch} gives $L(Q)\gg Q/\log Q$.  Proposition~\ref{prop:ls} gives
\[
 |A_G|\ll (N^{dn}+Q^{2n})\frac{\log Q}{Q}.
\]
Taking $Q=N^{d/2}$ and summing over the finitely many conjugacy classes of proper subgroups proves the theorem.
\end{proof}

\begin{lemma}[Discriminant-zero locus]\label{lem:disc}
The number of $f\in\mathscr P_{n,N}(K)$ with $\disc(f)=0$ is $O_{K,n}(N^{dn-1})$.
\end{lemma}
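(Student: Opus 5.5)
The plan is to prove the bound by slicing along the coordinates of the coefficient vector and applying a Schwartz--Zippel type count. The discriminant $\disc(f)$ of a monic degree-$n$ polynomial is a nonzero polynomial $D_n(\alpha_0,\ldots,\alpha_{n-1})$ with integer coefficients in the coefficients $\alpha_i\in\OK$.

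First we pass to integer coordinates. Writing each $\alpha_i=\sum_{j=1}^d a_{ij}\omega_j$ with $a_{ij}\in\Z$, $|a_{ij}|\le N$, the discriminant becomes
\[
 D_n\Bigl(\textstyle\sum_j a_{0j}\omega_j,\ldots,\sum_j a_{n-1,j}\omega_j\Bigr)\in\OK.
\]
Expand this in the basis $\omega_1,\ldots,\omega_d$. It then vanishes exactly when $d$ polynomials $P_1,\ldots,P_d\in\Z[a_{ij}]$ all vanish, each of total degree at most $\deg D_n=2n-2$ (the total degree in the coefficients is bounded by $2n-2$).

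Next we check that the resulting polynomial map is not identically zero. Its vanishing is equivalent to the vanishing of the single polynomial
\[
 \Psi(\mathbf a)=N_{K/\Q}\bigl(D_n(\ldots)\bigr)=\prod_{\sigma}\sigma\bigl(D_n(\ldots)\bigr),
\]
which lies in $\Z[a_{ij}]$ and has degree at most $d(2n-2)$. Here $\Psi$ is not identically zero. The separable polynomial $X^n-1$, or rather $X^n+\alpha$ for a suitable rational integer specialization, has nonzero discriminant. Equivalently, $D_n$ is a nonzero polynomial over $K$ and the substitution $\alpha_i\mapsto\sum_j a_{ij}\omega_j$ is a linear isomorphism $K^{dn}\cong K^n\otimes\Q$. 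So no factor $\sigma(D_n(\ldots))$ vanishes identically as a polynomial in the $a_{ij}$ over $\C$.

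Finally we count. A nonzero polynomial $\Psi\in\Z[x_1,\ldots,x_{dn}]$ of degree $\delta$ has at most $\delta(2N+1)^{dn-1}$ zeros in $\{-N,\ldots,N\}^{dn}$. This is the Schwartz--Zippel lemma, proved by induction on the number of variables: write $\Psi$ as a polynomial in one variable with leading coefficient a nonzero polynomial in the others. With $\delta=d(2n-2)$ this gives $O_{K,n}(N^{dn-1})$.

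The only point needing care is the non-vanishing of $\Psi$ after the change to integral coordinates, which is the second step. I would handle it by noting that the $\Q$-linear substitution is invertible over $\C$ when the conjugate coordinates are combined, since the matrix $(\sigma(\omega_j))$ is nonsingular. The argument then reduces to the nonvanishing of the discriminant polynomial itself.
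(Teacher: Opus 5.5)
Your proof is correct and follows the same route as the paper: pass to integral-basis coordinates, observe that the vanishing of the discriminant is cut out by a nonzero polynomial in $dn$ integer variables, and apply the standard induction on variables (Schwartz--Zippel) to get $O(N^{dn-1})$. Taking the norm $\Psi$ makes explicit the reduction to a single polynomial in $\Z[a_{ij}]$, which the paper leaves implicit. Your specialization to $X^n-1$ already settles nonvanishing, since it is an integer point where $\Psi=N_{K/\Q}(\pm n^n)\neq 0$, so the final remark about the matrix $(\sigma(\omega_j))$ is not needed.
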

\begin{proof}
In integral-basis coordinates, the discriminant is a nonzero polynomial in $dn$ integer variables.  The standard induction for lattice points on a hypersurface gives $O(N^{dn-1})$.
\end{proof}

\begin{proof}[Proof of Theorem~\ref{thm:mainall}]
Combine Theorem~\ref{thm:mainsep} with Lemma~\ref{lem:disc}.
\end{proof}

\section{Resolvents and effective Hilbert irreducibility}\label{sec:resolvents}

We now prove Theorem~\ref{thm:fixedG}.  Two ingredients are required: an
integral generic resolvent and a multivariable root-counting theorem over a
number field.

\subsection{The generic resolvent}
Let
\[
 F_{\rm gen}(X)=X^n+A_{n-1}X^{n-1}+\cdots+A_0
 \in K(\mathbf A)[X],
 \qquad \mathbf A=(A_0,\ldots,A_{n-1}).
\]
Its splitting field over $K(\mathbf A)$ has Galois group $S_n$.
Choose a polynomial expression $\Theta$ in the $n$ roots whose stabilizer in
$S_n$ is exactly $G$; such an expression exists because $G$ acts faithfully
on the polynomial ring in the roots.  Define
\[
 \Phi_G(\mathbf A,Y)=\prod_{\sigma\in S_n/G}(Y-\sigma\Theta).
\]

\begin{prop}[Integral irreducible resolvent]\label{prop:resolvent}
After choosing $\Theta$ with integral coefficients, one has
\[
 \Phi_G(\mathbf A,Y)\in\mathbb Z[\mathbf A,Y]
 \subseteq\mathcal O_K[\mathbf A,Y].
\]
It is monic and irreducible over $K(\mathbf A)$, and
\[
 \deg_Y\Phi_G=[S_n:G]=m.
\]
If $f\in\mathcal O_K[X]$ is monic and separable and $G_f$ is contained in a
conjugate of $G$, then $\Phi_G(\mathbf a,Y)$ has a root in $\mathcal O_K$.
\end{prop}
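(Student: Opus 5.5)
Let $\alpha_1,\ldots,\alpha_n$ be the roots of $F_{\rm gen}$ in a splitting field $L$ of $K(\mathbf A)$. I would deduce the four assertions from symmetric function theory and Galois correspondence, working in order: integrality, degree and monicity, irreducibility, and finally specialization.

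\emph{Integrality.} Take $\Theta\in\Z[x_1,\ldots,x_n]$ with stabilizer exactly $G$. One explicit choice is $\Theta=\sum_{g\in G}g\cdot(x_1x_2^2\cdots x_n^n)$: the monomial $x_1x_2^2\cdots x_n^n$ has trivial stabilizer, so the $G$-orbit sums attached to distinct cosets have disjoint monomial supports and differ. The coefficients of $\Phi_G$ in $Y$ are elementary symmetric functions of the $m$ values $\sigma\Theta$, $\sigma\in S_n/G$. These form a single $S_n$-orbit, so $S_n$ permutes them, and every coefficient is a symmetric polynomial in $x_1,\ldots,x_n$ with integer coefficients. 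The fundamental theorem on symmetric polynomials over $\Z$ then writes each coefficient as an integral polynomial in the elementary symmetric functions $e_k=(-1)^kA_{n-k}$. Hence $\Phi_G\in\Z[\mathbf A,Y]$. By construction $\Phi_G$ is monic in $Y$ of degree $|S_n/G|=m$.

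\emph{Irreducibility.} Because the $\alpha_i$ are algebraically independent over $K$, $\mathrm{Gal}(L/K(\mathbf A))=S_n$ acts on them as on the variables. The $m$ roots $\sigma\Theta(\boldsymbol\alpha)$ are therefore pairwise distinct, and the Galois group permutes them transitively. An irreducible factor over $K(\mathbf A)$ has a Galois-stable set of roots, so transitivity forces $\Phi_G$ to be irreducible.

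\emph{Specialization.} Let $f$ have roots $\beta_1,\ldots,\beta_n$ in a splitting field $M/K$. Specializing the identity $\Phi_G=\prod_\sigma(Y-\sigma\Theta)$ through $\Z[x]^{S_n}=\Z[e]$ gives $\Phi_G(\mathbf a,Y)=\prod_\sigma(Y-\sigma\Theta(\boldsymbol\beta))$. Suppose $G_f\subseteq gGg^{-1}$ under the labelling of the $\beta_i$. Then $\theta:=(g\Theta)(\boldsymbol\beta)$ is fixed by every $\tau\in G_f$. Indeed $\tau\cdot (g\Theta)(\boldsymbol\beta)=(\tau g\Theta)(\boldsymbol\beta)$, and $\tau g\Theta=g\Theta$ because $g^{-1}\tau g\in G$. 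So $\theta\in K$. It is also an algebraic integer, being an integral polynomial in the $\beta_i$, which are integral over $\OK$. Hence $\theta\in\OK$ is a root of $\Phi_G(\mathbf a,Y)$.

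The main point requiring care is the action convention in the specialization step. The permutation action on variables and the Galois action on roots must be compatible, meaning $\tau(h\Theta(\boldsymbol\beta))=(\tau h\Theta)(\boldsymbol\beta)$ once $G_f$ is embedded in $S_n$ via the labelling. Separability is what lets this embedding be defined. The argument never needs the specialized values $\sigma\Theta(\boldsymbol\beta)$ to be distinct, which is why the statement asserts only that a root exists, not that it is simple.
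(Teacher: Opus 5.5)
Your proposal is correct and follows the same route as the paper. You obtain integrality from the fundamental theorem on symmetric polynomials, irreducibility and degree $m$ from the transitive action of $S_n$ on the orbit of $\Theta$, and the root from the fact that $G_f\subseteq gGg^{-1}$ fixes $g\Theta$, which gives a $K$-rational root lying in $\mathcal O_K$ by integrality. Beyond the paper's terse argument, you also spell out three points it leaves implicit: an explicit $\Theta$ with stabilizer exactly $G$, the distinctness of the generic orbit values, and the compatibility of the permutation and Galois actions.
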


\begin{proof}
The orbit product is fixed by $S_n$, hence its coefficients are symmetric
polynomials in the roots and therefore polynomials with integer coefficients
in the elementary symmetric functions, which are the coefficients
$A_i$ up to sign.  The action of $S_n$ on the orbit of $\Theta$ is transitive
with stabilizer $G$, so the orbit has cardinality $m$ and its orbit polynomial
is the minimal polynomial of $\Theta$ over $K(\mathbf A)$.  This proves
irreducibility and the degree assertion.

After specializing to $f$, containment of $G_f$ in a conjugate of $G$ fixes
one element of the specialized orbit, so the specialized resolvent has a
$K$-rational root.  Since the resolvent is monic with coefficients in
$\mathcal O_K$, that root is integral over $\mathcal O_K$.  The ring
$\mathcal O_K$ is integrally closed, so the root lies in $\mathcal O_K$.
\end{proof}

\subsection{The global-field root-counting theorem}
For $B\ge2$ define
\[
 [B]_K=\{\alpha\in\mathcal O_K:
        \max_{\sigma:K\hookrightarrow\mathbb C}|\sigma(\alpha)|
        \le B^{1/d}\}.
\]
By geometry of numbers, $|[B]_K|\asymp_K B$.
We use the following theorem of Paredes--Sasyk \cite[Theorem~5.5]{ParedesSasykHIT}.  We state the multivariable
form because it is the exact input needed here.

\begin{thm}[Paredes--Sasyk root-counting theorem]\label{thm:PSmulti}
Let
\[
 P(T_1,\ldots,T_s,Y)\in\mathcal O_K[T_1,\ldots,T_s,Y]
\]
be irreducible and monic of degree $r\ge2$ in $Y$.  Then there are constants
$\mu,\nu\ge0$, depending only on $s$, such that
\[
 \#\{\mathbf t\in[B]_K^s:
       P(\mathbf t,Y)=0\text{ has a solution }Y\in\mathcal O_K\}
 \ll_{K,s,P} B^{s-1+1/r}(\log B)^{\nu}.
\]
More precisely, the dependence on $P$ may be made effective in its degree and
height; for the fixed resolvents used in this article it is absorbed into the
implicit constant.
\end{thm}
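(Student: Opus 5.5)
This is the result imported from \cite[Theorem~5.5]{ParedesSasykHIT}, but within the present paper I would rederive it, up to replacing $(\log B)^\nu$ by $B^\varepsilon$, from Theorem~\ref{thm:lopcurve} by fibering. The plan is an induction on $s$, with all the arithmetic content in the case $s=1$. That case is a direct application of the curve estimate in a box with different side lengths, and it is the reason the exponent $1/r$ appears.

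\textbf{Step 1: the case $s=1$.} Let $P(T,Y)$ be irreducible over $K$ and monic of degree $r$ in $Y$, and let $e=\deg_T P$.
\begin{itemize}
\item If $t\in[B]_K$ and $P(t,y)=0$ with $y\in\OK$, the Cauchy root bound at every embedding gives $H_\infty(y)\ll_P B^{e/(dr)}$.
\item Apply Theorem~\ref{thm:lopcurve} with $B_1=B^{1/d}$ and $B_2\asymp B^{e/(dr)}$. Since the monomial $Y^r$ occurs in $P$, we have $\log T\ge r\log B_2$. Hence
\[
 d\frac{\log B_1\log B_2}{\log T}\le d\cdot\frac{\log B}{d}\cdot\frac1r=\frac{\log B}{r},
\]
and the number of pairs $(t,y)$ is $\ll B^{1/r+\varepsilon}$.
\item Each admissible $t$ contributes at least one pair, so the same bound counts the admissible $t$.
\end{itemize}
To upgrade $B^\varepsilon$ to a power of $\log B$, I would rerun the proof of Theorem~\ref{thm:lopcurve} in this fixed-$P$ situation. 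The coefficient height is then $O_P(1)$, so the bad primes form a fixed finite set. Taking the prime $\mathfrak p$ with $N\mathfrak p\asymp B^{1/r}$ costs only logarithmic factors, coming from the prime ideal theorem.

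\textbf{Step 2: reduction to absolute irreducibility.} If $P$ is irreducible over $K$ but not absolutely irreducible, argue as in Lemma~\ref{lem:georedcurve}.
\begin{itemize}
\item Every $K$-point of $P=0$ lies on $P_1=P_1^\sigma=0$ for two distinct geometric components, a variety of dimension $s-1$.
\item Its projection to $\mathbf t$-space lies in a proper Zariski-closed subset of $\A^s$. That subset has $O(B^{s-1})$ points in $[B]_K^s$, by the hypersurface lattice-point count (Lemma~\ref{lem:disc} style).
\end{itemize}
So we may assume $P$ is absolutely irreducible.

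\textbf{Step 3: induction for $s\ge2$.} First make a generic $\OK$-linear change of the $\mathbf t$ variables with bounded coefficients. This distorts $[B]_K^s$ only by a constant, so it is harmless.
\begin{itemize}
\item By Bertini's theorem for the pencil of lines, there is a proper closed set $E\subset\A^{s-1}$ such that $P(\mathbf t',T_s,Y)$ is absolutely irreducible, of degree $r$ in $Y$, for every $\mathbf t'\notin E$.
\item Fibers over $\mathbf t'\in E\cap[B]_K^{s-1}$ contribute at most $O(B^{s-2})\cdot O(B)=O(B^{s-1})$ points.
\item For each of the $O(B^{s-1})$ remaining $\mathbf t'$, apply Step~1 to the two-variable polynomial $P(\mathbf t',T_s,Y)$. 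This gives $\ll B^{1/r}(\log B)^{\nu}$ values of $t_s$, with constant uniform in $\mathbf t'$.
\item Summing the fiber bounds gives $B^{s-1+1/r}(\log B)^\nu$.
\end{itemize}

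\textbf{Main obstacle: uniformity in the fibers.} The fiber polynomials have coefficient height $\asymp\log B$, not $O(1)$. Theorem~\ref{thm:lopcurve} is uniform in the coefficients, but only up to a factor $T^\varepsilon$. Recovering a genuine logarithmic loss therefore needs the quantitative form in which the dependence on $\varepsilon$ is tracked, together with Lemma~\ref{lem:coeffdichotomy}. The point is that bad primes for a fiber divide a Noether form of height $O(\log B)$. There are then $O(\log B/\log\log B)$ of them, so excluding them costs at most a power of $\log B$.

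I expect this bookkeeping to be the delicate step. If it fails, I would accept the weaker bound $B^{s-1+1/r+\varepsilon}$ and instead cite \cite[Theorem~5.5]{ParedesSasykHIT} for the logarithmic refinement.
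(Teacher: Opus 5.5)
The paper gives no proof of this statement; it imports it from \cite[Theorem~5.5]{ParedesSasykHIT}. There, as the subsequent remark says, the one-parameter case is a direct global-field determinant argument, and several parameters are handled by induction and Kronecker substitution. Your fibering route is genuinely different. It stays inside the paper, and the exponent $1/r$ drops out of Theorem~\ref{thm:lopcurve} from one observation: $Y^r$ occurs in $P$, so $\log T\ge r\log B_2$ whatever $B_2$ is. With the repairs below it proves the bound with $B^{\varepsilon}$ in place of $(\log B)^{\nu}$.

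\textbf{The gap.} Your argument does not prove the statement as written, and the logarithmic upgrade is not the bookkeeping you describe. The factor $T^\varepsilon$ in Theorem~\ref{thm:lopcurve} does not come from finding a good prime. It is the enlargement $N\mathfrak p\ge Q\,T^{\varepsilon}$ needed to close the determinant squeeze with a \emph{bounded} $R=R(K,D,\varepsilon)$.
\begin{itemize}
\item The left side of the squeeze is $\frac{R(R-1)}2\log N\mathfrak p$, not $\frac{R^2}2\log N\mathfrak p$.
\item The $O_{D,\mathbf B}(R)$ term in \eqref{eq:sumlambda-curve} is in general of order $R\log B$.
\item Hence $\Delta=0$ is forced only once $\log N\mathfrak p-dw_1w_2/t\gg(\log B)/R$.
\end{itemize}
In the critical case $Q\asymp B^{1/r}$ (e.g.\ $P=Y^r-g(T)$), taking $N\mathfrak p\asymp B^{1/r}$ as you propose closes the squeeze only if $R\gg\log B$. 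Then the auxiliary polynomials have degree growing like $\log B$, and B\'ezout gives $O(\log B)$ rather than $O(1)$ points per residue class. Every lower-order term in Lemma~\ref{lem:weightedcurve} must also be made explicit and uniform over all fibres. That logarithmic refinement of the Bombieri--Pila argument is what \cite[Theorem~5.5]{ParedesSasykHIT} supplies, and your fallback of citing it is exactly what the paper does. The bad primes, by contrast, cost nothing: a nonzero Noether form of logarithmic height $O(\log B)$ has only $O_{K,P}(1)$ prime divisors of norm $\ge B^{1/r}$.

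\textbf{Repairs needed even for the $B^{\varepsilon}$ version.}
\begin{enumerate}
\item Cauchy's bound does not give $H_\infty(y)\ll B^{e/(dr)}$: the irreducible polynomial $Y^2+T^eY+1$ has a root of size $\asymp|t|^e$. Use $B_2\asymp_P B^{e/d}$ instead. Step~1 survives, since only $\log T\ge r\log B_2$ and $\log T\ll_P\log B$ are used.
\item The counts $O(B^{s-1})$ in Step~2 and $O(B^{s-2})$ for $E\cap[B]_K^{s-1}$ cannot be taken from Lemma~\ref{lem:disc} or Lemma~\ref{lem:polyzeros}. Those save only one real coordinate, giving $B^{m-1/d}$ points of $[B]_K^m$ on a hypersurface. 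The fibres over $E$ would then contribute $B^{s-1/d}$, which exceeds $B^{s-1+1/r}$ for every $d\ge2$ except $d=r=2$. Use instead the Schwartz--Zippel lemma over $K$ with the finite set $S=[B]_K$: a nonzero $g\in K[T_1,\dots,T_m]$ has at most $\deg g\cdot|S|^{m-1}\ll B^{m-1}$ zeros in $S^m$.
\item The lines $\mathbf t'=\mathrm{const}$ are a family of parallel lines, not a pencil. What you need is that, for a generic direction, the preimage under the finite map $\{P=0\}\to\mathbb A^s$ of a generic line in that direction is geometrically irreducible. This follows from Bertini for general lines together with constructibility of the locus of geometrically irreducible fibres. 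The change of variables should be unimodular, for instance $T_i\mapsto T_i+c_iT_s$ with bounded $c_i\in\mathbb Z$, so that integrality and the box are preserved. The example $P=Y^2-T_1$ shows the change cannot be omitted.
\end{enumerate}

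With these repairs your argument gives the present theorem with $B^{\varepsilon}$ in place of $(\log B)^{\nu}$. Fed into Theorem~\ref{thm:fixedG}, that still yields all its exponent comparisons, but not the stated logarithmic form.
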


\begin{remark}
Paredes--Sasyk\cite{ParedesSasykHIT} prove the one-parameter estimate with exponent $1/r$ by a
global-field determinant method and record the multivariable extension by
induction and Kronecker substitution.  Their normalization is chosen so that
$|[B]_K|\asymp B$; this is why the exponent in Theorem~\ref{thm:PSmulti} does
not contain the degree $d$ explicitly.
\end{remark}

\subsection{Proof of the fixed-subgroup theorem}
By Lemma~\ref{lem:heightcomparison}, our coefficient box
$H_{\mathcal B}(A_i)\le N$ is contained in $[C_KN^d]_K$ for a fixed constant
$C_K$.  Apply Theorem~\ref{thm:PSmulti} to the irreducible polynomial
$\Phi_G(\mathbf A,Y)$ of Proposition~\ref{prop:resolvent}, with $s=n$ and
$r=m$.  Every polynomial counted in Theorem~\ref{thm:fixedG} gives an
integral root of the resolvent.  Hence
\[
 \#\{f:G_f\subseteq gGg^{-1}\}
 \ll (N^d)^{n-1+1/m}(\log(2N))^{\nu},
\]
which is the desired estimate.

\section{Group-theoretic consequences}
We record the elementary subgroup-index facts needed for
Corollaries~\ref{cor:hierarchy} and~\ref{cor:reducible}.

\begin{lemma}[Low-index subgroups]\label{lem:lowindex}
Let $n\ge5$.
\begin{enumerate}[label=(\roman*)]
\item If $H<S_n$ is proper and $H\not\subseteq A_n$, then $[S_n:H]\ge n$.
\item If $H<A_n$ is proper, then $[A_n:H]\ge n$, and consequently
      $[S_n:H]\ge2n$.
\item If $H<S_n$ is intransitive, then $[S_n:H]\ge n$ for every $n\ge2$.
\end{enumerate}
\end{lemma}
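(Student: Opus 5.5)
The plan is to derive all three parts from the classical description of low-index subgroups of $S_n$ and $A_n$. Throughout, I use the fact that the only proper normal subgroups of $S_n$ for $n\ge5$ are $1$ and $A_n$, and that $A_n$ is simple.

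\emph{Part (iii).} This part is elementary and I would dispose of it first. If $H$ is intransitive, it preserves a nontrivial partition $\{1,\dots,n\}=U\sqcup U^c$ with $|U|=k$ and $1\le k\le n-1$. Hence $H\subseteq S_k\times S_{n-k}$, and
\[
 [S_n:H]\ge\binom nk\ge n .
\]
This holds for every $n\ge2$.

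\emph{Part (ii).} Let $H<A_n$ be proper with $m=[A_n:H]$. The left-multiplication action of $A_n$ on $A_n/H$ gives a homomorphism $A_n\to S_m$. This map is nontrivial because $H\ne A_n$, so it is injective by simplicity of $A_n$ for $n\ge5$. If $m<n$, then $n!/2$ divides $m!$, which is impossible since $m!\le(n-1)!<n!/2$. Hence $m\ge n$, and therefore
\[
 [S_n:H]=2[A_n:H]\ge2n .
\]

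\emph{Part (i).} Let $H<S_n$ be proper with $H\not\subseteq A_n$, and set $m=[S_n:H]$. The coset action gives $\phi:S_n\to S_m$ whose kernel $\ker\phi$ is a normal subgroup contained in $H$. Since $\ker\phi\subseteq H\ne S_n$, the kernel is either $1$ or $A_n$. It cannot be $A_n$: then $A_n\subseteq H$, and $H\not\subseteq A_n$ would force $H=S_n$. So $\phi$ is injective, and if $m<n$ we would need $n!\mid m!$, which is absurd. Hence $m\ge n$.

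The statement is a standard fact, so I do not expect a real obstacle. The only point needing care is the kernel analysis in (i): one must use the hypothesis $H\not\subseteq A_n$ to rule out $\ker\phi=A_n$, since for $H=A_n$ itself the index is $2$ and the bound fails. In (ii) the key input is the simplicity of $A_n$ for $n\ge5$, which I take as known; a self-contained alternative would be Jordan's bound on primitive groups, but the coset-action argument is cleaner.
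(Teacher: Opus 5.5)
Your proof is correct. Part (iii) is the same argument as the paper's. The paper takes the smallest orbit, of size $k\le n/2$, while you take any orbit of size $1\le k\le n-1$; since $\binom nk=\binom n{n-k}$, this is immaterial.

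For (i) and (ii) the routes differ. The paper gives no argument of its own: it cites the classical minimal-index theorems for $S_n$ and $A_n$ (Dixon--Mortimer, Chapter~5) and remarks that equality is attained by point stabilizers. You instead prove the needed inequalities directly. You let the group act on the cosets of $H$ and note that the kernel is a normal subgroup contained in $H$. In (ii), simplicity of $A_n$ makes this action faithful. In (i), faithfulness follows because $1$ and $A_n$ are the only proper normal subgroups of $S_n$, and $H\not\subseteq A_n$ rules out $A_n$. Then $n!\mid m!$, respectively $\tfrac{n!}{2}\mid m!$, forces $m\ge n$.

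Your version buys a self-contained proof that shows exactly where $n\ge5$ enters. For $n=4$, the subgroup $D_4<S_4$ of index $3$ contains transpositions, and its core is the normal Klein four-group. It also delivers precisely the inequalities used in Corollaries~\ref{cor:hierarchy} and~\ref{cor:reducible}. What the citation supplies in addition is structural information about subgroups of index exactly $n$: they are point stabilizers, apart from the exotic transitive $S_5<S_6$. The paper mentions this only as a sharpness remark and never uses it.
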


\begin{proof}
Parts (i) and (ii) are the classical minimal-index theorems for $S_n$ and
$A_n$; see, for example, \cite[Chapter~5]{DixonMortimer}; equality is attained by a point stabilizer (with the familiar outer
$S_5<S_6$ exception still of index $6=n$).  For (iii), if the smallest orbit
has size $k$ with $1\le k\le n/2$, then $H$ is contained in a conjugate of
$S_k\times S_{n-k}$, whose index is $\binom nk\ge n$.
\end{proof}

\begin{proof}[Proof of Corollary~\ref{cor:hierarchy}]
For fixed $n$ there are only finitely many conjugacy classes of subgroups of
$S_n$.  Apply Theorem~\ref{thm:fixedG} to each relevant subgroup and use
Lemma~\ref{lem:lowindex}.  The largest exponent occurs at the smallest
allowed index.
\end{proof}

\begin{proof}[Proof of Corollary~\ref{cor:reducible}]
For a separable reducible polynomial the Galois action on its roots is
intransitive.  Sum Theorem~\ref{thm:fixedG} over the finitely many conjugacy
classes of intransitive subgroups and apply Lemma~\ref{lem:lowindex}(iii).
The nonseparable contribution is Lemma~\ref{lem:disc}.
\end{proof}

\subsection{Low-degree examples}
For illustration, the subgroup theorem gives the following exponents for the
proper transitive subgroups in degrees $3,4,5$.  The displayed exponent $e_G$
means an upper bound $N^{de_G}(\log N)^{O(1)}$.
\[
\begin{array}{c|c|c}
 n & G & e_G=n-1+1/[S_n:G]\\ \hline
3&A_3&5/2\\ \hline
4&A_4&7/2\\
4&D_4&10/3\\
4&V_4,\ C_4&19/6\\ \hline
5&A_5&9/2\\
5&\mathrm{AGL}_1(5)&25/6\\
5&D_5&49/12\\
5&C_5&97/24
\end{array}
\]
Thus the subgroup-sensitive theorem gives a strict improvement over the
large-sieve exponent $n-1/2$ for every group in the table except $A_n$.

\section{Comparable rectangular boxes, congruence classes, and uniform local conditions}
The coefficient-box argument is stable under bounded eccentricity; the following consequence is sufficient for the applications in this paper.

For constants $0<c\le C$, suppose every integral-basis coordinate has range $B_{i,j}$ satisfying $cN\le B_{i,j}\le CN$.  The resulting rectangle is contained in, and contains, isotropic boxes of comparable side length.

\begin{coroll}[Comparable rectangular boxes]\label{cor:rectvdw}
\red{For fixed $K,n,c,C$, the number of separable polynomials in such a rectangle with Galois group different from $S_n$ is
\[
 O_{K,n,c,C}\bigl(N^{dn-d/2}\log(2N)\bigr).
\]
The full family has the additional discriminant-zero contribution $O_{K,n,c,C}(N^{dn-1})$.}
\end{coroll}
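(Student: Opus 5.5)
The plan is to rerun the proof of Theorem~\ref{thm:mainsep} with the isotropic box replaced by the rectangle, and to note that only the large sieve and the discriminant-zero count actually see the shape of the box. Let $\mathcal R$ be the set of integer vectors $(a_{i,j})\in\Z^{dn}$ with $|a_{i,j}|\le B_{i,j}$, where $a_{i,j}$ is the $j$th integral-basis coordinate of the $i$th coefficient. Since $B_{i,j}\le CN$, we have $\mathcal R\subset[-\lceil CN\rceil,\lceil CN\rceil]^{dn}$, and $\mathcal R$ contains the isotropic box of side $\lfloor cN\rfloor$. Only the containment is needed for an upper bound.

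For the first claim, fix a proper subgroup $G<S_n$. Choose the omitted class $C_{r(G)}$ from Lemma~\ref{lem:omitted}, and let $A_G\subset\mathcal R$ be the separable polynomials with $G_f$ contained in a conjugate of $G$. Exactly as before, Lemma~\ref{lem:frob} together with the non-squarefree reduction at primes dividing $\disc(f)$ shows that $A_G$ avoids $\Omega_G(\mathfrak p)$, the squarefree reductions of type $r(G)$, for every $\mathfrak p$. Since $A_G\subset[-N',N']^{dn}$ with $N'=\lceil CN\rceil$, Proposition~\ref{prop:ls} applies with $N'$ in place of $N$. Lemma~\ref{lem:finitefield} and the prime ideal theorem give $L(Q)\gg_{K,n}Q/\log Q$ exactly as before, because $\Omega_G(\mathfrak p)$ does not depend on the box. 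Hence
\[
 |A_G|\ll_{K,n}\bigl((CN)^{dn}+Q^{2n}\bigr)\frac{\log Q}{Q}.
\]
Taking $Q=N^{d/2}$ gives $O_{K,n,C}(N^{dn-d/2}\log(2N))$. Summing over the finitely many conjugacy classes of proper subgroups of $S_n$ proves the first claim. The lower constant $c$ enters only to make the statement symmetric, and it would matter only if one wanted a density or lower-bound statement.

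For the discriminant-zero contribution, I apply the same containment to Lemma~\ref{lem:disc}. The discriminant is a nonzero polynomial in the $dn$ integer coordinates, and the standard hypersurface induction bounds its zeros in $[-N',N']^{dn}$ by $O_{K,n}(N'^{dn-1})=O_{K,n,C}(N^{dn-1})$.

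I do not expect a genuine obstacle. The one point to check is that the large sieve in Proposition~\ref{prop:ls} is stated for subsets of a cube rather than for the exact box, so passing to the enclosing cube is legitimate. One must also confirm that the constant in $L(Q)\gg Q/\log Q$ depends only on $K,n$, which holds because $\Omega_G(\mathfrak p)$ is defined purely modulo $\mathfrak p$. If a sharper dependence on the individual $B_{i,j}$ were wanted, I would instead use the multidimensional large sieve with unequal side lengths, which has $\prod B_{i,j}+Q^{2n}$ in place of $N^{dn}+Q^{2n}$. For comparable sides this changes only constants.
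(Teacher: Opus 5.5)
Your proposal is correct and is essentially the paper's argument: the paper encloses the rectangle in the isotropic box of side $CN$ and applies Theorems~\ref{thm:mainsep}--\ref{thm:mainall}, with Lemma~\ref{lem:disc} supplying the discriminant-zero term. Rerunning the large sieve with $N'=\lceil CN\rceil$, as you do, amounts to quoting Theorem~\ref{thm:mainsep} for $\mathscr P_{n,N'}(K)$, and you are right that the lower constant $c$ plays no role in the upper bound.
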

\begin{proof}
\red{Enclose the rectangle in an isotropic box of side $CN$ and apply Theorems~\ref{thm:mainsep}--\ref{thm:mainall}.}
\end{proof}

Let $\mathfrak m$ be a nonzero ideal and let $\mathbf a_0\in(\OK/\mathfrak m)^n$.  We write
\[
 \mathscr P_{n,N}(K;\mathfrak m,\mathbf a_0)
 =\{f\in\mathscr P_{n,N}(K):(\alpha_0,\ldots,\alpha_{n-1})\equiv\mathbf a_0\pmod{\mathfrak m}\}.
\]

\begin{thm}[Fixed coefficient congruence classes]\label{thm:congclass}
\red{Fix $K,n,\mathfrak m$ and $\mathbf a_0$.  Then
\[
 \#\{f\in\mathscr P_{n,N}(K;\mathfrak m,\mathbf a_0):f\text{ separable},\ G_f\ne S_n\}
 \ll_{K,n,\mathfrak m}N^{dn-d/2}\log(2N).
\]
If the congruence class forces the discriminant to vanish identically, the separability condition makes the family empty; otherwise the full-box version has the additional term $O_{K,n,\mathfrak m}(N^{dn-1})$.}
\end{thm}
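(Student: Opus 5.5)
The bound follows simply by inclusion: the congruence class is a subset of $\mathscr P_{n,N}(K)$, so Theorem~\ref{thm:mainsep} gives
\[
 \#\{f\in\mathscr P_{n,N}(K;\mathfrak m,\mathbf a_0):f\text{ separable},\ G_f\ne S_n\}
 \le\#\{f\in\mathscr P_{n,N}(K):f\text{ separable},\ G_f\ne S_n\}
 \ll_{K,n}N^{dn-d/2}\log(2N),
\]
and the implied constant does not even depend on $\mathfrak m$ or $\mathbf a_0$. Since the theorem only asserts an upper bound of the same order as the unrestricted case (not a saving by the density $(N\mathfrak m)^{-n}$ of the class), no sieve over the sublattice is needed.

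For the full-box statement we use the same inclusion with Lemma~\ref{lem:disc}. The polynomials of $\mathscr P_{n,N}(K;\mathfrak m,\mathbf a_0)$ with $\disc(f)=0$ number at most the corresponding count in $\mathscr P_{n,N}(K)$, which is $O_{K,n}(N^{dn-1})$. Adding this to the separable bound gives the total count of non-$S_n$ polynomials in the class.

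The remaining clause concerns a class in which the discriminant vanishes at every member of the class lying in the box. Then no member is separable, so the separable count is $0$, and the family of separable non-$S_n$ polynomials is empty. In that case the whole class lies in the discriminant-zero locus, which is again $O(N^{dn-1})$ by Lemma~\ref{lem:disc}, so the full-box formula still holds.

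The only point needing care is this degenerate case, and even there the bounds hold trivially. Should one want a refinement with the factor $(N\mathfrak m)^{-n}$, one would instead apply Proposition~\ref{prop:ls} to the translated lattice $\mathbf a_0+\mathfrak m^n$, using only primes $\mathfrak p\nmid\mathfrak m$, where the congruence condition and the reduction modulo $\mathfrak p$ are independent by the Chinese remainder theorem. That refinement is not required for the stated estimate.
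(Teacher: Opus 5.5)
Your proof is correct, but it takes a different and more elementary route than the paper.

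The paper re-runs the large sieve. It views the coefficient vectors as the coset $\mathbf a_0+\mathfrak m^n$ and applies Proposition~\ref{prop:ls} only at prime ideals $\mathfrak p\nmid\mathfrak m$. It then notes that the omitted splitting-type sets keep their positive densities, and invokes the prime ideal theorem with finitely many primes removed to recover $L(Q)\gg Q/\log Q$.

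You instead use monotonicity. Since $\mathscr P_{n,N}(K;\mathfrak m,\mathbf a_0)\subseteq\mathscr P_{n,N}(K)$, Theorem~\ref{thm:mainsep} and Lemma~\ref{lem:disc} apply verbatim. Your implied constants are independent of $\mathfrak m$ and $\mathbf a_0$, which is slightly stronger than stated.

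Nothing is lost, because the theorem claims no saving by the density $(\Norm\mathfrak m)^{-n}$. In fact Proposition~\ref{prop:ls} as stated keeps the full-box term $N^{dn}+Q^{2n}$, so the paper's sieve on the coset yields exactly the bound your inclusion argument gives. The restriction to $\mathfrak p\nmid\mathfrak m$ would matter only if one rescaled to the lattice $\mathfrak m^n$ to extract the density factor, as in your closing remark.

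One small addition to the degenerate clause: $\mathbf a_0+\mathfrak m^n$ is a translate of a full-rank lattice, hence Zariski dense. The discriminant is a nonzero polynomial, so it cannot vanish identically on the class, and that case is vacuous. Your treatment of it is harmless either way.
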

\begin{proof}
\red{The coefficient vectors form a coset of the sublattice $\mathfrak m^n\subset\OK^n$.  Apply the large sieve only at prime ideals coprime to $\mathfrak m$.  The omitted splitting-type sets retain the same positive densities, and the prime ideal theorem with finitely many primes removed\cite[Chapter~VII, \S5]{Neukirch} gives $L(Q)\gg Q/\log Q$.}
\end{proof}

\begin{prop}[A general local-exclusion principle]\label{prop:localexclusion}
\red{Let $\mathcal F_N\subseteq\mathscr P_{n,N}(K)$ be a family such that, for every prime ideal $\mathfrak p$ in a set $\mathcal S$ of positive prime-ideal density, its reductions avoid a set $\Omega(\mathfrak p)$ satisfying
\[
 \kappa\,q_{\mathfrak p}^{n}\le |\Omega(\mathfrak p)|\le (1-\kappa)q_{\mathfrak p}^{n}
\]
for a fixed $\kappa>0$ and all sufficiently large $\mathfrak p$.  Then
\[
 |\mathcal F_N|\ll_{K,n,\kappa,\mathcal S}N^{dn-d/2}\log(2N).
\]
The same conclusion holds in rectangular boxes with the balanced exponent of Corollary~\ref{cor:rectvdw}.}
\end{prop}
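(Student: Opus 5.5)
The plan is to run the proof of Theorem~\ref{thm:mainsep} with Proposition~\ref{prop:ls}, using the given sets $\Omega(\mathfrak p)$ for $\mathfrak p\in\mathcal S$ and $\Omega(\mathfrak p)=\emptyset$ for all other prime ideals. Write $\mathcal F_N$ as a set $A\subset[-N,N]^{dn}\cap\Z^{dn}$ in integral-basis coordinates. Each prime ideal outside $\mathcal S$ then contributes zero to $L(Q)$, so the sieve hypothesis holds for every $\mathfrak p$ with $q_{\mathfrak p}\le Q$. For the finitely many small $\mathfrak p\in\mathcal S$ where the density bounds are not yet valid, also set $\Omega(\mathfrak p)=\emptyset$; shrinking $\Omega$ keeps the avoidance hypothesis true.

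Next comes the lower bound for $L(Q)$. For large $\mathfrak p\in\mathcal S$ the two-sided bound gives
\[
 \frac{|\Omega(\mathfrak p)|}{q_{\mathfrak p}^n-|\Omega(\mathfrak p)|}\ge\frac{\kappa q_{\mathfrak p}^n}{q_{\mathfrak p}^n}=\kappa .
\]
The upper bound $|\Omega(\mathfrak p)|\le(1-\kappa)q_{\mathfrak p}^n$ keeps the denominator positive, so this term is well defined. Hence
\[
 L(Q)\ge\kappa\,\#\{\mathfrak p\in\mathcal S:q_{\mathfrak p}\le Q\}-O(1).
\]
I read ``positive prime-ideal density'' as meaning this count is $\gg_{\mathcal S}Q/\log Q$ for large $Q$. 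This is relative density against the prime ideal theorem \cite[Chapter~VII, \S5]{Neukirch}. If only a lower or Dirichlet density were intended, I would take the hypothesis as stated for large $Q$, since that is the only step where it is used. Thus $L(Q)\gg_{\kappa,\mathcal S}Q/\log Q$.

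Proposition~\ref{prop:ls} then gives $|\mathcal F_N|\ll(N^{dn}+Q^{2n})\log Q/Q$. Take $Q=N^{d/2}$. Then $Q^{2n}=N^{dn}$ and $\log Q\ll_K\log(2N)$, which yields $N^{dn-d/2}\log(2N)$. For small $N$, where $Q$ lies below the range of the density and large-prime hypotheses, the trivial bound $|\mathcal F_N|\le(2N+1)^{dn}$ is absorbed into the constant.

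For the rectangular statement, enclose a rectangle with $cN\le B_{i,j}\le CN$ in the isotropic box of side $CN$, as in Corollary~\ref{cor:rectvdw}. The avoidance hypothesis is inherited by subsets, so the same bound holds with constants depending on $c,C$.

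The main obstacle is only the interpretation of ``positive density'' and making sure the lower bound $\kappa$ really controls $L(Q)$. Everything else is formal. In particular, the upper bound $(1-\kappa)q^n$ matters only for keeping the denominators in $L(Q)$ positive, not for the size of the bound.
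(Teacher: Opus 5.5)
Your proposal is correct and follows essentially the same route as the paper. The paper's proof notes that the hypotheses give $L_{\mathcal S}(Q)\gg Q/\log Q$ and applies Proposition~\ref{prop:ls} with $Q=N^{d/2}$; your extra details are exactly the steps it leaves implicit: setting $\Omega(\mathfrak p)=\emptyset$ off $\mathcal S$ and for small $\mathfrak p$, the termwise bound $|\Omega(\mathfrak p)|/(q_{\mathfrak p}^n-|\Omega(\mathfrak p)|)\ge\kappa$, reading positive density as a lower bound $\gg Q/\log Q$, the small-$N$ case, and enclosing rectangles in an isotropic box.
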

\begin{proof}
\red{The assumptions imply $L_{\mathcal S}(Q)\gg Q/\log Q$.  Proposition~\ref{prop:ls} with $Q=N^{d/2}$ gives the result.}
\end{proof}

\begin{remark}
\red{This proposition applies not only to an omitted Frobenius class but also to finitely many simultaneous local requirements, prescribed factorization patterns, and local solubility obstructions, provided one obtains a uniform positive excluded density.}
\end{remark}

\section{Quantitative consequences for random polynomials}
\red{The preceding bounds immediately yield probabilistic statements.  We record them because they are often the form needed in applications to averaged arithmetic functions.}

Choose a polynomial uniformly from $\mathscr P_{n,N}(K)$ and denote the corresponding probability by $\mathbb P_N$.

\begin{coroll}[Quantitative convergence to $S_n$]\label{cor:prob}
\red{For fixed $K$ and $n$,
\[
 \mathbb P_N(f\text{ separable and }G_f\ne S_n)
 \ll_{K,n}N^{-d/2}\log(2N),
\]
and
\[
 \mathbb P_N(G_f\ne S_n)
 \ll_{K,n}N^{-d/2}\log(2N)+N^{-1}.
\]}
\end{coroll}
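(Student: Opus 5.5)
The plan is to divide the counts in Theorem~\ref{thm:mainsep} and Theorem~\ref{thm:mainall} by the total number of polynomials. Since $\mathbb P_N$ is the uniform measure on $\mathscr P_{n,N}(K)$, each probability is a ratio whose denominator is $|\mathscr P_{n,N}(K)|=(2N+1)^{dn}$, computed in the introduction. For $N\ge1$ this satisfies $(2N+1)^{dn}\ge N^{dn}$, so it suffices to bound the numerators by $N^{dn}$ times the claimed quantities.

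For the first inequality, the event ``$f$ separable and $G_f\ne S_n$'' is exactly the set counted in Theorem~\ref{thm:mainsep}. That theorem bounds it by $\ll_{K,n}N^{dn-d/2}\log(2N)$. Dividing by $(2N+1)^{dn}\ge N^{dn}$ gives $\mathbb P_N\ll_{K,n}N^{-d/2}\log(2N)$.

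For the second inequality, the event $G_f\ne S_n$ has to be read with the convention built into $\mathscr P^0_{n,N}(K)$: the polynomials counted as having group $S_n$ are the separable ones with $G_f\cong S_n$. So the event is the complement $\mathscr P_{n,N}(K)\setminus\mathscr P^0_{n,N}(K)$, which consists of the non-separable polynomials together with the separable ones with $G_f\ne S_n$. Theorem~\ref{thm:mainall} bounds this complement by $N^{dn-d/2}\log(2N)+N^{dn-1}$; the second term comes from Lemma~\ref{lem:disc}. Dividing by $N^{dn}$ gives $N^{-d/2}\log(2N)+N^{-1}$.

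The only step needing care is the interpretation of $G_f$ for inseparable $f$. I will state explicitly that such $f$ are counted in the event $G_f\ne S_n$, matching the definition of $\mathscr P^0_{n,N}(K)$, so that no separate convention is needed. With that settled, the corollary follows directly from the earlier theorems.
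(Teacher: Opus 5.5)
Your proposal is correct and is exactly the paper's argument: divide the bounds of Theorems~\ref{thm:mainsep} and~\ref{thm:mainall} by $|\mathscr P_{n,N}(K)|=(2N+1)^{dn}\ge N^{dn}$. Your reading of the event $G_f\ne S_n$ as the complement of $\mathscr P^0_{n,N}(K)$ matches the paper, and no separate convention is needed: an inseparable $f$ has at most $n-1$ distinct roots, so $|G_f|\le (n-1)!<n!$.
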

\begin{proof}
\red{Divide Theorems~\ref{thm:mainsep} and~\ref{thm:mainall} by $(2N+1)^{dn}$.}
\end{proof}

\begin{coroll}[Moments of the exceptional indicator]\label{cor:indicator}
\red{For every real $k>0$,
\[
 \mathbb E_N\bigl[\mathbf 1_{\{G_f\ne S_n\}}^k\bigr]
 \ll_{K,n}N^{-d/2}\log(2N)+N^{-1}.
\]
Consequently, for any sequence $N_j$ satisfying
\[
 \sum_j\bigl(N_j^{-d/2}\log(2N_j)+N_j^{-1}\bigr)<\infty,
\]
if one samples independently, for each $j$, a polynomial uniformly from $\mathscr P_{n,N_j}(K)$, then the first Borel--Cantelli lemma shows that almost surely only finitely many sampled polynomials have Galois group different from $S_n$.}
\end{coroll}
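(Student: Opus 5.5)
The statement is Corollary~\ref{cor:indicator}. It has two parts: a moment bound, and a Borel--Cantelli consequence. Both follow formally from Corollary~\ref{cor:prob}.

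For the moment bound, the key observation is that an indicator function only takes the values $0$ and $1$. Hence for every real $k>0$ we have $\mathbf 1_{\{G_f\ne S_n\}}^k=\mathbf 1_{\{G_f\ne S_n\}}$ pointwise, since $0^k=0$ and $1^k=1$. Taking expectations with respect to $\mathbb P_N$ gives
\[
 \mathbb E_N\bigl[\mathbf 1_{\{G_f\ne S_n\}}^k\bigr]=\mathbb P_N(G_f\ne S_n).
\]
The second inequality of Corollary~\ref{cor:prob} then gives the bound $N^{-d/2}\log(2N)+N^{-1}$. Some care is needed with the convention for inseparable $f$: $G_f$ should be understood so that $\{G_f\ne S_n\}$ is the complement of $\mathscr P^0_{n,N}(K)$. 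This is exactly the event controlled by Theorem~\ref{thm:mainall}, and hence by the second bound of Corollary~\ref{cor:prob}. The implied constant depends only on $K$ and $n$, not on $k$.

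For the almost-sure statement, I will work on the product probability space of the independent samples $f_j\in\mathscr P_{n,N_j}(K)$. Let $E_j=\{G_{f_j}\ne S_n\}$. The moment bound with $N=N_j$ gives
\[
 \Pr(E_j)\ll_{K,n}N_j^{-d/2}\log(2N_j)+N_j^{-1}.
\]
The hypothesis then yields $\sum_j\Pr(E_j)<\infty$. The first Borel--Cantelli lemma gives $\Pr(E_j\text{ infinitely often})=0$, which is the claim. Independence is not actually needed for the first Borel--Cantelli lemma; it only serves to make the joint space well defined.

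There is no real obstacle here. The only points needing care are the convention for $G_f$ on the inseparable locus and the uniformity of the implied constant. The constant is uniform in $N$ by Corollary~\ref{cor:prob}, so the summability transfers directly.
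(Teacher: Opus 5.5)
Your proposal is correct and follows the paper's proof: $\mathbf 1^k=\mathbf 1$ reduces the moment to $\mathbb P_N(G_f\ne S_n)$, which the second estimate of Corollary~\ref{cor:prob} bounds. Summability, with a constant uniform in $N$, then gives the almost-sure statement via the first Borel--Cantelli lemma. Your worry about the convention for inseparable $f$ is harmless but unnecessary: in characteristic zero an inseparable $f$ has fewer than $n$ distinct roots, so its Galois group has order less than $n!$, and $\{G_f\ne S_n\}$ is automatically the complement of $\mathscr P^0_{n,N}(K)$.
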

\begin{proof}
\red{The indicator satisfies $\mathbf 1^k=\mathbf 1$.  The second assertion is the first Borel--Cantelli lemma.}
\end{proof}

\begin{prop}[Uniformity under bounded translations]\label{prop:translation}
\red{Let $\beta_0,\ldots,\beta_{n-1}\in\OK$ satisfy $H_{\mathcal B}(\beta_i)\le N^{1-\delta}$ for some fixed $\delta>0$.  Replacing every coefficient $\alpha_i$ by $\alpha_i+\beta_i$ leaves the conclusions of Theorems~\ref{thm:mainsep} and~\ref{thm:mainall} unchanged, with constants uniform in the translations.}
\end{prop}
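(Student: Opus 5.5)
The plan is to observe that the translated family is contained in a box of essentially the same size, and then to rerun the large-sieve argument with constants that do not depend on the shift. Set $\mathcal T(\boldsymbol\beta)=\{f+\sum_i\beta_iX^i:f\in\mathscr P_{n,N}(K)\}$. For $f$ in the original box, every integral-basis coordinate of $\alpha_i+\beta_i$ has absolute value at most $N+N^{1-\delta}\le 2N$. Hence $\mathcal T(\boldsymbol\beta)$ is a subset of $\mathscr P_{n,2N}(K)$. Translation is a bijection and leaves $|\mathcal T(\boldsymbol\beta)|=(2N+1)^{dn}$ unchanged, so counting exceptional polynomials in the translated family is equivalent to counting exceptional coefficient vectors in a shifted box.

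The direct route is containment. The separable polynomials in $\mathcal T(\boldsymbol\beta)$ with $G_f\ne S_n$ form a subset of the corresponding set in $\mathscr P_{n,2N}(K)$. Theorem~\ref{thm:mainsep} then bounds their number by $\ll_{K,n}(2N)^{dn-d/2}\log(4N)\ll_{K,n}N^{dn-d/2}\log(2N)$. In the same way, Lemma~\ref{lem:disc} applied at height $2N$ gives $O_{K,n}(N^{dn-1})$ for the discriminant-zero locus, and this yields the translated version of Theorem~\ref{thm:mainall}. The implied constants are those of the theorems at height $2N$, so they depend only on $K$ and $n$, not on $\boldsymbol\beta$. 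This is exactly the uniformity claimed. The hypothesis $H_{\mathcal B}(\beta_i)\le N^{1-\delta}$ is used only to ensure $N^{1-\delta}\le N$; any translation with $H_{\mathcal B}(\beta_i)\ll N$ would suffice.

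For a more intrinsic check, I would also verify that Proposition~\ref{prop:ls} is translation invariant. A set $A$ contained in a translate of $[-N,N]^{dn}$ has additive character sums whose absolute values are unchanged by the translation, because the shift multiplies each character sum by a unimodular constant. The local condition ``avoids $\Omega(\mathfrak p)$'' is also a statement about the actual coefficient vectors, and Lemma~\ref{lem:frob} applies to them directly. The sieve inequality therefore holds verbatim with the same constants. With $Q=N^{d/2}$ as before, this reproduces the bound.

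The only point that needs care is the discriminant-zero count, Lemma~\ref{lem:disc}, for a shifted box. The ``standard induction'' proving it bounds the number of integer zeros of a fixed nonzero polynomial in any box of side $\ll N$ by $O(N^{dn-1})$, and the bound depends only on the degree, not on the box's position. I would either make that uniformity explicit or simply use the enclosing box $[-2N,2N]^{dn}$, which avoids the issue entirely.
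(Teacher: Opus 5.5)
Your proof is correct and uses the same two observations as the paper's one-line proof: the shifted family lies in a lattice box of side comparable to $2N$ (you enclose it in $\mathscr P_{n,2N}(K)$ and apply Theorems~\ref{thm:mainsep} and~\ref{thm:mainall} directly), and the large sieve of Proposition~\ref{prop:ls} does not depend on the center of the box. Your treatment of the discriminant-zero locus through the enclosing box also settles a point the paper's proof leaves implicit, and you are right that the hypothesis $H_{\mathcal B}(\beta_i)\le N^{1-\delta}$ is stronger than needed.
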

\begin{proof}
\red{A translated coefficient box is again a lattice box of side length $2N+O(N^{1-\delta})$.  The large-sieve constants depend only on the lattice and not on the center.}
\end{proof}

\section{Further uniform variants}
The fixed-subgroup argument is stable under the same bounded changes of the
coefficient box as the large-sieve argument.

\begin{coroll}[Fixed subgroup in a congruence class]\label{cor:fixedGcong}
Fix a nonzero ideal $\mathfrak m$ and a coefficient class
$\mathbf a_0\pmod{\mathfrak m}$.  With $m=[S_n:G]$,
\[
 \#\{f\in\mathscr P_{n,N}(K;\mathfrak m,\mathbf a_0):
       f\text{ separable},\ G_f\subseteq gGg^{-1}\}
 \ll N^{d(n-1+1/m)}(\log(2N))^{\nu}.
\]
The implicit constant may depend on $K,n,G,\mathfrak m$.
\end{coroll}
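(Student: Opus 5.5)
The plan is a reduction to Theorem~\ref{thm:fixedG}, which is proved through Proposition~\ref{prop:resolvent} and Theorem~\ref{thm:PSmulti}. Every polynomial counted in Corollary~\ref{cor:fixedGcong} is separable, lies in $\mathscr P_{n,N}(K)$, and has $G_f$ contained in a conjugate of $G$. The congruence class $\mathscr P_{n,N}(K;\mathfrak m,\mathbf a_0)$ is a subset of $\mathscr P_{n,N}(K)$, so the counted set is a subset of the set estimated in Theorem~\ref{thm:fixedG}. That theorem then gives the bound $N^{d(n-1+1/m)}(\log(2N))^{\nu}$ with an implied constant depending only on $K,n,G$, and a fortiori one allowed to depend on $\mathfrak m$ as well.

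I will nevertheless explain why no uniformity issue arises, since the statement permits dependence on $\mathfrak m$. The relevant input is Theorem~\ref{thm:PSmulti} applied to $P=\Phi_G(\mathbf A,Y)$. Membership in the congruence class is an extra condition on the parameter vector $\mathbf t=(\alpha_0,\ldots,\alpha_{n-1})$, so it can only shrink the set of $\mathbf t\in[C_KN^d]_K^n$ for which the resolvent has a root in $\OK$. The containment of the box in $[C_KN^d]_K$ via Lemma~\ref{lem:heightcomparison} is unaffected by the congruence restriction.

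If one wants the sharper statement that saves the density factor $(N\mathfrak m)^{-n}$, I would instead parametrize the class. Choose a fixed lift $\mathbf b\in\OK^n$ of $\mathbf a_0$ and a $\Z$-basis of $\mathfrak m$, and write $\alpha_i=b_i+\gamma_i$ with $\gamma_i\in\mathfrak m$. The substituted polynomial $P'(\mathbf u,Y)=\Phi_G(\mathbf b+\mathbf u,Y)$ is monic of degree $m$ in $Y$ and remains irreducible over $K(\mathbf u)$, because the substitution is an affine change of variables. Applying Theorem~\ref{thm:PSmulti} to $P'$ on a box of comparable size then gives the same exponent.

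I expect the only delicate point to be in this optional refinement. The variables $\gamma_i$ range over the ideal $\mathfrak m$ rather than over $\OK$, so Theorem~\ref{thm:PSmulti} needs a version over the lattice $\mathfrak m$. The first thing I would try is to pick $\lambda\in\mathfrak m^{-1}\cap\ldots$, or more simply to use $\mathfrak m\supseteq(M)$ for the positive integer $M=N\mathfrak m$. This covers $\mathfrak m$ by $[\mathfrak m:(M)]$ cosets of $M\OK$, and on each coset the substitution $\gamma=c+M\mathbf u$ with $\mathbf u\in\OK^n$ gives a legitimate application of Theorem~\ref{thm:PSmulti}. The resulting finitely many constants depend on $\mathfrak m$, which is permitted. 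For the corollary as stated, however, the direct inclusion argument of the first paragraph already suffices.
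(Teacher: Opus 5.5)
Your proof is correct, and its core argument is more direct than the paper's. Since $\mathscr P_{n,N}(K;\mathfrak m,\mathbf a_0)\subseteq\mathscr P_{n,N}(K)$ by definition, the counted set is contained in the set bounded in Theorem~\ref{thm:fixedG}. The corollary therefore follows at once, with an implied constant that does not even depend on $\mathfrak m$ or $\mathbf a_0$.

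The paper argues differently. It writes the coefficients as $a_i+\beta_i$ with $\beta_i\in\mathfrak m$ and expands the $\beta_i$ in a $\mathbb Z$-basis of $\mathfrak m$. It then notes that this affine change of parameters keeps the resolvent irreducible of $Y$-degree $m$, and re-applies Theorem~\ref{thm:PSmulti}. That route is the natural one if one wants to see the density of the congruence class. As written, however, it gives nothing beyond your inclusion argument. The constant in Theorem~\ref{thm:PSmulti} depends on the polynomial, which now depends on $\mathfrak m$ and on the chosen lift, so no quantified saving in $N\mathfrak m$ results. The same caveat applies to your remark about the ``sharper statement'' saving $(N\mathfrak m)^{-n}$: neither reparametrization actually delivers that saving.

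There is also a technical point in the paper's version. Integer coordinates relative to a $\mathbb Z$-basis of $\mathfrak m$ are not $\mathcal O_K$-valued parameters in $[B]_K$, which is the form Theorem~\ref{thm:PSmulti} requires. Your repair works: use $M=N\mathfrak m\in\mathfrak m$, split $\mathfrak m$ into the $M^d/N\mathfrak m$ cosets of $M\mathcal O_K$, and substitute $\gamma=c+M\mathbf u$ with $\mathbf u\in\mathcal O_K^n$. This makes the paper's reparametrization fit the theorem literally.

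For the corollary as stated, your first paragraph is a complete proof and everything after it is optional.
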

\begin{proof}
Write each coefficient as $a_i+\beta_i$ with $\beta_i\in\mathfrak m$ and
choose a fixed $\mathbb Z$-basis of $\mathfrak m$.  The resulting affine
change of parameters has bounded determinant and transforms the generic
resolvent into an irreducible polynomial of the same $Y$-degree $m$.
Apply Theorem~\ref{thm:PSmulti}.
\end{proof}

\begin{coroll}[Comparable rectangular boxes]\label{cor:fixedGrect}
Suppose every integral-basis coordinate of every coefficient ranges in an
interval of length between $cN$ and $CN$, where $0<c<C$ are fixed.  Then the
fixed-subgroup bound of Theorem~\ref{thm:fixedG} remains valid with the same
exponent.
\end{coroll}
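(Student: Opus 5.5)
The plan is to reduce Corollary~\ref{cor:fixedGrect} to Theorem~\ref{thm:fixedG} by an enclosure argument, just as Corollary~\ref{cor:rectvdw} was deduced from Theorem~\ref{thm:mainsep}. The counting function is monotone in the family: it counts separable $f$ with $G_f\subseteq gGg^{-1}$ for some $g$, and this condition depends only on $f$. So it suffices to place the rectangle inside a box of the form $\mathscr P_{n,N'}(K)$, possibly after a translation, with $N'\asymp N$.

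\textbf{Step 1: reduce to a translated isotropic box.} Each integral-basis coordinate of each coefficient ranges over an interval of length at most $CN$, with an arbitrary centre $c_{i,j}\in\Z$. Write $\alpha_i=\beta_i+\gamma_i$, where $\beta_i=\sum_j c_{i,j}\omega_j$ is the fixed centre and $H_{\mathcal B}(\gamma_i)\le \lceil CN\rceil$. If the centres satisfy $H_{\mathcal B}(\beta_i)\ll N$, the rectangle already lies in $\mathscr P_{n,N'}(K)$ with $N'=O_C(N)$. Theorem~\ref{thm:fixedG} then gives the bound $\ll (N')^{d(n-1+1/m)}(\log 2N')^{\nu}\ll_{C} N^{d(n-1+1/m)}(\log 2N)^\nu$, and we are done.

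\textbf{Step 2: arbitrary centres.} The statement does not assume the centres are small, so we need uniformity in the translation. Here the plan is to argue as in the proof of Corollary~\ref{cor:fixedGcong}. Substitute $\mathbf A=\boldsymbol\beta+\boldsymbol\Gamma$ into $\Phi_G(\mathbf A,Y)$. This gives a polynomial $\Phi_G^{\boldsymbol\beta}(\boldsymbol\Gamma,Y)\in\OK[\boldsymbol\Gamma,Y]$ which is still monic of degree $m$ in $Y$. It is still irreducible over $K(\boldsymbol\Gamma)$, since a translation is a $K$-automorphism of $K(\mathbf A)$. Every counted $f$ gives $\boldsymbol\gamma$ in the box $[C_KN^d]_K^n$ (by Lemma~\ref{lem:heightcomparison}) for which $\Phi_G^{\boldsymbol\beta}(\boldsymbol\gamma,Y)$ has a root in $\OK$. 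Theorem~\ref{thm:PSmulti} with $s=n$ and $r=m$ then yields the exponent $d(n-1+1/m)$.

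\textbf{Main obstacle.} The difficulty is that the constant in Theorem~\ref{thm:PSmulti} depends on $P$, and $\Phi_G^{\boldsymbol\beta}$ has height growing with $\boldsymbol\beta$. I would first try to avoid this altogether. If the intended reading, consistent with Corollary~\ref{cor:rectvdw} and Proposition~\ref{prop:translation}, is that the rectangle is centred so that its centre has height $O(N)$, then Step~1 alone suffices.

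Otherwise, I would use the remark after Theorem~\ref{thm:PSmulti} that the dependence on $P$ is effective in its degree and height. The height of $\Phi_G^{\boldsymbol\beta}$ is polynomial in $\max H_{\mathcal B}(\beta_i)$. If the centres are $N^{O(1)}$, this polynomial loss can be absorbed into an enlarged power of $\log N$, provided the effective dependence is only logarithmic in the height, as is typical in determinant methods. Centres much larger than any power of $N$ would need this uniformity in a stronger form. I would state that case as relying on that uniformity rather than claim it unconditionally.
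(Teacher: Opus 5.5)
Your Step~1 is the paper's proof. Enclose the rectangle in an isotropic box of side $O_{c,C}(N)$ and apply Theorem~\ref{thm:fixedG} to the larger box. This works because the counted condition is a property of $f$ alone, so the count is monotone in the family. When the rectangle lies within distance $O(N)$ of the origin (the setting of Corollary~\ref{cor:rectvdw}), this is a complete argument and matches the paper exactly.

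For arbitrarily placed intervals your caution is justified, and it points to a step the paper does not justify. The paper's one-line proof begins by translating the rectangle. But the condition $G_f\subseteq gGg^{-1}$ is not invariant under $\alpha_i\mapsto\alpha_i-\beta_i$, so Theorem~\ref{thm:fixedG} applied to the translated box counts a different set of polynomials. Translation is harmless for the large-sieve bound of Theorem~\ref{thm:mainsep}, since neither the Frobenius obstruction nor the large-sieve inequality depends on the centre of the box. That is what Proposition~\ref{prop:translation} uses, and only for translations of height $N^{1-\delta}$. Nothing comparable is available for the resolvent method.

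Your Step~2 is the right way to make translation legitimate there. The polynomial $\Phi_G(\beta+\Gamma,Y)$ still lies in $\OK[\Gamma,Y]$, is monic of $Y$-degree $m$, and is irreducible over $K(\Gamma)$. You also correctly isolate the missing input: control of the $P$-dependence of the constant in Theorem~\ref{thm:PSmulti}. For centres of height $N^{O(1)}$ this dependence must be polylogarithmic in the height of $P$, and for larger centres it must be uniform in the height. The paper only asserts that this dependence ``may be made effective'', so neither its argument nor yours proves the corollary for intervals far from the origin. The clean fix is to add the hypothesis that the intervals lie in $[-C'N,C'N]$ for a fixed $C'$; your Step~1 alone then suffices. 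Otherwise the far-away case should be stated conditionally, as you propose.
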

\begin{proof}
Translate the rectangle and enclose it in an isotropic box of side $O_{c,C}(N)$.
Theorem~\ref{thm:fixedG} applies to the containing box.
\end{proof}

\section{Comparison with previous bounds}
For $K=\mathbb Q$, Bhargava\cite{Bhargava} proved the sharp van der Waerden estimate
$O_n(N^{n-1})$ for the full nonsymmetric family.  Over a general fixed number
field, Theorem~\ref{thm:mainsep} gives the weaker but uniform exponent
$dn-d/2$.  Theorem~\ref{thm:fixedG} recovers a different phenomenon: for a
specified subgroup $G$ it yields
\[
 d\left(n-1+\frac1{[S_n:G]}\right).
\]
This is the same subgroup-index pattern that arises in the rational
resolvent/determinant method, now obtained over an arbitrary fixed number
field from global-field effective Hilbert irreducibility.  In particular,
large-index subgroups are much thinner than the full exceptional set.

An earlier version of this manuscript formulated the required number-field analogues of the unequal-side-length Bombieri--Pila estimate and the surface-covering theorem as conjectural inputs.  In the present version these statements are Theorems~\ref{thm:lopcurve} and~\ref{thm:lopsurface}, proved in Section~\ref{sec:determinant} and Appendix~\ref{app:weightedglobal}; the rational antecedents are \cite{BombieriPila,BrowningHB,Browning,Salberger}.

\appendix
\section{Height comparisons and lattice-point estimates}
\red{This appendix collects the elementary geometry-of-numbers facts used throughout the paper.  Stating them explicitly prevents hidden dependence on the chosen integral basis and clarifies where unit groups do, and do not, enter the counting arguments.}

Let $\Sigma_K$ be the set of archimedean embeddings of $K$, with one embedding from each complex-conjugate pair counted with its local degree.  Put
\[
 \|\alpha\|_\infty=\max_{\sigma\in\Sigma_K}|\sigma(\alpha)|.
\]

\begin{lemma}[Comparison of coefficient and archimedean heights]\label{lem:heightcomparison}
\red{For the fixed integral basis $\mathcal B=(\omega_1,\ldots,\omega_d)$ there are positive constants $c_1,c_2$, depending only on $K$ and $\mathcal B$, such that
\[
 c_1H_{\mathcal B}(\alpha)\le \|\alpha\|_\infty\le c_2H_{\mathcal B}(\alpha)
 \qquad(\alpha\in K).
\]}
\end{lemma}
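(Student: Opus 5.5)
The plan is to compare two norms on the finite-dimensional real vector space $K\otimes_{\Q}\R$, using the fact that all norms on such a space are equivalent. Concretely, extend $H_{\mathcal B}$ to $K\otimes_\Q\R\cong\R^d$ by letting it be the sup-norm of the real coordinate vector $(a_1,\ldots,a_d)$ with respect to $\omega_1,\ldots,\omega_d$. Since $\mathcal B$ is a $\Q$-basis of $K$, it is also an $\R$-basis of $K\otimes_\Q\R$, so this is a genuine norm. Similarly, extend $\|\cdot\|_\infty$ through the Minkowski embedding $j:K\otimes_\Q\R\to\prod_{\sigma\in\Sigma_K}K_\sigma$, $\alpha\mapsto(\sigma(\alpha))_\sigma$, where $K_\sigma$ is $\R$ or $\C$. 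The restriction of both functions to $K\subset K\otimes_\Q\R$ agrees with the definitions in the statement, so it suffices to prove the inequality on all of $K\otimes_\Q\R$.

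The upper bound is immediate from the triangle inequality. Writing $\alpha=\sum_j a_j\omega_j$, we have for every $\sigma$
\[
|\sigma(\alpha)|\le\sum_j|a_j||\sigma(\omega_j)|\le\Bigl(\sum_j\|\omega_j\|_\infty\Bigr)H_{\mathcal B}(\alpha),
\]
so we may take $c_2=\sum_j\|\omega_j\|_\infty$.

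For the lower bound, I will use the classical fact that the Minkowski map $j$ is an $\R$-linear isomorphism. The matrix of $j$ in the basis $\mathcal B$, after separating real and imaginary parts at complex places, has determinant of absolute value $2^{-r_2}|\disc(\mathcal B)|^{1/2}\neq0$. Consequently $\alpha\mapsto\|\alpha\|_\infty=\max_\sigma|\sigma(\alpha)|$ is a norm on $K\otimes_\Q\R$: it vanishes only when $j(\alpha)=0$, and hence only when $\alpha=0$. Explicitly, let $M$ be the inverse of the real matrix of $j$. Each $a_j$ is then an $\R$-linear combination of the real and imaginary parts of the $\sigma(\alpha)$, each of which is bounded by $\|\alpha\|_\infty$. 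This gives $H_{\mathcal B}(\alpha)\le 2\|M\|_{\max}\,d\,\|\alpha\|_\infty$, so we may take $c_1=(2d\|M\|_{\max})^{-1}$.

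The only nontrivial input is the nonvanishing of the discriminant of $\mathcal B$, which guarantees the invertibility of $j$. This is where one must check that using one embedding from each conjugate pair loses no information, since $|\bar\sigma(\alpha)|=|\sigma(\alpha)|$. Both constants depend only on the matrix $(\sigma(\omega_j))$, and hence only on $K$ and $\mathcal B$, as claimed.
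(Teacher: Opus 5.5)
Your proposal is correct and takes essentially the paper's route: extend both heights to norms on $K\otimes_\Q\R$ and compare them, where the paper simply cites equivalence of norms. Your version is more explicit: it supplies the injectivity of the Minkowski map (via $\disc(\mathcal B)\ne0$), which is needed for $\|\cdot\|_\infty$ to be a norm and which the paper leaves implicit, and it gives concrete constants $c_1,c_2$.
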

\begin{proof}
\red{The map
\[
 \R^d\longrightarrow K\otimes_\Q\R,\qquad
 (a_1,\ldots,a_d)\longmapsto\sum_ja_j\omega_j
\]
is a linear isomorphism of finite-dimensional real normed spaces.  Any two norms on such a space are equivalent.}
\end{proof}

\begin{coroll}[Basis independence of exponents]\label{cor:basisind}
\red{Changing the integral basis changes all coefficient boxes by linear transformations of bounded operator norm and bounded inverse norm.  Consequently every exponent of $N$ in the theorems of this paper is independent of the chosen integral basis; only the implied constants change.}
\end{coroll}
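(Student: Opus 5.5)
The statement to prove is Corollary~\ref{cor:basisind}: exponents are independent of the integral basis. I will reduce it to Lemma~\ref{lem:heightcomparison} applied to two bases at once, followed by a monotonicity argument for the counting functions.

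\emph{Step 1: the change of basis.} Let $\mathcal B=(\omega_j)$ and $\mathcal B'=(\omega'_j)$ be two integral bases of $\OK$. Each is a $\Z$-basis of the same lattice. Hence the change-of-coordinates matrix $P$ with $\omega'_k=\sum_j P_{jk}\omega_j$ lies in $\mathrm{GL}_d(\Z)$, and so does its inverse.

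\emph{Step 2: comparing heights.} The coordinates transform by $a=Pa'$. Since $\|P\|$ and $\|P^{-1}\|$ are constants depending only on the two bases, we get
\[
 c\,H_{\mathcal B'}(\alpha)\le H_{\mathcal B}(\alpha)\le C\,H_{\mathcal B'}(\alpha),
\]
with $c=\|P^{-1}\|^{-1}$ and $C=\|P\|$ computed in the sup norm. Alternatively, apply Lemma~\ref{lem:heightcomparison} to both bases and compare each height with $\|\cdot\|_\infty$; this gives the same inequality.

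\emph{Step 3: sandwiching the boxes.} Applying this coefficientwise gives
\[
 \mathscr P^{\mathcal B'}_{n,N/C}(K)\subseteq\mathscr P^{\mathcal B}_{n,N}(K)\subseteq\mathscr P^{\mathcal B'}_{n,N/c}(K).
\]
The same sandwich holds for any box or rectangle defined by coordinate bounds.

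\emph{Step 4: transferring the estimates.} Every counted quantity in the theorems is monotone under inclusion of families. This covers the exceptional sets, the fixed-subgroup sets, the discriminant-zero locus, the congruence-class subfamilies, and integral points in boxes $H(x_i)\le B_i$. Every bound is of the shape $N^{\theta}(\log N)^{\nu}$, or more generally is regularly varying in the parameters, as with $\exp(d\log B_1\log B_2/\log T)$. For such bounds, rescaling $N$ by a fixed factor changes the bound only by a multiplicative constant. For the determinant estimates this is exactly the content of Lemma~\ref{lem:dyadicstability}, iterated $O_K(1)$ times. So an estimate proven for $\mathcal B'$ yields the same exponent for $\mathcal B$, with a modified constant. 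The reverse inequalities follow symmetrically.

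\emph{Main obstacle.} The only delicate point is the congruence-class statements. There the box in one basis is not literally a box in the other. The fix is the sandwich of Step 3: it encloses the image in a box of comparable size, after which Corollary~\ref{cor:rectvdw} and Corollary~\ref{cor:fixedGrect} for comparable rectangular boxes apply directly.
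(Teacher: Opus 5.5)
Your argument is correct and is essentially the paper's: the corollary is recorded without separate proof as an immediate consequence of Lemma~\ref{lem:heightcomparison} (your observation that the change-of-basis matrix lies in $\mathrm{GL}_d(\Z)$ gives the same two-sided height comparison), after which the boxes are sandwiched and the monotone, scale-stable bounds transfer with only the constants changing. The point you flag for congruence classes is not special to them, since the condition modulo $\mathfrak m$ does not depend on the basis, so Step~3 encloses those subfamilies in the corresponding congruence subfamilies for the other basis (or simply in the full boxes, whose bounds have the same exponents), without any need for the rectangular-box corollaries.
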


\begin{lemma}[Counting in ideal cosets]\label{lem:idealcoset}
\red{Let $\mathfrak a$ be a nonzero integral ideal and $\beta\in\OK$.  Then
\[
 \#\{\alpha\in\beta+\mathfrak a:H_{\mathcal B}(\alpha)\le N\}
 =\frac{(2N)^d}{\Norm\mathfrak a}+O_{K,\mathfrak a}(N^{d-1}+1).
\]
Uniformly in $\mathfrak a$, one has the upper bound
\[
 \#\{\alpha\in\beta+\mathfrak a:H_{\mathcal B}(\alpha)\le N\}
 \ll_K 1+\frac{N^d}{\Norm\mathfrak a}.
\]}
\end{lemma}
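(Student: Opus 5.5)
The plan is to work in integral-basis coordinates and treat $\beta+\mathfrak a$ as a translate of a full-rank sublattice of $\Z^d$.

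\textbf{Identify the lattice.} Under the isomorphism $\OK\cong\Z^d$ given by $\mathcal B$, the ideal $\mathfrak a$ becomes a sublattice $\Lambda_{\mathfrak a}\subseteq\Z^d$. Its index is $[\Z^d:\Lambda_{\mathfrak a}]=[\OK:\mathfrak a]=\Norm\mathfrak a$, so $\det\Lambda_{\mathfrak a}=\Norm\mathfrak a$. The condition $H_{\mathcal B}(\alpha)\le N$ says the coordinate vector lies in the cube $[-N,N]^d$. We therefore need to count points of the translate $b+\Lambda_{\mathfrak a}$ in this cube, where $b$ is the coordinate vector of $\beta$.

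\textbf{Asymptotic for fixed $\mathfrak a$.} Use the standard fundamental-domain argument. Fix a basis of $\Lambda_{\mathfrak a}$ and let $P$ be the associated half-open fundamental parallelepiped, of diameter $\delta=\delta(\mathfrak a)$ and volume $\Norm\mathfrak a$. The translates $x+P$ with $x\in b+\Lambda_{\mathfrak a}$ tile $\R^d$. Those with $x$ in the cube are contained in $[-N-\delta,N+\delta]^d$, and they cover $[-N+\delta,N-\delta]^d$. Comparing volumes gives
\[
(2N-2\delta)^d\le \Norm\mathfrak a\cdot\#\le (2N+2\delta)^d,
\]
which yields $\#=(2N)^d/\Norm\mathfrak a+O_{K,\mathfrak a}(N^{d-1}+1)$. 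The ``$+1$'' covers small $N$, where the lower bound becomes negative but the main term is $O(1)$.

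\textbf{Uniform upper bound.} Here the diameter of $P$ is not controlled, since $\mathfrak a$ could contain a very short vector together with very long ones. That is the main obstacle. The fix is to use successive minima instead of an arbitrary basis. The plan is:
\begin{enumerate}[label=\textup{(\arabic*)}]
\item Note that any nonzero $\gamma\in\mathfrak a$ satisfies $\Norm\mathfrak a\le|N_{K/\Q}(\gamma)|\ll_K\|\gamma\|_\infty^d$, using Lemma~\ref{lem:normdet} together with Lemma~\ref{lem:heightcomparison}. Hence the first minimum satisfies $\lambda_1\gg_K(\Norm\mathfrak a)^{1/d}$.
\item Since $\lambda_1\cdots\lambda_d\asymp_d\det\Lambda_{\mathfrak a}=\Norm\mathfrak a$ by Minkowski's second theorem, all the minima satisfy $\lambda_i\asymp_K(\Norm\mathfrak a)^{1/d}$. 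Ideal lattices are thus uniformly well-rounded.
\item Take a reduced basis with vectors of length $\asymp_K\lambda_i$. The fundamental domain then has diameter $\delta\ll_K(\Norm\mathfrak a)^{1/d}$.
\item Rerun the volume comparison with this $\delta$ to get $\#\le(2N+2\delta)^d/\Norm\mathfrak a$.
\end{enumerate}

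It remains to check $(N+\delta)^d/\Norm\mathfrak a\ll_K 1+N^d/\Norm\mathfrak a$, which follows by splitting into two cases:
\begin{itemize}
\item if $N\ge(\Norm\mathfrak a)^{1/d}$, then $\delta\ll N$ and the bound is $\ll N^d/\Norm\mathfrak a$;
\item if $N<(\Norm\mathfrak a)^{1/d}$, then $N+\delta\ll(\Norm\mathfrak a)^{1/d}$ and the bound is $\ll 1$.
\end{itemize}

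The translate by $\beta$ never affects any step, since the argument only uses the tiling. This gives both statements, with the uniform bound depending only on $K$.
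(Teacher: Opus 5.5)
Your proof is correct and follows the same skeleton as the paper. Both identify $\mathfrak a$ with a sublattice of $\Z^d$ of index $\Norm\mathfrak a$, obtain the asymptotic from the tiling (Lipschitz) argument, and obtain the uniform bound from a reduced basis, a covering by fundamental parallelotopes, and Minkowski's second theorem.

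The genuine difference is the input on the first minimum, and here your version is the complete one.

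\begin{itemize}
\item The paper invokes only $\lambda_1\ge c_K$, which holds for any nonzero algebraic integer, together with a boundary-cell count $O_K(1+N^{d-1}/\lambda_1^{d-1})$. Taken literally, this yields only $\ll 1+N^{d-1}+N^d/\Norm\mathfrak a$. That is not $\ll_K 1+N^d/\Norm\mathfrak a$ once $\Norm\mathfrak a$ is large compared with $N$, for instance when $\Norm\mathfrak a\asymp N^d$.
\item Your step (1) gives, for $0\ne\gamma\in\mathfrak a$, the chain $\Norm\mathfrak a\le|N_{K/\Q}(\gamma)|\le\|\gamma\|_\infty^d\ll_K H_{\mathcal B}(\gamma)^d$. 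This upgrades the paper's input to $\lambda_1\gg_K(\Norm\mathfrak a)^{1/d}$.
\item Combined with Minkowski's second theorem, it forces all $\lambda_i\asymp_K(\Norm\mathfrak a)^{1/d}$. This well-roundedness is exactly what makes the boundary term harmless: $N^{d-1}/\lambda_1^{d-1}\ll(N^d/\Norm\mathfrak a)^{(d-1)/d}\le 1+N^d/\Norm\mathfrak a$. It also lets you skip the interior/boundary split altogether, via the single enlarged-cube comparison $\#\le(2N+2\delta)^d/\Norm\mathfrak a$ with $\delta\ll_K(\Norm\mathfrak a)^{1/d}$.
\end{itemize}

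So your argument supplies the step the paper's sketch leaves out.

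One cosmetic point: for $N<\delta$ the covering argument gives no lower bound at all, rather than a negative one, since $(2N-2\delta)^d$ is positive for even $d$. In that range both sides are $O_{K,\mathfrak a}(1)$, so the asymptotic is unaffected.
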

\begin{proof}
\red{Under the fixed basis, $\mathfrak a$ is a sublattice of $\Z^d$ of index $\Norm\mathfrak a$.  The first formula is the Lipschitz principle for a translated cube and a fixed lattice.  For the uniform upper bound, choose a Minkowski-reduced basis of $\mathfrak a$ and cover the cube by fundamental parallelotopes.  The number of fully contained cells is bounded by volume divided by covolume, and boundary cells contribute $O_K(1+N^{d-1}/\lambda_1^{d-1})$; Minkowski's second theorem and $\lambda_1\ge c_K$ for nonzero algebraic integers give the stated coarse bound.}
\end{proof}

\begin{lemma}[Polynomial zero sets in boxes]\label{lem:polyzeros}
\red{Let $P\in K[X_1,\ldots,X_m]$ be nonzero.  Then
\[
 \#\{\mathbf x\in\OK^m:H_{\mathcal B}(x_i)\le N,\ P(\mathbf x)=0\}
 \ll_{K,P}N^{dm-1}.
\]}
\end{lemma}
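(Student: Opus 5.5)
The plan is to transfer the problem to integer lattice points in $\Z^{dm}$ and then apply the classical Schwartz--Zippel type bound for a nonzero polynomial in a cube. Write each coordinate $x_i=\sum_{j=1}^d a_{ij}\omega_j$ with $a_{ij}\in\Z$, so that $H_{\mathcal B}(x_i)\le N$ means $|a_{ij}|\le N$. Substituting into $P$ gives $P(\mathbf x)=\sum_{k=1}^d P_k(\mathbf a)\,\omega_k$, where after clearing a fixed denominator of the coefficients of $P$ we may assume $P_k\in\Q[\mathbf a]$ are polynomials in the $dm$ integer variables $a_{ij}$. The condition $P(\mathbf x)=0$ is equivalent to $P_k(\mathbf a)=0$ for all $k$. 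Hence the count is at most the number of $\mathbf a\in[-N,N]^{dm}\cap\Z^{dm}$ with $P_1(\mathbf a)=\cdots=P_d(\mathbf a)=0$.

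The key step is to check that some $P_k$ is a nonzero polynomial. Since $P$ is nonzero and $\OK$ is infinite, there is a point $\mathbf x\in\OK^m$ with $P(\mathbf x)\neq0$. Indeed, a nonzero polynomial over a field cannot vanish on $S^m$ for an infinite set $S$, and here we take $S=\Z\subset\OK$. For that point, the corresponding $\mathbf a$ has some $P_k(\mathbf a)\ne0$, so $P_k$ is not the zero polynomial. Alternatively one can use the norm form $\prod_\sigma P^\sigma$, but the coordinate decomposition is cleaner.

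Finally we apply the elementary bound: a nonzero polynomial $Q\in\Q[y_1,\ldots,y_M]$ of degree $D$ has at most $D(2N+1)^{M-1}$ zeros in $\{-N,\ldots,N\}^M$. This follows by induction on $M$, writing $Q$ as a polynomial in $y_M$ with leading coefficient a nonzero polynomial in the other variables. At points where that coefficient vanishes we use induction; at the remaining points there are at most $\deg_{y_M}Q$ roots in $y_M$. Taking $Q=P_k$ and $M=dm$ gives $\ll_{K,P}N^{dm-1}$, since $\deg P_k\le\deg P$.

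No step is a real obstacle. The only points needing care are that the implied constant depends on $P$ only through its degree and the denominator, which is harmless, and the nonvanishing of some $P_k$, which is handled above.
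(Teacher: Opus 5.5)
Your proposal is correct and follows the same route as the paper: you expand each $x_i$ in the integral basis, split $P(\mathbf x)$ along the $\Q$-basis $\omega_1,\ldots,\omega_d$ to get a nonzero polynomial in $dm$ integer variables, and then induct on the leading coefficient in the last variable, which gives the Schwartz--Zippel bound $\deg P\,(2N+1)^{dm-1}$. Your explicit argument that some component $P_k$ is nonzero, using a point of $\Z^m$ where $P$ does not vanish, supplies a step that the paper only asserts.
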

\begin{proof}
\red{Write every variable in integral-basis coordinates.  After clearing denominators and expanding in a $\Q$-basis of $K$, the equation $P=0$ implies the vanishing of at least one nonzero polynomial in $dm$ integer variables.  Induct on the number of variables: regard that polynomial as a polynomial in the last coordinate.  Outside the zero set of its leading coefficient there are at most $\deg P$ choices for the last coordinate; the exceptional set is handled by induction.}
\end{proof}

\begin{remark}
\red{Lemma~\ref{lem:polyzeros} gives a saving of one real lattice dimension, not a saving of $d$.  This distinction is the source of the $N^{dn-1}$ term for the discriminant-zero locus and explains why a hypersurface argument alone cannot produce the conjectural codimension-$d$ exceptional count.}
\end{remark}

\section{Additional resolvent details}
This appendix records algebraic facts used implicitly in the proof of
Theorem~\ref{thm:fixedG}.

Let $\mathbf T=(T_1,\ldots,T_s)$ and let
\[
 f_{\mathbf T}(X)=X^n+A_{n-1}(\mathbf T)X^{n-1}+\cdots+A_0(\mathbf T)
 \in K(\mathbf T)[X]
\]
be separable with generic Galois group $S_n$.  Let $G<S_n$ and choose a
polynomial $\Theta$ in the roots whose stabilizer is exactly $G$.

\begin{lemma}[Resolvent degree and irreducibility]\label{lem:genericres}
The orbit polynomial
\[
 \Phi_G(\mathbf T,Y)=\prod_{\sigma\in S_n/G}(Y-\sigma\Theta)
\]
is irreducible over $K(\mathbf T)$ and has degree $[S_n:G]$ in $Y$.
After multiplying $\Theta$ by a fixed integer if necessary, its coefficients
belong to $\mathcal O_K[\mathbf T]$.
\end{lemma}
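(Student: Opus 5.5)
The proof rests on one observation: $\Phi_G$ is the minimal polynomial of $\Theta$ over the field $K(\mathbf T)$, and integrality of its coefficients comes from symmetric-function theory. I will work in a splitting field $L$ of $f_{\mathbf T}$ over $K(\mathbf T)$, with roots $x_1,\ldots,x_n$. The hypothesis that the generic Galois group is $S_n$ identifies $\mathrm{Gal}(L/K(\mathbf T))$ with $S_n$ acting by permuting the $x_i$. Separability ensures that this action on $L$ is faithful. This is what lets us pass from statements about permutations of the roots to statements about field automorphisms.

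\emph{Step 1: degree and irreducibility.} Since $\Theta$ is a polynomial in the roots, it lies in $L$. By hypothesis its stabilizer in $S_n$ is exactly $G$. Note that this must be read as the stabilizer of the element $\Theta\in L$ under the Galois action, not of the abstract polynomial; I would make this explicit, or choose $\Theta$ generic enough that the two stabilizers coincide. Then the orbit $\{\sigma\Theta:\sigma\in S_n/G\}$ has exactly $[S_n:G]$ distinct elements, so $\Phi_G$ is well defined and has $Y$-degree $[S_n:G]$. Its coefficients are fixed by $S_n=\mathrm{Gal}(L/K(\mathbf T))$, hence lie in $K(\mathbf T)$. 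Finally, $\Phi_G$ is a product over a single Galois orbit with distinct roots, so it is the minimal polynomial of $\Theta$ over $K(\mathbf T)$ and is therefore irreducible.

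\emph{Step 2: integrality.} Take $\Theta\in\OK[x_1,\ldots,x_n]$. If $\Theta$ has coefficients in $K$, multiply it by a fixed integer to clear denominators; this does not change its stabilizer. Each coefficient of $\Phi_G$ is an elementary symmetric function in the elements $\sigma\Theta$. Each such function is a polynomial in $x_1,\ldots,x_n$ with $\OK$-coefficients, and it is invariant under all permutations of the variables, not merely under the Galois action. This is because an arbitrary permutation only permutes the cosets in the orbit.

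By the fundamental theorem of symmetric polynomials over the ring $\OK$, each such coefficient is a polynomial with $\OK$-coefficients in the elementary symmetric functions $e_k(x_1,\ldots,x_n)=(-1)^kA_{n-k}(\mathbf T)$. The remaining point is that the $A_i(\mathbf T)$ themselves lie in $\OK[\mathbf T]$. If they are only given in $K[\mathbf T]$, I would substitute $X\mapsto X/c$ and multiply by $c^n$, for a fixed integer $c$, to make the polynomial monic with integral coefficients. This rescales the roots by $c$ and rescales $\Theta$ by a power of $c$, which can be absorbed into the allowed integer multiple.

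\emph{Main obstacle.} The delicate step is the distinction between the stabilizer of $\Theta$ as a polynomial and its stabilizer as an element of $L$. If the specialization $A_i(\mathbf T)$ makes the roots satisfy algebraic relations, a permutation outside $G$ could fix the value of $\Theta$. I would argue that this cannot happen because the Galois group is the full group $S_n$. That identification makes $x_1,\ldots,x_n$ behave like algebraically independent variables for the purpose of the permutation action: the map $K[\mathbf T][x_1,\ldots,x_n]\to L$ is $S_n$-equivariant and injective on the orbit of a generic $\Theta$. If needed, I would replace $\Theta$ by $\Theta+\lambda\prod_i x_i^{i}$ for a suitable integer $\lambda$, so that its $S_n$-conjugates in $L$ are pairwise distinct outside the cosets of $G$.
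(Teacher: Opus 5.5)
Your overall route is the paper's. Orbit--stabilizer gives $[S_n:G]$ distinct conjugates, Galois invariance puts the coefficients in $K(\mathbf T)$, and transitivity makes $\Phi_G$ the minimal polynomial of $\Theta$; symmetric functions then give integrality. Step~1 is correct with the stabilizer read inside $L$, as you propose.

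The gap is in reconciling that reading with Step~2. Step~2 needs $\Theta$ to be $G$-invariant \emph{as a polynomial}. Only then is $\prod_{\sigma\in S_n/G}(Y-\sigma\Theta)$ a well-defined element of $\mathcal O_K[X_1,\ldots,X_n][Y]$ that is symmetric in the $X_i$.

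Your reason for identifying the two stabilizers is false. You claim that full Galois group $S_n$ makes the roots behave like independent variables. But for $s<n$ the roots always satisfy algebraic relations over $K$. For instance, $X^n-X-T$ has Galois group $S_n$ over $K(T)$, yet for $n\ge3$ its roots satisfy $x_1+\cdots+x_n=0$.
\begin{itemize}
\item $\Theta=(X_1+\cdots+X_n)X_1$ has polynomial stabilizer $\operatorname{Stab}(1)$ but value $0$ in $L$. So a permutation outside $G=\operatorname{Stab}(1)$ does fix its value, which is exactly the scenario you declared impossible. The orbit collapses and the product is $Y^n$.
\item $\Theta=X_1+(X_1+\cdots+X_n)X_2$ has stabilizer exactly $\operatorname{Stab}(1)$ in $L$, but polynomial stabilizer only $\operatorname{Stab}(1)\cap\operatorname{Stab}(2)$. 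Your polynomial-level product then depends on the coset representatives and is not symmetric.
\end{itemize}
Your fallback $\Theta+\lambda\prod_i x_i^i$ also fails. The monomial $\prod_i x_i^i$ has trivial stabilizer, so adding it destroys $G$-invariance instead of isolating $G$.

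The clean repair avoids polynomial-level symmetry altogether. Take $\Theta\in\mathcal O_K[X_1,\ldots,X_n]$ of degree $\delta$, and let $y_i=cx_i$ be the roots of the monic polynomial $c^nf_{\mathbf T}(X/c)\in\mathcal O_K[\mathbf T][X]$. Then $c^\delta\Theta(x)=c^\delta\Theta(y/c)$ is an $\mathcal O_K$-polynomial in the $y_i$. (This, rather than ``$\Theta$ is rescaled by a power of $c$'', is the correct statement when $\Theta$ is not homogeneous.) Hence every Galois conjugate of $c^\delta\Theta$ is integral over $\mathcal O_K[\mathbf T]$, and so are the coefficients of the corresponding $\Phi_G$. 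By Step~1 they lie in $K(\mathbf T)$. Since $\mathcal O_K[\mathbf T]$ is integrally closed, they lie in $\mathcal O_K[\mathbf T]$. This uses only the stabilizer in $L$, so your ``main obstacle'' disappears.

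Alternatively, state explicitly what the paper's short proof tacitly assumes: $\Theta$ is $G$-invariant as a polynomial and has stabilizer exactly $G$ in $L$. This holds automatically in the application $\mathbf T=\mathbf A$ with $\Theta\in\mathbb Z[X_1,\ldots,X_n]$, because the generic roots are algebraically independent over $K$. Under that assumption your symmetric-function argument goes through as written.
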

\begin{proof}
The coefficients are symmetric functions of the roots and therefore belong
to $K[\mathbf T]$; the standard integral construction of the generic
resolvent clears the finitely many denominators once and for all.  The
$S_n$-orbit of $\Theta$ is naturally identified with $S_n/G$.  Since the
stabilizer of $\Theta$ is exactly $G$, the orbit has cardinality
$[S_n:G]$.  The Galois group $S_n$ acts transitively on that orbit, hence the
orbit polynomial is the minimal polynomial of $\Theta$ over
$K(\mathbf T)$.
\end{proof}

\begin{lemma}[Specialized roots]\label{lem:specialroot}
Let $\mathbf t\in\mathcal O_K^s$ be a specialization for which the original
polynomial is separable.  If its Galois group is contained in a conjugate of
$G$, then $\Phi_G(\mathbf t,Y)$ has a root in $\mathcal O_K$.
\end{lemma}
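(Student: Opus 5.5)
The plan is to argue through Galois theory on the specialized splitting field, following the proof of Proposition~\ref{prop:resolvent}. Write $\theta_1,\ldots,\theta_n$ for the roots of $f_{\mathbf t}(X)$ in a splitting field $L/K$. They are distinct because $f_{\mathbf t}$ is separable. First I will fix a labelling of these roots that is compatible with the generic roots $\Theta_1,\ldots,\Theta_n$ of $f_{\mathbf T}$. Concretely, $\Phi_G(\mathbf T,Y)$ is a polynomial in $\mathbf T$ and $Y$ whose coefficients are symmetric polynomial expressions in the $\Theta_i$. Specializing $\mathbf T\mapsto\mathbf t$ therefore gives
\[
 \Phi_G(\mathbf t,Y)=\prod_{\sigma\in S_n/G}\bigl(Y-(\sigma\Theta)(\theta_1,\ldots,\theta_n)\bigr),
\]
because an identity between symmetric functions of the roots and polynomials in the elementary symmetric functions specializes term by term. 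Here $\Theta$ is regarded as a polynomial with integral coefficients in $n$ letters, and $\sigma\Theta$ denotes the same polynomial with its letters permuted.

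Next, identify $G_{f_{\mathbf t}}$ with a subgroup of $S_n$ through its action on the indices of the $\theta_i$. By hypothesis $G_{f_{\mathbf t}}\subseteq gGg^{-1}$ for some $g\in S_n$. Set $\beta=(g\Theta)(\theta_1,\ldots,\theta_n)$. Each $\tau\in G_{f_{\mathbf t}}$ acts on $\beta$ as the permutation $\tau$ of the letters, so $\tau\beta=(\tau g\Theta)(\boldsymbol\theta)$. Since $g^{-1}\tau g\in G=\mathrm{Stab}(\Theta)$, we get $\tau g\Theta=g\Theta$ as polynomials, and hence $\tau\beta=\beta$. Thus $\beta$ is fixed by the whole Galois group, so $\beta\in K$. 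The displayed factorization shows that $\beta$ is a root of $\Phi_G(\mathbf t,Y)$, because $gG$ is one of the cosets in the product.

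Finally comes integrality. By Lemma~\ref{lem:genericres}, after the fixed integral normalization of $\Theta$, $\Phi_G(\mathbf t,Y)$ is monic in $Y$ with coefficients in $\mathcal O_K$, since $\mathbf t\in\mathcal O_K^s$. So $\beta$ is integral over $\mathcal O_K$, and because $\mathcal O_K$ is integrally closed, $\beta\in\mathcal O_K$. One could also argue directly: the $\theta_i$ are algebraic integers when $f_{\mathbf t}$ is monic with integral coefficients, and $\Theta$ has integer coefficients, so $\beta$ is an algebraic integer lying in $K$.

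The step I expect to be the main obstacle is the compatibility of the specialization with the orbit product. The conjugates $\sigma\Theta$ must be read as formal polynomial substitutions and not as field automorphisms of the specialized splitting field. The reason is that after specialization the $(\sigma\Theta)(\boldsymbol\theta)$ may coincide for different cosets, so the specialized resolvent may acquire repeated roots. The argument above still goes through in that case, because it uses only the formal polynomial identity and never needs the specialized values to be distinct.
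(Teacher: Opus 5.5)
Your proof is correct and follows the same route as the paper. The paper conjugates $G$ so that the specialized Galois group fixes the coset of $\Theta$, deduces that the specialized value of $\Theta$ lies in $K$, and concludes because it is a root of the monic $\Phi_G(\mathbf t,Y)\in\mathcal O_K[Y]$ and $\mathcal O_K$ is integrally closed; you additionally spell out the term-by-term specialization of the orbit product and the identity $\tau g\Theta=g\Theta$ for $\tau\in gGg^{-1}$, and your remark that repeated roots after specialization are harmless is accurate.
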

\begin{proof}
After conjugating $G$ we may suppose that the specialized Galois group fixes
the coset corresponding to $\Theta$.  Hence the specialized value of
$\Theta$ lies in $K$.  It is a root of the monic polynomial
$\Phi_G(\mathbf t,Y)\in\mathcal O_K[Y]$, so it is integral over
$\mathcal O_K$.  Since $\mathcal O_K$ is integrally closed, it belongs to
$\mathcal O_K$.
\end{proof}

\begin{remark}
No converse is needed: the existence of a resolvent root is used only as a
necessary condition for the subgroup containment, which is sufficient for an
upper bound.
\end{remark}

\section{Summary of the main contributions}
The main results established in the paper are the following.
\begin{enumerate}
\item Theorem~\ref{thm:lopcurve}: a uniform integral-point estimate over a
      fixed number field for plane curves in boxes with arbitrary, possibly
      very different, coordinate bounds.
\item Theorem~\ref{thm:lopsurface}: a global determinant-method covering
      theorem for absolutely irreducible affine surfaces in such boxes,
      including an explicit exceptional-set bound.
\item The coefficient-box large-sieve estimate over a fixed number field.
\item The unconditional subgroup-sensitive estimate of
      Theorem~\ref{thm:fixedG}, obtained by combining a generic integral
      resolvent with effective Hilbert irreducibility over global fields.
\item The exceptional hierarchy of Corollary~\ref{cor:hierarchy}, which
      separates the $A_n$ obstruction from all smaller subgroups.
\item The reducible-polynomial bound and the congruence and rectangular-box
      variants.
\end{enumerate}
The determinant estimates and the large-sieve/Hilbert-irreducibility estimates
serve different purposes.  The former retain the geometry and the separate
coordinate ranges of a chosen resolvent, whereas the latter give robust
uniform bounds for broad unions of exceptional Galois groups.

\appendix
\section{Further details on the weighted global determinant argument}\label{app:weightedglobal}

The purpose of this appendix is to spell out the points in the proof of
Proposition~\ref{prop:globalcover} at which a passage from $\Q$ to $K$ or from
a box with equal side lengths to one with different side lengths occurs.  This also fixes the normalization
of heights and ideal norms used in Section~\ref{sec:determinant}.

\subsection{Integral models and good prime ideals}

After multiplying $F$ by a nonzero element of $K$, we may and do suppose that
its coefficients generate the unit ideal of $\OK$.  Absolute irreducibility is
an open condition in the coefficient space.  Hence, by the standard Noether-form criterion used in uniform determinant arguments\cite{ParedesSasykPoints}, there is a nonzero Noether
form $\mathcal N(F)\in\OK$ with the property that, for every prime ideal
$\mathfrak p\nmid\mathcal N(F)$ outside a finite set depending only on $K$ and
$D$, the reduction $F\bmod\mathfrak p$ is geometrically integral.  The product
of the norms of the exceptional prime ideals is polynomially bounded in the
multiplicative height of the coefficient vector.  By
Lemma~\ref{lem:coeffdichotomy}, either all points already lie on a second
bounded-degree hypersurface, or this coefficient height is $T^{O_{K,D}(1)}$.
Consequently, deleting all bad prime ideals changes any dyadic prime-ideal sum
by at most a factor $T^\varepsilon$.

For a geometrically integral surface $X_\mathfrak p/\kappa(\mathfrak p)$ of
bounded degree, the Lang--Weil estimate\cite{LangWeil} gives
\[
 \#X_\mathfrak p(\kappa(\mathfrak p))
 =(N\mathfrak p)^2+O_D((N\mathfrak p)^{3/2}),
\]
and the singular locus has $O_D(N\mathfrak p)$ points.  We use only these
coarse consequences; the implied constants are uniform in the coefficients.

\subsection{The interpolation module}

Fix a weight vector $\mathbf w=(w_1,w_2,w_3)$ and a weight-refining monomial
order.  Let $\operatorname{in}(F)$ be the initial monomial selected in
Section~\ref{sec:determinant}.  For $A>0$, denote by $M(A)$ the free
$\OK$-module generated by monomials of weight at most $A$ not divisible by
$\operatorname{in}(F)$.  Reduction modulo $(F)$ gives an injection
\[
 M(A)\hookrightarrow K[X_1,X_2,X_3]/(F).
\]
Thus any nontrivial $K$-linear relation among the evaluations of the basis of
$M(A)$ yields an auxiliary polynomial that is not divisible by $F$.

Let $R=\operatorname{rank}_{\OK}M(A)$.  Lemma~\ref{lem:weightedsurface}
gives
\begin{equation}\label{eq:appR}
 R=\frac{tA^2}{2w_1w_2w_3}+O_{D,\mathbf w}(A),
\end{equation}
and, writing $\lambda(M)$ for the weight of a monomial,
\begin{equation}\label{eq:appS}
 \sum_{M\in M(A)}\lambda(M)
 =\frac{tA^3}{3w_1w_2w_3}+O_{D,\mathbf w}(A^2).
\end{equation}
The perturbation device of Browning--Heath-Brown is used when two normalized
weights approach one another: replace $B_i$ by
$B_iT^{\delta_i}$ with $0<\delta_1<\delta_2<\delta_3\ll_D\varepsilon$.
Lemma~\ref{lem:dyadicstability} shows that this enlarges the final estimate by
$T^{O_D(\varepsilon)}$, while the perturbed boundary slopes are separated.
The constants in the lattice-point errors in \eqref{eq:appR}--\eqref{eq:appS}
are then uniform.  Letting the perturbations tend to zero proves the original
box estimate.  This is the same regularization used for boxes with different side lengths in
\cite[Section~2]{BrowningHB}.

\subsection{Archimedean upper bound}

Choose $R$ integral points $P_1,\ldots,P_R$ in the box and enumerate a basis
$M_1,\ldots,M_R$ of $M(A)$ by increasing weight.  Put
\[
 \Delta=\det(M_j(P_i))_{1\le i,j\le R}\in\OK.
\]
For every embedding $\sigma:K\hookrightarrow\C$, Hadamard gives
\[
 |\sigma(\Delta)|
 \le R^{R/2}\prod_{j=1}^R
 \exp(\lambda(M_j)).
\]
Multiplication over the embeddings yields
\begin{equation}\label{eq:appArch}
 \log |N_{K/\Q}(\Delta)|
 \le d\sum_{j=1}^R\lambda(M_j)+\frac{dR}{2}\log R.
\end{equation}
This identity explains the factor $d$ in both main theorems.  There is no
additional discriminant factor: the determinant itself is an algebraic integer,
and the field norm converts the product of the archimedean estimates into an
ordinary integer.

\subsection{Finite-place divisibility}

Let $\mathfrak p$ be a good prime ideal and let $\bar P$ be a smooth point of
$X_\mathfrak p$.  For points $P_i$ reducing to $\bar P$, the completed local
ring is
\[
 \widehat{\mathcal O}_{X,P}\cong
 \widehat{\mathcal O}_{K,\mathfrak p}[[u,v]].
\]
Order the monomials $u^av^b$ by total degree.  Gaussian elimination in the
local ring shows that the interpolation determinant is divisible by
\[
 \mathfrak p^{e(R)},\qquad
 e(R)=\sum_{j=0}^{R-1}\nu_j,
\]
where $\nu_j$ is the total degree of the $j$th monomial in two variables.
Since the number of monomials of total degree at most $s$ is
$\binom{s+2}{2}$,
\begin{equation}\label{eq:eRsurface}
 e(R)=\frac{2^{3/2}}{3}R^{3/2}+O(R).
\end{equation}
For several prime ideals, the corresponding ideal divisibilities multiply
because the ideals are coprime.

Suppose therefore that the chosen residue conditions force
$\mathfrak D\mid(\Delta)$.  If $\Delta\ne0$, then
\begin{equation}\label{eq:normsqueeze}
 \log N\mathfrak D
 \le \log |N_{K/\Q}(\Delta)|.
\end{equation}
The determinant method chooses the prime ideals so that the left-hand side,
using \eqref{eq:eRsurface}, is larger than the right-hand side in
\eqref{eq:appArch}.  Hence $\Delta=0$.

\subsection{Choice of prime ideals and the first stopping threshold}

Let
\[
 W=\left(\frac{w_1w_2w_3}{t}\right)^{1/2}.
\]
By \eqref{eq:sumlambda-surface} and \eqref{eq:appArch}, the leading
archimedean cost per $R^{3/2}$ is
\[
 d\frac{2^{3/2}}3W.
\]
At a smooth residue class, \eqref{eq:eRsurface} supplies the same universal
coefficient $2^{3/2}/3$ multiplied by $\log N\mathfrak p$.  Thus the natural
stopping condition is
\[
 \sum_{\mathfrak p\in\mathcal P}\log N\mathfrak p>dW+O_{K,D,\varepsilon}(1).
\]
The prime ideal theorem\cite[Chapter~VII, \S5]{Neukirch} permits the required collection of good prime ideals
with total norm
\[
 \prod_{\mathfrak p\in\mathcal P}N\mathfrak p
 \ll_{K,D,\varepsilon}T^\varepsilon e^{dW}.
\]
The $T^\varepsilon$ factor absorbs the deleted bad prime ideals and the small
perturbations of the side lengths.

At this threshold all $R\times R$ interpolation minors associated with a fixed
admissible residue pattern vanish.  Hence the points in that pattern lie on a
bounded-degree auxiliary hypersurface.  The global determinant construction
chooses the residue patterns hierarchically, rather than taking all residue
classes modulo a single large prime.  The number of terminal patterns is at
most
\[
 O_{K,D,\varepsilon}(T^\varepsilon e^{dW}),
\]
which is the estimate for $J$.

\subsection{The exceptional set}

There are two ways in which a point can fail to enter a terminal smooth
pattern: its reductions can repeatedly lie in singular loci, or the local
interpolation rank can drop before an auxiliary equation independent of $F$
is obtained.  In Salberger's construction\cite{Salberger} these points are subjected to a
second independent prime-ideal selection.  The same determinant squeeze
\eqref{eq:normsqueeze} then applies a second time.  Consequently the product of
the two stopping norms is
\[
 \ll_{K,D,\varepsilon}T^\varepsilon e^{2dW}.
\]
The singular loci have codimension at least one in each reduction and therefore
do not increase this bound.  Summing over the second-stage terminal residue
patterns gives
\[
 |Z|\ll_{K,D,\varepsilon}T^\varepsilon e^{2dW}.
\]
This is the asserted $T^\varepsilon V_{3,K}^2$ exceptional-set estimate.

The preceding six steps are exactly the places where the proof of
\cite[Lemma~1]{Browning} uses the rational ground field.  Replacing rational
primes by prime ideals and the absolute value of a determinant by its field
norm is justified by Lemmas~\ref{lem:normdet} and~\ref{lem:localdet}; the
remaining geometric and interpolation steps are unchanged.  This completes
the detailed justification of Proposition~\ref{prop:globalcover}.

\end{document}